\definecolor{darkgreen}{rgb}{0,0.5,0}
\definecolor{darkgreen}{rgb}{0,0.5,0}
\newtheorem{theorem}{Theorem}[section]
\newtheorem*{theorem*}{Theorem}
\newtheorem*{conjecture*}{Conjecture}
\newtheorem{proposition}[theorem]{Proposition}
\newtheorem{definition}[theorem]{Definition}
\newtheorem{notation}[theorem]{Notations}
\newtheorem{corollary}[theorem]{Corollary}
\newtheorem{lemma}[theorem]{Lemma}
\newtheorem{question}[theorem]{Question}
\newtheorem{remark}[theorem]{Remark}
\newtheorem{conjecture}[theorem]{Conjecture}
\newtheorem{example}[theorem]{Example}
\DeclareMathOperator{\Img}{Im}
\DeclareMathOperator{\Supp}{Supp}
\DeclareMathOperator{\Sing}{Sing}
\DeclareMathOperator{\Hom}{Hom}
\DeclareMathOperator{\Pic}{Pic}
\DeclareMathOperator{\supp}{Supp}
\DeclareMathOperator{\Tors}{Tors}
\newcommand{\OO}{\mathcal{O}}
\newcommand{\w}{{\underline w}}
\DeclareMathOperator{\wdeg}{deg_{\underline{w}}}
\DeclareMathOperator{\wrank}{rk_{\underline{w}}}
\newcommand{\cE}{\mathcal{E}}
\newcommand{\cV}{\mathcal{V}}
\newcommand{\cM}{\mathcal{M}}
\newcommand{\cP}{\mathcal{P}}
\numberwithin{equation}{section}
\begin{document}

\title[Nodal curves and good polarizations]{Nodal curves and polarizations with good properties}

\author{Sonia Brivio}
\address{Dipartimento di Matematica e Applicazioni,
	Universit\`a degli Studi di Milano-Bicocca,
	Via Roberto Cozzi, 55,
	I-20125 Milano, Italy}
\email{sonia.brivio@unimib.it}

\author{Filippo F. Favale}
\address{Dipartimento di Matematica e Applicazioni,
	Universit\`a degli Studi di Milano-Bicocca,
	Via Roberto Cozzi, 55,
	I-20125 Milano, Italy}
\email{filippo.favale@unimib.it}

\date{\today}
\thanks{
\textit{2010 Mathematics Subject Classification}:  Primary:  14H60; Secondary: 14F06,14D20\\
\textit{Keywords}:  Polarizations, Stability, Nodal curves, Moduli spaces \\
Both authors are partially supported by INdAM - GNSAGA.\\
}
 \maketitle
 
\begin{abstract}
In this paper we deal with polarizations on a nodal curve $C$ with smooth components. Our aim is to study and characterize a class of polarizations, which we call "good", for which depth one sheaves on $C$ reflect some properties that hold for vector bundles on smooth curves. We will concentrate, in particular, on the relation between the $\w$-stability of $\OO_C$ and the goodness of $\w$. We prove that these two concepts agree when $C$ is of compact type and we conjecture that the same should hold for all nodal curves.
\end{abstract}

\section*{Introduction}
Let $C$ be a projective curve over the complex field. One of the most interesting problems in Algebraic Geometry is the construction of moduli spaces parametrizing 
line bundles or in general vector bundles on $C$. These moduli spaces have been studied first by Mumford (\cite{M}) and Le Potier (\cite{LP}) in the smooth case. These spaces are interesting by themselves as higher dimensional varieties but also for important related constructions: just to mention some, one can consider higher-rank  Brill-Noether theory, Theta divisors and Theta functions and the moduli spaces of coherent systems. For surveys on these topics see, for example, \cite{Bra}, \cite{BGMN} and \cite{Bea}; for some results by the authors see \cite{BF1}, \cite{BF4}, \cite{Bri1}, \cite{Bri2}, \cite{BB12}  and \cite{BV}.
When the curve is singular, these spaces are not in general complete. It is natural to study their possible compactifications and this has driven the attention of many authors since the '60s, who addressed the problem with different approaches (see, for instance, \cite{Ses}, \cite{OS}, \cite{KN}, \cite{Bho92},\cite{Gie} and \cite{GM}). When $C$ is a reducible nodal curve, that is it has only ordinary double points, we have more explicit results. In several of the constructions mentioned above, the objects of these compact moduli spaces are equivalence classes of depth one sheaves (i.e. torsion free) on the curve that are semistable with respect to a polarization (see \cite{Tei91} and \cite{Tei95}). 
\\

A polarization $\w$ on $C$ is given by rational weights on each irreducible component of $C$ adding up to $1$ or, equivalently, by an ample line bundle $L$ on $C$ (see \cite{Ses} and \cite{KN}).
Once a polarization on the curve is fixed, the notions of degree and rank can be generalized to the notions of $\w$-degree and $\w$-rank which are also defined for depth one sheaves. With these data Seshadri introduced the notion of $\w$-stability (or $\w$-semistability) for depth one sheaves allowing the construction of moduli spaces of such objects.
\\

In this paper we are interested in studying polarizations on nodal reducible curves having nice properties, i.e. which allow us to generalize to nodal curves some natural properties of vector bundles on smooth curves and to simplify the study of stability of vector bundles and coherent systems on nodal reducible curves. As motivation, consider the following facts. On a smooth curve $C$, the sheaf $\OO_C$ is stable (as all line bundles) and any globally generated vector bundle has non-negative degree. This is not true anymore on reducible nodal curves. Moreover, in order to construct vector bundles on a reducible nodal curve, one can glue vector bundles on its components. In general, though, it is not true that glueing stable vector bundles yields a $\w$-stable sheaf: additional conditions on the polarization  and on the degree of the restrictions are needed (see \cite{Tei}, \cite{BF2} and \cite{BF3}).
\\

This motivates our definition of a good polarization. Let $C$ be a nodal curve with smooth irreducible components. For any depth one sheaf $E$ on $C$, we denote by $E_i$ the restriction (modulo torsion) of $E$ to the component $C_i$. Note that if $E$ is locally free, then the degree of $E$ is actually the sum of the degrees of its restrictions $E_i$, but 
this is not true in general. 
We will say that $\w$ is a {\it good polarization}  if  for any depth one sheaf $E$ the difference $\Delta_{\w}(E)$ of the $\w$-degree of $E$ and the sum of degrees of its restrictions $E_i$ is non negative and it is zero if and only if $E$ is locally free (see Definition \ref{GOOD}). As anticipated,  the first result of this paper is the following: 
\hfill\par
\begin{theorem*}[Theorem \ref{StableOC}]
\label{TH:A}
Let $C$ be a nodal curve and let $\w$ be a good polarization on it. Let $E$ be a depth one sheaf on $C$. Then we have the following properties:
\begin{enumerate}[(a)]
\item{} Assume that $E$ is locally free and, for $i=1,\dots,\gamma$, $E_i$ is stable with $\deg(E_i)= 0$.  Then $E$ is $\w$-stable.  
\item{} If $E$ is globally generated, then $\wdeg(E) \geq 0$. 
\item{} If $E$ is $\w$-semistable and $\wdeg(E)>0$,  then $h^0(E^*)=0$. 
\end{enumerate}
In particular, if $E=\OO_C$ or, more generally, if $E$ is a line bundle whose restrictions have degree $0$, then $E$ is $\w$-stable.
\end{theorem*}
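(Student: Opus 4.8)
The plan is to run everything off the defining inequality of a good polarization: for every depth one sheaf $E$ one has $\wdeg(E) = \sum_i \deg(E_i) + \Delta_{\w}(E)$ with $\Delta_{\w}(E) \ge 0$, and equality exactly when $E$ is locally free. A useful normalization is that in parts (a) and in the final clause $E$ is locally free with all $\deg(E_i)=0$, so $\Delta_{\w}(E)=0$ and hence $\mu_{\w}(E)=0$; stability will therefore reduce to controlling the \emph{sign} of $\w$-degrees of quotients.

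For part (a) I would test $\w$-stability on torsion-free quotients. Let $Q=E/F$ be a nonzero proper depth one quotient (it suffices to consider saturated $F$); the goal is $\wdeg(Q)>0$. Restricting the surjection $E \to Q$ to each component and killing torsion gives a surjection $E_i \twoheadrightarrow Q_i$ of bundles on the smooth curve $C_i$; since $E_i$ is stable of slope $0$, any nonzero quotient has slope $\ge 0$, so $\deg(Q_i)\ge 0$, with equality forcing $Q_i=0$ or $Q_i\cong E_i$. Together with $\Delta_{\w}(Q)\ge 0$ this yields $\wdeg(Q)=\sum_i \deg(Q_i)+\Delta_{\w}(Q)\ge 0$. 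The crux is ruling out equality, and this is the step I expect to be the main obstacle. If $\wdeg(Q)=0$, then $Q$ is locally free and each $Q_i$ is either $0$ or all of $E_i$, so the components split into those where $Q$ has the full rank of $E$ and those where $Q$ vanishes. Here I would invoke that a locally free sheaf assigns equal ranks to the two branches at each node, so the rank of $Q$ is constant along the connected curve $C$; hence one of the two sets of components is empty, i.e. $Q=0$ or $Q=E$, contradicting properness. This gives $\wdeg(Q)>0$ and thus $\w$-stability.

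Part (b) is then immediate: if $E$ is globally generated, so is each restriction $E_i$, being a quotient of a globally generated sheaf, and a globally generated bundle on a smooth curve has $\deg(E_i)\ge 0$ (for instance via its globally generated determinant); summing and adding $\Delta_{\w}(E)\ge 0$ gives $\wdeg(E)\ge 0$. The final clause is a direct corollary of (a): a line bundle is locally free of rank one and restricts to line bundles $E_i$, which are automatically stable, so whenever all $\deg(E_i)$ vanish — in particular for $E=\OO_C$, where $E_i=\OO_{C_i}$ — part (a) applies and $E$ is $\w$-stable.

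Finally, for part (c) I would argue by contradiction using the $\w$-stability of $\OO_C$ just established. A nonzero section of $E^*$ is a nonzero map $\phi\colon E\to \OO_C$, whose image $I$ is simultaneously a quotient of $E$ and a subsheaf of $\OO_C$. Since $E$ is $\w$-semistable with $\mu_{\w}(E)>0$, the quotient $I$ satisfies $\mu_{\w}(I)\ge \mu_{\w}(E)>0$; since $\OO_C$ is $\w$-stable with $\mu_{\w}(\OO_C)=0$, the nonzero subsheaf $I$ satisfies $\mu_{\w}(I)\le 0$. This contradiction forces $\Hom(E,\OO_C)=0$, that is $h^0(E^*)=0$.
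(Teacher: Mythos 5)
Your proof is correct, and for parts (a), (c) and the final clause it follows the same skeleton as the paper's: reduce $\w$-stability to showing $\wdeg(Q)>0$ for depth one quotients $Q$ of $E$, get $\deg(Q_i)\ge 0$ from stability of the restrictions $E_i$, add $\Delta_{\w}(Q)\ge 0$, and in (c) squeeze the image of $\phi$ between the $\w$-semistability of $E$ and the $\w$-stability of $\OO_C$ (your normalization $\mu_{\w}(\OO_C)=0$ differs from the paper's $\mu_{\w}(\OO_C)=\chi(\OO_C)$ by a harmless additive constant). Two points are genuinely different. First, the boundary case of (a): the paper asserts that $\sum_i\deg(Q_i')>0$ for any proper depth one quotient $Q'$, which overlooks quotients supported on a proper subcurve $B$ with $Q_i'\cong E_i$ for $C_i\subseteq B$ and $Q_i'=0$ elsewhere (e.g.\ $Q'=\OO_B$ when $E=\OO_C$), where that sum vanishes; your argument covers exactly this case, via the strictness clause of goodness ($\Delta_{\w}(Q)=0$ only for locally free $Q$) combined with the fact that a locally free sheaf has constant rank on the connected curve $C$. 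This is in effect the patch the paper applies only in its separate remark on line bundles, where it notes $\wdeg(L|_B)=\Delta_{\w}(L|_B)>0$ because $L|_B$ is not locally free on $C$; so your treatment of (a) is the more complete one. Second, part (b): the paper deduces it from (a) by writing $E$ as a quotient of $V\otimes\OO_C$ and using that a direct sum of copies of the $\w$-stable sheaf $\OO_C$ is $\w$-semistable, whereas you observe that each $E_i$ is globally generated on the smooth curve $C_i$, hence $\deg(E_i)\ge 0$, and add $\Delta_{\w}(E)\ge 0$. Your route is more elementary (it needs neither (a) nor the semistability of $\OO_C^{\oplus k}$) and uses only the inequality half of goodness; the paper's route has the merit of showing that (b) already holds whenever $\OO_C$ is $\w$-stable, a formally weaker hypothesis that is relevant to the paper's conjecture that $\w$-stability of $\OO_C$ is equivalent to goodness.
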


We will show that good polarizations exist on any stable nodal curve with $p_a(C) \geq 2$ (see Proposition \ref{polarizationeta} and Corollary \ref{COR:familyw}). For nodal curves with $p_a(C)\leq 1$ we are able to characterize exactly which curves admit a good polarization (see Corollary \ref{COR:pa01}).
\\

The second result of this paper provides sufficient conditions in order to obtain a good polarization on a nodal curve. The method relies on the choice of particular paths on the dual graph $\Gamma_C$ of $C$ which yields a finite collection of subcurves $A_j$ of $C$. This allows us to get a rather technical description of $\Delta_{\w}(E)$, for any depth one sheaf $E$ on $C$,  and  to obtain the mentioned sufficient conditions. These are stated in Theorem \ref{THM:DELTAGENERAL}. More precisely, consider, for each non-empty subcurve $A_j$, the condition
\begin{equation*}
(\star\star)_{A_j}:\qquad \qquad \frac{1}{2}(\delta_{A_j}-1) < \Delta_{\w}(\OO_{A_j}) < \frac{1}{2}(\delta_{A_j}+1)
\end{equation*}
where $\delta_{A_j}$ is the number of the nodes of $C$ lying on $A_j$ which are not nodes for the subcurve $A_j$ (see Section \ref{SEC:NOTATIONS} for details). Then we have the following: 
\begin{theorem*}[Theorem \ref{THM:DELTAGENERAL}]
Let $(C,\w)$ be a polarized nodal curve. If conditions $(\star\star)_{A_j}$ hold for all non empty $A_j$, then  $\w$ is a good polarization. 
\end{theorem*}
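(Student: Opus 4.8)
The plan is to reduce the goodness of $\w$ (Definition \ref{GOOD}) to a single positivity statement, namely $\Delta_{\w}(\OO_A) > 0$ for every connected proper nonempty subcurve $A$, and then to extract that positivity from the hypotheses $(\star\star)_{A_j}$. First I would set up the Riemann--Roch bookkeeping. For a depth one sheaf $E$ let $r_i$ be its generic rank on $C_i$ and, at a node $n$, let $a_n(E)$ be the number of locally free (that is, $\OO$-type) summands in the local decomposition of $E$. Computing the Euler characteristic from the sequence $0 \to E \to \bigoplus_i (j_i)_* E_i \to Q \to 0$, whose cokernel $Q$ has length $a_n(E)$ at $n$, yields
\begin{equation*}
\Delta_{\w}(E) = \sum_i r_i\,\chi(\OO_{C_i}) - \wrank(E)\,\chi(\OO_C) - \sum_n a_n(E),
\end{equation*}
where the first two terms are \emph{linear} in the rank vector $(r_i)$. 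Specializing to $E = \OO_A$ gives $\Delta_{\w}(\OO_A) = (1 - p_a(A)) - w_A(1 - p_a(C))$, with $w_A$ the total weight of $A$; in particular $\Delta_{\w}(\OO_{-})$ is additive over the connected components of a subcurve.

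The decisive step is a rank filtration. Given $E$ with rank vector $(r_i)$, put $A_k := \bigcup_{r_i \ge k} C_i$, so that $(r_i) = \sum_{k \ge 1} \mathbf{1}_{A_k}$. Using linearity together with the identity $\sum_k e(A_k) = \sum_n \min(r^n_x, r^n_y)$, where $e(A_k)$ is the number of nodes internal to $A_k$ and $r^n_x, r^n_y$ are the two branch-ranks of $E$ at $n$, the displayed formula reorganizes as
\begin{equation*}
\Delta_{\w}(E) = \sum_{k \ge 1} \Delta_{\w}(\OO_{A_k}) + \sum_n \big( \min(r^n_x, r^n_y) - a_n(E) \big).
\end{equation*}
Every term of the second sum is non-negative, since $a_n(E) \le \min(r^n_x, r^n_y)$, and the whole sum vanishes exactly when $E$ is maximally glued at each node. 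Hence, once $\Delta_{\w}(\OO_A) > 0$ for all connected proper nonempty $A$ (whence $\ge 0$ for every proper $A$, by additivity over components), non-negativity of $\Delta_{\w}(E)$ is immediate; and if $\Delta_{\w}(E) = 0$ then both sums vanish, forcing every $A_k$ to be $\emptyset$ or $C$ (constant rank) and $E$ to be maximally glued, i.e. locally free. The converse being clear, this is exactly the goodness of $\w$.

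To obtain the required positivity I would use the complement identity $\Delta_{\w}(\OO_A) + \Delta_{\w}(\OO_B) = \delta_A$ for $B = \overline{C \setminus A}$: both sides count the nodes at which the rank one sheaf $\OO_A \oplus \OO_B$ fails to be locally free. Consequently $(\star\star)_A$ is the symmetric condition $|\Delta_{\w}(\OO_A) - \tfrac12 \delta_A| < \tfrac12$, and each $(\star\star)_{A_j}$ already gives $\Delta_{\w}(\OO_{A_j}) > \tfrac12(\delta_{A_j} - 1) \ge 0$, strictly positive because $\delta_{A_j} \ge 1$ for a proper subcurve. The role of the path construction on $\Gamma_C$ is then to propagate this from the chosen finite family to \emph{every} connected proper subcurve $A$: one writes $\Delta_{\w}(\OO_A)$ as a telescoping combination of the quantities $\Delta_{\w}(\OO_{A_j})$ attached to the edges crossed by a suitable path, the half-integer thresholds $\tfrac12(\delta_{A_j} \pm 1)$ combining so that the accumulated error never reaches the bound that would allow $\Delta_{\w}(\OO_A) \le 0$.

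I expect this propagation to be the main obstacle. Converting $(\star\star)$ on the finite family $\{A_j\}$ into strict positivity for \emph{all} connected proper subcurves demands both a careful choice of the paths in $\Gamma_C$ and an exact accounting of which nodes are counted in each $\delta_{A_j}$; the delicate case is that of non-bridge nodes, where removing a single edge does not disconnect $\Gamma_C$, so that the ``side'' of a node is not canonically defined and the paths must be organized coherently around the cycles of $\Gamma_C$. A secondary point is the equality analysis: one must verify that it is the \emph{strictness} of the open interval in $(\star\star)$, not mere non-negativity, that makes each contribution vanish only when the corresponding rank jump does, thereby pinning down the ``zero if and only if locally free'' clause.
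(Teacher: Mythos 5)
Your proposal splits into two halves with very different status. The first half, the rank-filtration identity, is correct, and it is a genuinely different mechanism from the paper's. Writing $s_n$ for the free rank of $E$ at a node $n$ (your $a_n(E)$) and $B_k:=\bigcup_{r_i\ge k}C_i$ for the superlevel sets of the rank vector (renamed here to avoid clashing with the paper's path-defined $A_j$), your identity
\begin{equation*}
\Delta_{\w}(E)\;=\;\sum_{k\ge 1}\Delta_{\w}(\OO_{B_k})\;+\;\sum_{n}\bigl(\min(r^n_x,r^n_y)-s_n\bigr)
\end{equation*}
follows from Lemma \ref{LEM:POLGEN}(a) combined with $\Delta_{\w}(\OO_B)=\sum_{C_i\subseteq B}\lambda_i-e(B)$ (where $e(B)$ is the number of nodes internal to $B$) and the double counting $\sum_{k}e(B_k)=\sum_n\min(r^n_x,r^n_y)$; I have checked the identity, the inequality $s_n\le\min(r^n_x,r^n_y)$, and your equality analysis, and they are sound. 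The paper instead re-parametrizes the rank vector along minimal paths (Lemma \ref{LEM:PATH}(b)) and expresses $\Delta_{\w}(E)$ as a combination of the node defects $a_j,b_j$ with coefficients controlled by $(\star\star)_{A_j}$. Your decomposition buys strictly more: it shows that $\w$ is good as soon as $\Delta_{\w}(\OO_A)>0$ for every connected proper nonempty subcurve $A$, i.e.\ (Lemma \ref{LEM:BOUNDS}) as soon as $\OO_C$ is $\w$-stable. Together with Theorem \ref{StableOC} this would settle Conjecture \ref{CONJ:OCwSTAB} for \emph{all} nodal curves, not only those of compact type, so this half is of interest well beyond the theorem under review.

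The gap is the second half, and it is genuine: you never derive the positivity $\Delta_{\w}(\OO_A)>0$ for all connected proper nonempty $A$ from the actual hypotheses, which are the conditions $(\star\star)_{A_j}$ for the finite, path-defined family $\{A_j\}$ only. You yourself flag this propagation as ``the main obstacle'' and offer only a telescoping heuristic in its place. But this propagation is exactly where all of the paper's combinatorial machinery lives: the coherent orientation of $\Gamma_C$ (Lemma \ref{LEM:GOODDEFPRECEDE}), the relation $r_\gamma-r_i=\sum_{p_j\subseteq\gamma_i}(b_j-a_j)$ (Lemma \ref{LEM:PATH}), and the identity $\sum_{C_i\subseteq A_j}\bigl(\lambda_i-\tfrac{\delta_i}{2}\bigr)=\Delta_{\w}(\OO_{A_j})-\tfrac{\delta_{A_j}}{2}$ (Lemma \ref{LEM:PROPAj}(c)). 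Concretely, to close your argument one applies that machinery to $E=\OO_A$: at each boundary node of $A$ exactly one of $a_j,b_j$ equals $1$ and all other defects vanish, so the paper's formula exhibits $\Delta_{\w}(\OO_A)$ as a nonempty sum of coefficients $\tfrac12(1-\delta_{A_j})+\Delta_{\w}(\OO_{A_j})$ or $\tfrac12(1+\delta_{A_j})-\Delta_{\w}(\OO_{A_j})$ (for boundary nodes in $\cM'$) and of terms $\tfrac12$ (for boundary nodes outside $\cM'$), all strictly positive under $(\star\star)_{A_j}$. In other words, the missing step is essentially the paper's proof specialized to structure sheaves; your hoped-for shortcut around it is not supplied, so as written the proposal does not prove the stated theorem, even though its first half is a strong result in its own right.
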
 

Motivated by many examples (some of them have been reported in Section \ref{SEC:EXAMPLE}),  we make this conjecture:
\begin{conjecture*}[Conjecture \ref{CONJ:OCwSTAB}]
Let $(C,\w)$ be a polarized nodal curve. Then  
$\OO_C$ is $\w$-stable if and only if $\w$ is a good polarization. 
\end{conjecture*}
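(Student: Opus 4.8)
The plan is to treat the stated equivalence as two separate implications. The implication ``$\w$ good $\Rightarrow \OO_C$ is $\w$-stable'' is already contained in the final clause of Theorem~\ref{StableOC}, so nothing new is needed there; the entire content lies in the reverse implication ``$\OO_C$ $\w$-stable $\Rightarrow \w$ good'', and this is what I would try to establish.

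First I would reformulate $\w$-stability of $\OO_C$ as a system of numerical inequalities on subcurves. Since the restrictions $(\OO_A)_i$ of a subcurve quotient $\OO_A$ are either $\OO_{C_i}$ (degree $0$) or zero, one has $\Delta_{\w}(\OO_A)=\wdeg(\OO_A)$, while $\wrank(\OO_A)=w_A>0$ and $\mu_{\w}(\OO_C)=0$. Because the destabilizing quotients of $\OO_C$ are exactly structure sheaves of subcurves, $\OO_C$ is $\w$-stable if and only if $\Delta_{\w}(\OO_A)>0$ for every proper non-empty subcurve $A$. Writing $B=\overline{C\setminus A}$ and taking Euler characteristics in $0\to\OO_C\to\OO_A\oplus\OO_B\to\OO_{A\cap B}\to0$ yields the key additivity
\begin{equation*}
\Delta_{\w}(\OO_A)+\Delta_{\w}(\OO_B)=\delta_A ,
\end{equation*}
so that stability is equivalent to $0<\Delta_{\w}(\OO_A)<\delta_A$ for all such $A$. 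I would then compare this with the hypothesis $(\star\star)_{A}$ of Theorem~\ref{THM:DELTAGENERAL}: when $\delta_A=1$ the band $0<\Delta_{\w}(\OO_A)<1$ produced by stability is exactly $(\star\star)_A$, whereas for $\delta_A\ge 2$ stability gives only the wider window $0<\Delta_{\w}(\OO_A)<\delta_A$.

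On a curve of compact type this discrepancy disappears after a reduction. Because the dual graph is a tree, for any connected subcurve $A$ each connected component of its complement is attached to $A$ through a single node; hence every subcurve decomposes into building blocks $B_l$ with $\delta_{B_l}=1$, for which the additivity above together with stability forces $0<\Delta_{\w}(\OO_{B_l})<1$. I would organize this as an induction on the number $\gamma$ of components: the base case $\gamma=1$ is a smooth curve, where $\Delta_{\w}\equiv 0$ and goodness is trivial; in the inductive step I would peel off a leaf component $C_{\gamma}$ attached to $C'=\overline{C\setminus C_{\gamma}}$ at one node, express $\Delta_{\w}(E)$ for a depth one sheaf $E$ in terms of the corresponding quantity on $C'$, the contribution on $C_{\gamma}$, and a single local term at that node controlled by $0<\Delta_{\w}(\OO_{C_{\gamma}})<1$, and then apply the inductive hypothesis. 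Concretely this amounts to checking that, for compact type, the subcurves $A_j$ arising in the path construction of Theorem~\ref{THM:DELTAGENERAL} all satisfy $(\star\star)_{A_j}$, so that goodness follows directly from that theorem.

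The main obstacle is precisely the passage from compact type to arbitrary nodal curves. When the dual graph contains cycles there is no leaf to remove, and a subcurve $A$ may be glued to its complement along $\delta_A\ge 2$ nodes in a way that cannot be split into single-node pieces; the additivity identity then yields only $0<\Delta_{\w}(\OO_A)<\delta_A$, which is strictly weaker than the width-one window $(\star\star)_A$ needed to invoke Theorem~\ref{THM:DELTAGENERAL}. Closing this gap---either by showing that $\w$-stability of $\OO_C$ already forces the sharper bounds $\tfrac12(\delta_A-1)<\Delta_{\w}(\OO_A)<\tfrac12(\delta_A+1)$, or by producing a different sufficient condition for goodness that matches the stability band exactly---is the step I expect to be genuinely hard, and it is the reason the statement is left as a conjecture outside the compact-type case.
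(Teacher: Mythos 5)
Your proposal is correct as far as it goes and follows essentially the paper's own treatment: the implication ``good $\Rightarrow$ $\w$-stable'' is Theorem \ref{StableOC}, the reformulation of $\w$-stability of $\OO_C$ as $0<\Delta_{\w}(\OO_B)<\delta_B$ is Lemma \ref{LEM:BOUNDS}, and your compact-type argument is exactly that of Theorem \ref{THM:COMPACTTYPE}, namely that $\delta_{A_j}=1$ on a tree makes the stability window coincide with conditions $(\star\star)_{A_j}$ of Theorem \ref{THM:DELTAGENERAL}. You also pinpoint the same obstruction the paper does---for $\delta_A\geq 2$ stability yields only the band $(0,\delta_A)$, strictly weaker than the width-one window $(\star\star)_A$, which is merely sufficient---and this is precisely why the statement is left as a conjecture outside compact type and the cases of Corollary \ref{COR:pa01}.
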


In the third result of this paper we prove that this conjecture holds for curves of compact type:
\begin{theorem*}[Theorem \ref{THM:COMPACTTYPE}]
Let $(C,\w)$ be a polarized nodal curve of compact type. Then $\OO_C$ is $\w$-stable if and only if $\w$ is a good polarization.
\end{theorem*}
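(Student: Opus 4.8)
The forward implication is already available: the ``in particular'' part of Theorem~\ref{StableOC} shows that a good polarization makes $\OO_C$ $\w$-stable on any nodal curve. So the whole content of the statement is the converse, and the plan is to assume that $\OO_C$ is $\w$-stable and that $C$ is of compact type, and to deduce that $\w$ is good, i.e. that $\Delta_{\w}(E)\ge 0$ for every depth one sheaf $E$, with equality exactly when $E$ is locally free.

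The starting point is the additive description of $\Delta_{\w}$ in terms of the components and of the local behaviour at the nodes (see Section~\ref{SEC:NOTATIONS}). Writing $E_i$ for the restriction modulo torsion, $r_i=\rank(E_i)$, and $a_p$ for the free rank of $E$ at a node $p$ (so $0\le a_p\le\min(r_{i(p)},r_{j(p)})$, where $C_{i(p)},C_{j(p)}$ are the two branches at $p$), the restriction sequence $0\to E\to\bigoplus_i E_i\to\bigoplus_p \C^{a_p}\to 0$ gives
\begin{equation*}
\Delta_{\w}(E)=\sum_{i=1}^{\gamma} r_i\,\Delta_{\w}(\OO_{C_i})-\sum_{p}a_p ,
\end{equation*}
the last sum running over the nodes of $C$; in particular $\Delta_{\w}(\OO_C)=0$ forces $\sum_i\Delta_{\w}(\OO_{C_i})=\delta$, the number of nodes. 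This reduces goodness to controlling the numbers $\Delta_{\w}(\OO_{C_i})$ and the node defects $a_p$.

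Next I would turn the hypothesis into positivity on subcurves. For every proper subcurve $Y\subsetneq C$ the sheaf $\OO_Y$ is a depth one quotient of $\OO_C$, so $\w$-stability yields a strict slope inequality, which in the present normalization reads $\Delta_{\w}(\OO_Y)>0$. Here the compact-type hypothesis enters: the dual graph is a tree, every node is a bridge, and every subcurve is a forest whose number of internal nodes equals $\#\{C_i\subseteq Y\}$ minus the number of its connected pieces. Using this one rewrites the stability inequalities as $\sum_{C_i\subseteq Y}\Delta_{\w}(\OO_{C_i})>\delta_Y^{\mathrm{int}}$ for every proper $Y$, with equality for $Y=C$.

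Finally I would run a filtration-by-rank (``layer cake'') argument. Setting $Y_t=\bigcup_{\,r_i\ge t}C_i$ one has $\sum_i r_i\Delta_{\w}(\OO_{C_i})=\sum_{t\ge 1}\sum_{C_i\subseteq Y_t}\Delta_{\w}(\OO_{C_i})$ and $\sum_p\min(r_{i(p)},r_{j(p)})=\sum_{t\ge 1}\delta_{Y_t}^{\mathrm{int}}$, whence
\begin{equation*}
\Delta_{\w}(E)=\sum_{t\ge 1}\Big(\sum_{C_i\subseteq Y_t}\Delta_{\w}(\OO_{C_i})-\delta_{Y_t}^{\mathrm{int}}\Big)+\sum_{p}\big(\min(r_{i(p)},r_{j(p)})-a_p\big).
\end{equation*}
Every term of the first sum is $\ge 0$ by the subcurve inequalities (and $=0$ only when $Y_t$ is empty or all of $C$), and every term of the second is $\ge 0$ because $a_p\le\min(r_{i(p)},r_{j(p)})$; hence $\Delta_{\w}(E)\ge 0$. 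Moreover $\Delta_{\w}(E)=0$ forces each $Y_t$ to be trivial, i.e. the multirank is a constant $r$, and each $a_p$ to be maximal, i.e. $a_p=r$ at every node, which means $E$ is locally free; the identity $\sum_i\Delta_{\w}(\OO_{C_i})=\delta$ gives the converse, so this is exactly goodness. The step I expect to be the crux is the passage of the previous paragraph: stability only constrains the structure sheaves of subcurves, while goodness concerns arbitrary depth one sheaves, and the bridge between them is the forest identity $\delta_Y^{\mathrm{int}}=\#\{C_i\subseteq Y\}-(\text{connected components of }Y)$, valid precisely because $C$ has no cycles. Once a cycle is present the internal-node count of the subcurves $Y_t$ can exceed this value, the first sum above no longer telescopes against the $\min(r_{i(p)},r_{j(p)})$ terms, and the subcurve positivities supplied by stability are no longer enough to force $\Delta_{\w}(E)\ge 0$; this is exactly where the argument stops, and why the equivalence remains only a conjecture beyond compact type.
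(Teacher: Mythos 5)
Your proposal is correct, but it takes a genuinely different route from the paper's, so a comparison is in order. The paper's proof of Theorem \ref{THM:COMPACTTYPE} goes through the graph-theoretic machinery of Section \ref{SEC:3}: fix a marking $\cM$ and a set of minimal paths $\cP$ in $\Gamma_C$, orient the nodes, build the subcurves $A_j$ of Definition \ref{DEF:AJ}, prove the general formula of Theorem \ref{THM:DELTAGENERAL}, and then observe that for compact type every $A_j$ is nonempty with $\delta_{A_j}=1$ (Lemma \ref{LEM:PROPAj}(d)), so the sufficient conditions $(\star\star)_{A_j}$ become $0<\Delta_{\w}(\OO_{A_j})<1$, which is exactly what $\w$-stability of $\OO_C$ supplies via Lemma \ref{LEM:BOUNDS}. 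You replace all of this with a layer-cake decomposition along the multirank: with $Y_t=\bigcup_{r_i\ge t}C_i$ and writing $s_p$ for your $a_p$ (the paper's notation for the free rank at a node), you obtain
\begin{equation*}
\Delta_{\w}(E)=\sum_{t\ge 1}\Delta_{\w}(\OO_{Y_t})+\sum_{p}\bigl(\min(r_{i(p)},r_{j(p)})-s_p\bigr),
\end{equation*}
and every term is nonnegative once $\OO_C$ is $\w$-stable, with the equality analysis forcing constant multirank and vanishing residual ranks, i.e. local freeness. Every ingredient you need is in the paper: your first display is Lemma \ref{LEM:POLGEN}(a); the rewriting $\Delta_{\w}(\OO_Y)=\sum_{C_i\subseteq Y}\Delta_{\w}(\OO_{C_i})-\delta_Y^{\mathrm{int}}$ is that same lemma applied to $\OO_Y$; the positivity $\Delta_{\w}(\OO_Y)>0$ for proper $Y$ is Lemma \ref{LEM:BOUNDS} together with additivity over connected components (Lemma \ref{LEM:POLGEN}(e)); and $s_p\le\min(r_{i(p)},r_{j(p)})$ is Equation \eqref{EQ:RSA}. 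Your argument is shorter, requires no arbitrary choices (marking, base component, paths, orientation), and the equality case is cleaner.

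The one genuine defect is your own account of where the compact-type hypothesis enters, and it is worth dwelling on because it changes the scope of what you proved. The rewriting of $\Delta_{\w}(\OO_Y)$ does \emph{not} need the forest identity $\delta_Y^{\mathrm{int}}=\#\{C_i\subseteq Y\}-\#(\text{components of }Y)$: it holds on any nodal curve with smooth components, since the free rank of $\OO_Y$ at a node is $1$ at internal nodes and $0$ at boundary nodes. Likewise, the telescoping identity $\sum_{t\ge1}\delta_{Y_t}^{\mathrm{int}}=\sum_p\min(r_{i(p)},r_{j(p)})$ is pure counting: a node is internal to $Y_t$ if and only if both branch ranks are at least $t$, whether or not $\Gamma_C$ has cycles. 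So your closing claim that cycles make the sums fail to telescope is incorrect; as written, your argument nowhere uses that $\Gamma_C$ is a tree, and hence it proves the equivalence for \emph{every} nodal curve, i.e. it would settle Conjecture \ref{CONJ:OCwSTAB}, which the paper leaves open (and which its examples in Section \ref{SEC:EXAMPLE} only verify case by case). You should therefore either locate a hidden error or restate your result in its full strength; I could not find a flaw, since each step is one of the paper's own lemmas or elementary combinatorics, but attributing to compact type a role it does not play is a real gap in the exposition and masks the actual reach of the argument.
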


The idea is to prove that conditions $(\star\star)_{A_j}$ are  always implied  by stability of $\OO_C$ in the case of curves of compact type.
\\

Finally, we wonder how being a good polarization reflects on the line bundle inducing the polarization. This turns out to be related to the notion of balanced line bundles, as defined in \cite{Cap1}. Balanced line bundles are important tools when one has to deal with reducible nodal curves. For example, for such line bundles, a generalization of Clifford's Theorem holds. Our results can be summarized as (see Corollary \ref{COR:balanced} and Corollary \ref{COR:equivbalanced}):

\begin{theorem*}
Let $C$ be a stable nodal curve with $p_a(C) \geq 2$. Let $L$ be a line bundle of degree $p_a(C) -1$ and $\w$ be the polarization induced by $L$.
Then: 
\begin{enumerate}
    \item{} $L$ is  strictly balanced  if and only if 
    $\OO_C$ is $\w$-stable;
    \item{} if $C$ is of compact type, then $L$ is strictly balanced if and only if $\w$ is good.
    \end{enumerate}
\end{theorem*}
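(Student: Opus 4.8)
The plan is to turn $\w$-stability of $\OO_C$ into Caporaso's basic inequalities by a single slope computation, and then read off both statements. Write $g=p_a(C)$ and $d=g-1=\deg(L)$, so the polarization induced by $L$ assigns to a subcurve $Y\subseteq C$ the total weight $w_Y=\deg_Y(L)/d$. Normalizing the $\w$-slope as $\chi(\cdot)/\wrank(\cdot)$ — an additive shift of $\wdeg(\cdot)/\wrank(\cdot)$ by the constant $-\chi(\OO_C)$, hence giving the same stability comparisons — the sheaf $\OO_C$ has multirank $(1,\dots,1)$, so $\wrank(\OO_C)=1$, $\chi(\OO_C)=1-g$, and $\OO_C$ sits at slope $1-g$. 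The first step is the standard rank-one reduction: to test stability of $\OO_C$ it suffices to consider the saturated, torsion-free quotients $\OO_C\twoheadrightarrow \OO_Y$ indexed by proper subcurves $Y\subsetneq C$ (equivalently the subsheaves $\mathcal{I}_Y\cong \OO_{Y^c}(-Y\cap Y^c)$), and among these it is enough to let $Y$ range over the connected subcurves. I would justify this as in the rank-one theory on nodal curves: a slope-minimizing quotient of $\OO_C$ may be taken pure of rank $\le 1$ on each component, hence of the form $\OO_Y$.

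Next I would compute the quotient slope for a proper subcurve $Y$. By Mayer--Vietoris $\chi(\OO_Y)=1-p_a(Y)$, while $\wrank(\OO_Y)=w_Y=\deg_Y(L)/d$, so $\OO_C$ is $\w$-stable exactly when, for every proper connected $Y$,
\[
\frac{1-p_a(Y)}{\deg_Y(L)/d}\;>\;1-g .
\]
Since $d>0$ and $\deg_Y(L)>0$, this rearranges (using $1-g=-d$) to the clean inequality $\deg_Y(L)>p_a(Y)-1$. Letting $Y$ run over all proper subcurves — in particular replacing $Y$ by $Y^c$ and using $\chi(\OO_C)=\chi(\OO_Y)+\chi(\OO_{Y^c})-\delta_Y$, i.e.\ $p_a(Y^c)-1=g-p_a(Y)-\delta_Y$ — the inequality attached to $Y^c$ becomes $\deg_Y(L)<p_a(Y)-1+\delta_Y$. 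Hence $\OO_C$ is $\w$-stable if and only if
\[
p_a(Y)-1\;<\;\deg_Y(L)\;<\;p_a(Y)-1+\delta_Y\qquad\text{for every proper subcurve }Y.
\]
Because $\deg(L)=g-1$ makes the reference slope $d/\deg(\omega_C)=\tfrac12$, these are precisely the strict basic inequalities of \cite{Cap1}; that is, $L$ is strictly balanced. This establishes (1), with semistability of $\OO_C$ matching the non-strict (balanced) inequalities along the way.

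For (2) I would simply chain equivalences. Part (1) gives $L$ strictly balanced $\iff$ $\OO_C$ is $\w$-stable, and for $C$ of compact type Theorem \ref{THM:COMPACTTYPE} gives $\OO_C$ is $\w$-stable $\iff$ $\w$ is good. Composing yields $L$ strictly balanced $\iff$ $\w$ good, which is exactly (2).

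I expect the genuine work to lie in the reduction step and the bookkeeping, not in the arithmetic. One must verify carefully that it is enough to test stability of $\OO_C$ on the quotients $\OO_Y$ rather than on an arbitrary subsheaf, and that the \emph{strict} inequalities of strictly balanced correspond exactly to the \emph{strict} inequalities of $\w$-stability, with the borderline subcurves (where equality holds, separating strict from non-strict) being the delicate case; restricting to connected subcurves in the reduction is what keeps this manageable. A secondary point is to confirm that $L$ of degree $g-1$ indeed defines an honest polarization under the running hypotheses, so that $\deg_Y(L)>0$ holds for the subcurves used in the rearrangement and all the slope comparisons above are legitimate.
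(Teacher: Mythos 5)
Your proposal is correct and takes essentially the same route as the paper: both reduce $\w$-stability of $\OO_C$ to the subcurve inequalities $p_a(B)-1 < \deg_B(L) < p_a(B)-1+\delta_B$ via the rank-one criterion of Proposition \ref{DEF:equiv} (packaged in the paper as conditions $(\star)_B$ of Lemma \ref{LEM:BOUNDS}), observe that at $\deg(L)=p_a(C)-1$ these coincide exactly with Caporaso's strict basic inequalities (the exceptional-component condition being vacuous since $C$ is stable), and obtain (2) by chaining with Theorem \ref{THM:COMPACTTYPE}. The only difference is organizational: the paper passes through a more general proposition treating arbitrary degree $d$ on quasistable curves and then specializes, whereas you compute directly at $d=p_a(C)-1$.
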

\vspace{2mm}

{\bf Ackowledgements}. The authors want to express their gratitude to both the anonymous referees for their helpful remarks and keen suggestions. They contributed a lot to the final version of this paper.

\section{Notations and preliminary results on nodal curves}
\label{SEC:NOTATIONS}

In this section we will introduce  notations and  we recall useful facts about nodal curves, their subcurves and polarizations.
\\

Let $C$ be a connected reduced nodal curve over the complex field (i.e. having only ordinary double points as singularities).  We will denote by $\gamma$ the number of  irreducible components and by $\delta$ the number of nodes  of $C$. We will assume that each irreducible component $C_i$ is a smooth curve of genus $g_i$. For the  theory of nodal curves see \cite[Ch X]{ACG}. 
We will denote by 
$$ \nu \colon  C^{\nu}= {\bigsqcup}_{i=1}^{\gamma}C_i \to C$$
the normalization map. If $p \in C$ is a node, we will denote by $q_{p,i_1}$  and $q_{p,i_2}$ the branch  points  over the node $p$, with $q_{p,i_k} \in C_{i_k}$. 
%% serve g \geq 2??
From the exact sequence:
$$ 0  \to \OO_C \to \nu_*\nu^*(\OO_C) \to \bigoplus_{p\in Sing(C)}{\mathbb C_p} \to 0, $$
we deduce that 
$\chi(\OO_C) = \sum_{i=1}^{\gamma}\chi(\OO_{C_i}) -\delta$,
and we obtain the {\it arithmetic  genus}  of $C$: 
\begin{equation}
\label{EQ:paC}
p_a(C)=\sum_{i=1}^{\gamma}g_i+\delta-\gamma+1.
\end{equation}
The {\it dual graph} of $C$  is the graph $\Gamma_C$ whose vertices are identified with the irreducible components of $C$  and whose edges are identified with the nodes of $C$.  An edge joins two  vertices if the corresponding node is in the intersection of the corresponding irreducible components.
So, $\Gamma_C$ has $\delta$ edges and $\gamma$ vertices, moreover it is connected since $C$ is connected.   Its first Betti number is
$b_1(\Gamma_C)=\delta -  \gamma + 1$. We recall that a connected nodal curve is said to be of {\it compact type} if every irreducible component of $C$ is smooth and its dual graph  is a tree. For  a curve of compact type  we have $\delta - \gamma +1 = 0$ and the pull-back   $\nu^*$ of the normalization map induces an isomorphism  
$\Pic (C) \simeq {\bigoplus}_{i=1}^{\gamma} \Pic(C_i)$ between the Picard groups.
\\ 

Let  $B$ be  a   proper subcurve of   $C$,  the {\it complementary} curve of $B$  is defined as the closure of $C\setminus B$ and it is denoted by  $B^c$.  We will denote by $\Delta_B$ the Weil divisor $\Delta_B= B \cdot B^c = \sum_{p \in B \cap B^c}p$,  we will  denote its degree by  $\delta_B$ so  $\delta_B=\#B \cap B^c$.  In particular, when $C_i$ is a component of $C$, $\Delta_{C_i}$ is given by the nodes on $C_i$. To simplify notations we set $\Delta_{C_i}= \Delta_i$ and $\delta_i=\#\Delta_i$.
\\
As the only singularities of $C$ are nodes, $C$ can be embedded in a smooth projective surface, see \cite{A79}. This gives, for any  proper subcurve $B$ of $C$, the following fundamental exact sequence
\begin{equation}
\label{EXSEQ:CB}
0\to \OO_{B^c}(-\Delta_B) \to \OO_C \to \OO_B \to 0,
\end{equation}
from which  we deduce
\begin{equation}
\label{EQ:paB}    
p_a(C)=p_a(B)+p_a(B^c)+\delta_B-1.
\end{equation}
We recall that a connected nodal curve $C$  of arithmetic genus $p_a(C) \geq 2$ is called {\it stable} if each smooth rational component $E$  of $C$  meets $E^c$  in at least three points, i.e. $\delta_E \geq 3$. A curve is stable if and only if $\omega_C$ is ample. The curve $C$ is called {\it semistable} if $\delta_E \geq 2$.  If $C$ is semistable, a rational component $E$ with $\delta_E= 2$ is said to be an {\it exceptional component}.   Finally, $C$ is called { \it quasistable} if it is semistable  and if any two exceptional components do not intersect each other. Good references for these topics are \cite{Cap,Cap1}.
\\

Let $L$ be a line bundle on $C$. For all $i = 1,\dots,\gamma$,  let $L_i$ denote  the restriction  of $L$ to the component $C_i$.  It is a line bundle on $C_i$ with  $\deg(L_i) = d_i$.  We will call $(d_1,\dots,d_{\gamma})$ the {\it multidegree} of $L$. Then the {\it degree} of $L$ is $\deg(L) = \sum_{i=1}^{\gamma}d_i$.
We have an exact sequence
$$ 0 \to L \to {\nu}_{*}{\nu}^{*}L \to \bigoplus_{p \in \Sing(C)} {\mathbb C}_p \to  0,$$
from which we deduce 
$\chi(L) = \sum_{i=1}^{\gamma}\chi(L_i) - \delta$.
In complete analogy with the smooth case, 
Riemann-Roch's Theorem  holds for any line bundle $L$ on $C$: $\chi(L) = \deg(L)  + 1 -p_a(C)$. We recall that $L$ is ample if and only if $d_i >0$ for all $i =1,\dots \gamma$. 
We will denote by $\Pic^{\underline 0}(C) \subset \Pic(C)$ the variety parametrizing the isomorphism classes of line bundles on $C$ having multidegree $(0,\dots,0)$. 
\\

There exists on $C$ a dualizing sheaf $\omega_C$, which is invertible. 
%In particular for any locally free sheaf $F$  on $C$ we have, 
%$H^q(C,F) \simeq H^{1-q}(C,{\omega}_C \otimes F^*)$, where $F^* = {\mathcal Hom}(F, \OO_C)$ is the dual sheaf of $F$. 
%If  $\nu:\C^{\nu}\to C$ is the normalization map, we have
%$$\nu^*(\omega_C) = \omega_{C^{\nu}}\left(\sum_{j=1}^{\delta}(q_{p_j,i_1}+ q_{p_j,i_2})\right),$$
%where $q_{p_j,i_1}$ and $q_{p_j,i_2}$ are the branch points of the node $p_j$. 
For simplicity, if $L$ is a line bundle on $C$ and $B$ is a subcurve of $C$, we will denote by $\deg_B(L)=\deg_B(L|_B)$ the degree of $L|_B$ as line bundle on $B$. Then, we have $\omega_C|_B= \omega_{B}(B\cdot B^c)$, from which we obtain that the degree of $\omega_C|_B$ is $\deg_B(\omega_C|_B)=2p_a(B) -2 + \delta_B$. In particular, we have $\deg(\omega_C) = 2p_a(C) -2$.
\\

%We recall the  notion of balanced line bundle, see \cite{C}:
%\begin{definition}
%Let $C$ be a quasistable curve of  arithmetic genus $p_a(C) \geq 2$. A line bundle  $L$ on $C$ is said to be {\it balanced}  if the following  properties hold:
%\hfill\par
%\begin{enumerate}
%\item{} for every exceptional component $E$ of $C$  we have ${\deg}_E( L )= 1$;
%\item{} for any  proper subcurve $B$  we have
%\begin{equation}
%\label{EQ:BALANCED}
%\left\vert {\deg}_B(L) - \frac{\deg(L)}{2p_a(C)-2} {\deg}_B(\omega_C) \right\vert \leq \frac{1}{2}\delta_B.
%\end{equation}
%\end{enumerate}
%$L$ is said to be {\it strictly balanced} if 
% the  inequality  is strict  for every subcurve $B$ such that $B  \cap  B^c $ is not contained in the exceptional locus of $C$.
%\end{definition}

A central object in this paper will be the notion of polarization.   One can refer to \cite{OS} and \cite{Ses} for details about polarizations and their role in studying stability of depth one sheaves on reducible nodal curves.

\begin{definition}
\label{DEF:polarization}
A {\bf polarization} on the curve $C$ is a vector  ${\underline w}= (w_1,\dots,w_{\gamma}) \in {\mathbb Q}^{\gamma}$ such that
\begin{equation}
0 < w_i < 1 \quad \sum_{i=1}^{\gamma}w_i = 1.
\end{equation}
We will say that the pair $(C,\w)$ is a 
{\bf polarized curve}. 
\end{definition}
 
\begin{remark} 
\label{linebundlepolarization}
Let $L$ be an ample  line bundle on $C$, with $\deg(L) = d =  \sum_{i=1}^{\gamma}d_i.$
We can associate to $L$ a polarization $\w_L$ on $C$ by setting $\w_L = \frac{1}{d}(d_1,\dots,d_\gamma)$. We will call $\w_L$ the {\bf polarization induced by $L$}.
Note that for any polarization $\w$ there exists a line bundle $L$  which induces $\underline w$. Such a line bundle is not unique: many modifications of $L$ (for instance, one can consider a multiple of $L$), lead to the same polarization. 
\end{remark}

We recall that a {\it depth one sheaf} on a curve is a coherent sheaf $E$ with $\dim Supp(F)= 1$ for any subsheaf $F$ of $E$. On a nodal curve this is equivalent to saying that $E$ is torsion free. If $E$ is a depth one sheaf on $C$ and $B$ is any proper subcurve of $C$, we denote by $E|_B$ the restriction of $E$ to $B$ and by $E_B$ the restriction $E|_B$ modulo torsion. 
Then $E_B$ is a depth one sheaf on $B$. If $C_i$ is an irreducible component of $C$ we define $E_i$ to be $E_{C_i}$. We denote by $d_i$ the degree of $E_i$ and $r_i$ the rank of $E_i$. 
\\

If $\w$ is a polarization on $C$, we define the {\it $\w$-rank} and the {\it $\w$-degree} of $E$ as 
$\wrank(E)=\sum_{i=1}^{r}r_iw_i$ and $\wdeg(E)=\chi(E)-\wrank(E)\chi(\OO_C)$ respectively.

\begin{definition}
Let ${\underline w}$ be a polarization on $C$ and let
$E$ be a depth one sheaf on $C$. 
The {\bf $\w$-slope} of $E$ is defined as
$$\mu_{\w}(E) = \frac{\chi(E)}{\wrank(E)}=  \frac{\wdeg(E)}{\wrank(E)} + \chi(\OO_C).$$
$E$ is said to be {\bf ${\w}$-semistable}   if  for any proper subsheaf $F$ of $E$ we have $\mu_{\w}(F) \leq \mu_{\w}(E)$, i.e. if 
$$\frac{\wdeg(F)}{\wrank(F)} \leq \frac{\wdeg(E)}{\wrank(E)}.$$
$E$ is said to be {\bf $\w$-stable} if the above inequality is strict. 
\end{definition}

We stress that  in the case of depth one sheaves  having rank $1$ on each irreducible component of $C$, many different notions of semistability have been introduced. One can see for instance \cite{E1}  and \cite{OS}, for two different  approaches which give equivalent stability conditions.  
In particular, we recall the following characterization of $\w$-semistability, see \cite{OS}. 

\begin{proposition}
\label{DEF:equiv}
Let $(C,\w)$ be a polarized curve and  let $L$ be a depth one sheaf with $r_i=1$ for all $i$. Then $L$ is $\w$-semistable if and only if for any proper subcurve $B$ of $C$
$$\wdeg(L_B)\geq \wdeg(L) \wrank(L_B).$$
It is $\w$-stable if and only if the inequality is strict.
\end{proposition}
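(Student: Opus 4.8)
The plan is to reduce the semistability test to the subsheaves of $L$ cut out by subcurves, and then to match the resulting slope inequality with the stated degree inequality by a direct numerical computation. Since $r_i=1$ for every $i$, one has $\wrank(L)=\sum_i w_i =1$, hence $\mu_{\w}(L)=\chi(L)$ and $\wdeg(L)=\chi(L)-\chi(\OO_C)$; I will use these relations repeatedly. To each proper subcurve $B$ I attach the canonical exact sequence
$$ 0 \to E^B \to L \to L_B \to 0, $$
where $L_B$ is the restriction to $B$ modulo torsion and $E^B:=\Ker(L\to L_B)$ is the subsheaf of sections supported on $B^c$. Writing $w(B):=\sum_{C_i\subseteq B}w_i$, the hypothesis $r_i=1$ gives $\wrank(L_B)=w(B)$ and $\wrank(E^B)=1-w(B)$, while additivity of $\chi$ gives $\chi(L)=\chi(E^B)+\chi(L_B)$. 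Since $\emptyset\neq B\subsetneq C$, the sheaf $E^B$ is a nonzero proper subsheaf and $\wrank(E^B)=1-w(B)>0$.

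The second step is purely arithmetic. Applying the definition of $\w$-semistability to the subsheaf $E^B$ gives $\chi(E^B)/(1-w(B))\le \chi(L)$; substituting $\chi(E^B)=\chi(L)-\chi(L_B)$ and clearing the positive denominator yields $\chi(L_B)\ge \chi(L)\,w(B)$. Replacing $\chi(L_B)=\wdeg(L_B)+w(B)\,\chi(\OO_C)$ and $\chi(L)=\wdeg(L)+\chi(\OO_C)$ and cancelling the $\chi(\OO_C)$ terms turns this into exactly $\wdeg(L_B)\ge \wdeg(L)\,\wrank(L_B)$. In particular this settles the ``only if'' direction at once: a $\w$-semistable $L$ satisfies the inequality for every proper subcurve $B$, simply by testing the definition on $E^B$.

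The third step, which I expect to be the main obstacle, is to show that the subsheaves $E^B$ already detect semistability, so that the converse holds. Given an arbitrary proper subsheaf $F\subseteq L$, let $B^c$ be the union of the components on which $F$ has rank $1$, so that $F$ has rank $0$ on $B$. The composite $F\hookrightarrow L\to L_B$ then has image of rank $0$ on every component of $C$, and since $L_B$ is torsion free this image must vanish; hence $F\subseteq E^B$. As $F$ and $E^B$ share the same $\w$-rank $1-w(B)$, the quotient $E^B/F$ is torsion, so $\chi(F)\le\chi(E^B)$ and therefore $\mu_{\w}(F)\le\mu_{\w}(E^B)$. If $B$ is a proper subcurve, the degree hypothesis gives $\mu_{\w}(E^B)\le\mu_{\w}(L)$ by the computation of the second step, whence $\mu_{\w}(F)\le\mu_{\w}(L)$. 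The only remaining case is $B=\emptyset$, i.e. $F$ of full rank on all of $C$: then $L/F$ is supported at finitely many points, so $\chi(F)\le\chi(L)=\mu_{\w}(L)$ and $F$ does not destabilize either. This proves that $L$ is $\w$-semistable.

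The stable case is verbatim the same argument with every inequality made strict. The single technical point that makes the reduction work is that $L_B$ is taken modulo torsion: being torsion free, it annihilates the rank-$0$ image of any $F$ supported off its full-rank locus, and it is precisely this that forces $F\subseteq E^B$ and pins the maximal slope among subsheaves to the subcurve subsheaves $E^B$.
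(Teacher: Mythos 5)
Your proof is correct. Note that the paper itself offers no argument for this proposition: it is recalled from Oda--Seshadri \cite{OS}, so your write-up supplies a self-contained proof where the paper defers to a reference. The route you take is the natural one, and the genuinely load-bearing step is exactly where you say it is: because $L_B$ is torsion free, any subsheaf $F$ of rank zero on the components of $B$ maps to zero in $L_B$ and hence lands in $E^B=\Ker(L\to L_B)$; since $E^B/F$ is then torsion, $\chi(F)\le\chi(E^B)$, so the canonical subsheaves $E^B$ realize the maximal slope among subsheaves of prescribed support, and the subcurve inequalities (obtained from the $E^B$ by the Euler-characteristic bookkeeping in your second step) are equivalent to testing all subsheaves. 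Two points you leave implicit but which are immediate and worth stating: (i) a nonzero subsheaf $F$ of the depth one sheaf $L$ cannot have rank zero on every component, so the subcurve $B$ you attach to $F$ is never all of $C$ and your two cases are exhaustive; (ii) in the stable case your phrase ``verbatim with every inequality made strict'' hides the fact that in the full-multirank case the strictness does not come from the subcurve hypothesis at all, but from properness of $F$: there $L/F$ is a \emph{nonzero} torsion sheaf, so $\chi(F)<\chi(L)$. With those two remarks made explicit, the argument is complete in both the semistable and stable statements.
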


\section{Polarizations with nice properties}
\label{SEC:GOODPOL}

From now on we will assume that $C$  is a reducible nodal curve.

\subsection{The function $\Delta_{\w}$ and its properties}
\label{SUBSEC:DELTA}

\begin{definition}
Let $\w$ be a polarization on $C$. Let $E$ be a depth one sheaf on $C$ and let $E_i$ be the restricion of $E$ to $C_i$ modulo torsion. We define $\Delta_{\w}(E)$ as
$$\Delta_{\w}(E)=\wdeg(E)-\sum_{i=1}^{\gamma} \deg(E_i).$$
\end{definition}

Note that if $p_a(C) = 1$, then $\Delta_{\w}(E) =\chi(E) - \sum_{i=1}^{\gamma}\deg(E_i)$, so it does not depend on the  chosen polarization. 
\\ 

Let $E$ be a depth one sheaf on $C$. 
Let $p\in C_{i_1} \cap C_{i_2}$ be a node of $C$,  then  $\nu^{-1}(p) = \{q_{p,i_1}, q_{p,i_2}\}$ with $q_{p,i_k}\in C_{i_k}$. The stalk of $E$ in $p$ can be written (see \cite{Ses}) as
\begin{equation}
E_p=\OO_p^{s_p}\oplus \OO_{q_{p,i_1}}^{a_{p,i_1}}\oplus \OO_{q_{p,i_2}}^{a_{p,i_2}}
\end{equation}
where $s_{p}$ is the rank of the free part of the stalk of $E$ in $p$. Moreover, we have
\begin{equation}
\label{EQ:RSA}
r_{i_1}=s_p+a_{p,i_1} \qquad r_{i_2}=s_p+a_{p,i_2}.
\end{equation} 
We set $t_{p}=a_{p,i_1}+a_{p,i_2}$.

\begin{definition}
\label{DEF:RESIDUALRANK}
Let  $E$ be a   depth one  sheaf on $C$ and  let $p$ be a node,  we will call  $t_p$ the {\bf residual rank of $E$ at $p$}.
\end{definition}

\begin{remark}
\label{REM:RESIDUALZERO}
A depth one sheaf $E$ on $C$ is locally free if and only all the residual ranks of $E$ are zero.
\end{remark}

In the following lemma we summarize some basic properties satisfied by $\Delta_{\w}$.
\begin{lemma}
\label{LEM:POLGEN}
Let $C$ be a nodal curve with nodes $p_1,\dots, p_{\delta}$ and let  $\w$ be a polarization on it. Let  $E$ be a depth one sheaf on $C$. Then we have: 
\begin{enumerate}[(a)]
\item set $\lambda_i=\Delta_{\w}(\OO_{C_i})=1-g_i-w_i\chi(\OO_C)$. Then  $\sum_{i=1}^{\gamma}\lambda_i=\delta$ and 
\begin{equation}
\label{EQ:DELTAGENERAL}
\Delta_{\w}(E)=\sum_{i=1}^{\gamma}  r_i \lambda_i  - \sum_{j=1}^{\delta}s_{p_j};
\end{equation}
\item if $E$ is locally free, then $\Delta_{\w}(E)=0$, i.e. $\wdeg(E) = \sum_{i=1}^{\gamma}\deg(E_i)$;
\item if $r_i=r$ for all $i=1,\dots,\gamma$, then for any node $p_j$ we have  $a_{p_j,i_1}=a_{p_j,i_2}=t_{p_j}/2$. Moreover, $\Delta_{\w}(E)=\frac{1}{2}\sum_{j=1}^{\delta}t_{p_j}\geq 0$ and equality holds if and only if $E$ is locally free;
\item{} for any line bundle  $L$  we have 
$\Delta_{\w}(E \otimes L) = \Delta_{\w}(E)$; 
\item{} if $\supp(E)$  is a disjoint union of connected subcurves $B_s$ for $s=1,\dots, c$,  then 
$$ \Delta_{\w}(E) = \sum_{s=1}^c\Delta_{\w}(E|_{B_s});$$
\item{}  if $B$ is a   proper subcurve of $C$,  then     $\Delta_{\w}(E_B)+ \Delta_{\w}(E_{B^c}) = \Delta_{\w}(E) + \sum_{p_j \in B \cap B^c}s_{p_j}$;
\item if $E$ is locally free of rank $r$ and $B$ is a subcurve of $C$,  then  $\Delta_{\w}(E|_B)=r\Delta_{\w}(\OO_B).$
\end{enumerate}
\end{lemma}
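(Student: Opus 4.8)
The backbone of the whole lemma is a single computation of $\chi(E)$ in terms of the restrictions $E_i$ and the local data at the nodes. The plan is to first establish the exact sequence
\begin{equation*}
0 \to E \xrightarrow{\;\phi\;} \bigoplus_{i=1}^{\gamma} \nu_{i*}E_i \to \bigoplus_{j=1}^{\delta} \C_{p_j}^{s_{p_j}} \to 0,
\end{equation*}
where $\nu_i\colon C_i \to C$ is the inclusion and $\phi$ is the natural restriction map. To check this I would argue stalk-wise: at a smooth point $\phi$ is an isomorphism, while at a node $p \in C_{i_1}\cap C_{i_2}$ I would use the decomposition $E_p=\OO_p^{s_p}\oplus \OO_{q_{p,i_1}}^{a_{p,i_1}}\oplus \OO_{q_{p,i_2}}^{a_{p,i_2}}$. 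The summands supported on a single branch map isomorphically onto their images, whereas each of the $s_p$ free summands $\OO_p$ injects into $\OO_{q_{p,i_1}}\oplus \OO_{q_{p,i_2}}$ with cokernel $\C_p$ (the normalization sequence of the node). Hence $\phi$ is injective with cokernel $\C_p^{s_p}$ at each node. Taking Euler characteristics, and using that $\nu_i$ is finite so $\chi(\nu_{i*}E_i)=\chi(E_i)$, gives the key identity
\begin{equation*}
\chi(E)=\sum_{i=1}^{\gamma}\chi(E_i)-\sum_{j=1}^{\delta}s_{p_j}. \tag{$\dagger$}
\end{equation*}
This local analysis is the only genuinely delicate point; everything else is bookkeeping.

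For part (a) I would first compute $\lambda_i=\Delta_{\w}(\OO_{C_i})$ directly: since $\OO_{C_i}$ has $\w$-rank $w_i$, restricts to $\OO_{C_i}$ of degree $0$ on $C_i$ and to torsion on the other components, one gets $\lambda_i=\chi(\OO_{C_i})-w_i\chi(\OO_C)=1-g_i-w_i\chi(\OO_C)$; summing and inserting the value of $\chi(\OO_C)$ coming from \eqref{EQ:paC} yields $\sum_i\lambda_i=\delta$. For \eqref{EQ:DELTAGENERAL} I would substitute $(\dagger)$ into the definition of $\wdeg$ and replace each $\deg(E_i)$ by $\chi(E_i)-r_i\chi(\OO_{C_i})$ via Riemann--Roch on the smooth component $C_i$; the terms $\sum_i\chi(E_i)$ cancel and what survives is exactly $\sum_i r_i\lambda_i-\sum_j s_{p_j}$.

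Parts (b), (c) and (d) then drop out of \eqref{EQ:DELTAGENERAL} together with $\sum_i\lambda_i=\delta$. If $E$ is locally free of rank $r$ then $r_i\equiv r$ and $s_{p_j}\equiv r$, so $\Delta_{\w}(E)=r\delta-r\delta=0$, proving (b). If merely $r_i\equiv r$, then \eqref{EQ:RSA} forces $a_{p,i_1}=a_{p,i_2}=r-s_p$, hence $t_p=2(r-s_p)$ and $s_p=r-t_p/2$; substituting gives $\Delta_{\w}(E)=r\delta-\sum_j(r-t_{p_j}/2)=\tfrac12\sum_j t_{p_j}\ge 0$, with equality iff every $t_{p_j}=0$, i.e.\ iff $E$ is locally free by Remark \ref{REM:RESIDUALZERO}; this is (c). Part (d) is immediate from \eqref{EQ:DELTAGENERAL}, since tensoring by a line bundle leaves each $r_i$ and each local rank $s_{p_j}$ unchanged.

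For the additivity statements (e), (f), (g) I would argue through $\chi$, $\wrank$ and $\sum_i\deg(E_i)$. In (e), disjointness of the supports gives $E=\bigoplus_s E|_{B_s}$, and since $\wdeg$ and $\sum_i\deg(E_i)$ are additive over direct sums (sharing the common factor $\chi(\OO_C)$), additivity of $\Delta_{\w}$ follows. For (f) I would partition the nodes of $C$ into those internal to $B$, those internal to $B^c$, and the $\delta_B$ nodes in $B\cap B^c$; applying $(\dagger)$ to $E_B$ and to $E_{B^c}$ (whose internal nodes carry the same $s_{p}$ as $E$, while the boundary nodes are now smooth points contributing nothing) gives $\chi(E_B)+\chi(E_{B^c})=\chi(E)+\sum_{p_j\in B\cap B^c}s_{p_j}$, and combining this with $\wrank(E_B)+\wrank(E_{B^c})=\wrank(E)$ and the splitting of $\sum_i\deg(E_i)$ over the components of $B$ and $B^c$ yields the claimed relation. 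Finally, for (g), with $E$ locally free of rank $r$ the sheaf $E|_B$ is locally free of rank $r$ on $B$; writing $\chi(E|_B)$ through $(\dagger)$ applied to $B$ and using Riemann--Roch, the computation $\chi(E|_B)-\sum_{C_i\subset B}\deg(E_i)=r\,\chi(\OO_B)$ identifies $\Delta_{\w}(E|_B)$ with $r\bigl(\chi(\OO_B)-\wrank(\OO_B)\chi(\OO_C)\bigr)=r\,\Delta_{\w}(\OO_B)$, as wanted.
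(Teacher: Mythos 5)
Your proof is correct and follows essentially the same route as the paper's: the identity $\chi(E)=\sum_i\chi(E_i)-\sum_j s_{p_j}$ is exactly the Seshadri sequence the paper cites from its reference (you prove it stalk-wise instead of quoting it, which is fine), and from there parts (a)--(g) proceed by the same bookkeeping, including the application of the key identity to $E_B$ and $E_{B^c}$ in (f). The only cosmetic deviation is part (d), where you observe that \eqref{EQ:DELTAGENERAL} depends only on the multirank $(r_i)$ and the free ranks $(s_{p_j})$, both invariant under tensoring by a line bundle, whereas the paper recomputes $\chi(E\otimes L)$ directly; both arguments are valid.
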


\begin{proof}
(a) From \cite{Ses} we have an exact sequence
\begin{equation}
\label{EQ:Seshadri}
0\to E \to  {\bigoplus}_{i=1}^{\gamma} E_i \to T \to 0,
\end{equation}
where $T$ is a torsion sheaf on $C$ whose support is contained in the set of nodes. Hence  we have $\chi(E)= \sum_{i=1}^{\gamma}\chi(E_i) - \chi (T)$. More precisely, if $p_j$ is a node, we have $h^0(T_{p_j})=s_{p_j}$ so $\chi(T)=\sum_{j=1}^{\delta}s_{p_j}$.
Then, by definition, we have 
\begin{multline*}
\label{EQ:wdeg1}
\wdeg(E)= \sum_{i=1}^{\gamma}\chi(E_i) - 
\chi(T) -\wrank(E) \chi(\OO_C)= \sum_{i=1}^{\gamma}[d_i + r_i(1-g_i)] - \sum_{i=1}^{\gamma}w_ir_i\chi(\OO_C) - \sum_{j=1}^{\delta}s_{p_j}
\end{multline*}
so we get
$$\Delta_{\w}(E)=\sum_{i=1}^{\gamma}r_i[1-g_i-w_i\chi(\OO_C)] - \sum_{j=1}^{\delta}s_{p_j} = \sum_{i=1}^{\gamma}r_i \lambda_i  - \sum_{j=1}^{\delta}s_{p_j}.$$
 Finally, we have $$\sum_{i=1}^{\gamma} \lambda_i= \sum_{i=1}^{\gamma}[1-g_i-w_i\chi(\OO_C)]=\gamma-(p_a(C)-\delta+\gamma-1)-(1-p_a(C)) =\delta.$$

(b) Let $E$ be a locally free sheaf of rank $r$. By the previous formula we have
$$\Delta_{\underline{w}}(E)=\sum_{i=1}^{\gamma}r\lambda_i-\sum_{j=1}^{\delta}r=r\sum_{i=1}^{\gamma}\lambda_i-{\delta}r=r\delta-r\delta=0.$$

(c) Assume that $r_i = r$ for all $i=1,\dots,\gamma$.  By Equation \eqref{EQ:RSA} we get  $a_{p_j,i_1}=a_{p_j,i_2}:=a_j$.  As $s_{p_j} = r- a_j$, from (a)  we have
$$\Delta_{w}(E)=r \sum_{i=1}^{\gamma} {\lambda}_i - \sum_{j=1}^{\delta} (r - a_j) =r\delta- r \delta +  \sum_{j=1}^{\delta} a_j = \sum_{j=1}^{\delta}a_j= \frac{1}{2} \sum_{j=1}^{\delta}t_j.$$
Then $\Delta_{\w}(E)=0$ if and only if $t_j = 0$  for all $j$, that is $E$ is locally free.
\\

(d) Let $L$ be a line bundle on $C$ with $\deg(L_i) = l_i$, $i=1,\dots,\gamma$. Since $(E \otimes L)_i = E_i \otimes L_i$, we have
$\deg(E \otimes L)_i= d_i + r_il_i$ and $\chi(E_i \otimes L_i) = \chi(E_i) + r_il_i$. 
If we tensor the  exact sequence \eqref{EQ:Seshadri} by $L$ we obtain $\chi(E \otimes L) = \chi(E) - \sum_{i=1}^{\gamma} r_il_i$, hence we get
$$\Delta_{\w}(E \otimes L) 
= \chi(E \otimes L) - \wrank(E \otimes L) \chi(\OO_C) - \sum_{i=1}^{\gamma} (d_i + r_il_i)= $$
$$ = \chi(E) + \sum_{i=1}^{\gamma}r_il_i - \wrank(E) \chi(\OO_C) - \sum_{i=1}^{\gamma} (d_i + r_il_i)= \Delta_{\w}(E).$$ 

(e) Let   $B = \sqcup_{s=1}^c B_s$ be  the disjoint union of   $c \geq 1$ connected proper subcurves $B_s$. 
Since  $\Supp(E) = B$, then $E =  \oplus_{s=1}^cE_{B_s}$ and  $E_{B_s}$  is a depth one sheaf too. The $\w$-degree is additive with respect to direct sum,  so  we have
$$\Delta_{\w}(E)= \wdeg(E) - \sum_{C_{i}\subseteq B}d_i  =  \sum_{s= 1}^{c} \wdeg(E_{B_s}) - \sum_{s= 1}^c \sum_{C_i \subseteq B_s} d_i = \sum_{s=1}^c \Delta_{\w}(E_{B_s}).$$  
(f) Assume that $B$ is a proper connected curve.
By  (a)   we have:
$$\Delta_{\w}(E_B)= \sum_{C_i \subseteq B}r_i \lambda_i - \sum_{p_j \in B \setminus B^c}s_{p_j}.$$
If $B = \sqcup_{s=1}^c B_s$ is   the disjoint union of   $c \geq 1$ connected proper subcurves $B_s$. 
Then by (e) 
$$\Delta_{\w}(E_B)= \sum_{s=1}^c \Delta_{\w}(E_{B_s}) = \sum_{s= 1}^c \left[\sum_{C_i \subseteq B_s} r_i\lambda_i - \sum_{ p_j \in B_s \setminus B_s^c}s_{p_j}\right]= \sum_{C_i\subseteq B} r_i\lambda_i - \sum_{p_j \in B \setminus B^c}s_{p_j}.$$
and a similar formula holds for $B^c$. So we have:
$$\Delta_{\w}(E_B) + \Delta_{\w}(E_{B^c})= 
\sum_{C_i \subseteq B}r_i \lambda_i - \sum_{p_j \in B \setminus B^c}s_{p_j} + \sum_{C_i \subseteq B^c}r_i \lambda_i - \sum_{p_j \in B^c \setminus B}s_{p_j}=
\sum_{i=1}^{\gamma}r_i \lambda_i- \sum_{p_j \not\in B \cap B^c}s_{p_j}.$$
As $\Delta_{\w}(E) = \sum_{i=1}^{\gamma}r_i\lambda_i - \sum_{j=1}^{\delta}s_{p_j},$ we obtain:
$$\Delta_{\w}(E_B) + \Delta_{\w}(E_{B^c})= \Delta_{\w}(E) + \sum_{p_j \in B \cap B^c}s_{p_j}.$$
(g) By (e) it is enough to prove the assertion for  any  connected  subcurve $B$. Then  $ B = \bigcup_{k=1}^{b}C_{i_k}$. 
Then
\begin{multline*}
\Delta_{\w}(E|_B)= \wdeg(E|_B) - \sum_{k=1}^bd_{i_k} = \chi(E|_B) - \wrank(E|_B) \chi(\OO_C) - \sum_{k=1}^{b} d_{i_k}=\\ = \sum_{k=1}^{b}d_{i_k} + r(1-p_a(B)) - r\sum_{k=1}^b w_{i_k} \chi(\OO_C) - \sum_{k=1}^b d_{i_k}  = r \wdeg(\OO_B)  = r \Delta_{\w}(\OO_B).
\end{multline*}
as claimed.
\end{proof}

The following proposition gives a description of $\Delta_{\w}(E)$ as function of the residual ranks of $E$ at its nodes and its multirank. 

\begin{proposition}
\label{PROP:DELTARESIDUAL}
Let $(C,\w)$ be a connected nodal polarized curve. Let  $E$ be a depth one sheaf on $C$, then we have
$$\Delta_{\w}(E)=\sum_{i=1}^{\gamma}r_i\left(\lambda_i - \frac{\delta_i}{2}\right)+\frac{1}{2}\sum_{j=1}^{\delta} t_{p_j},$$
where $t_j$ is the residual rank of $E$  at the node $p_j$.

\end{proposition}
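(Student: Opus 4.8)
The plan is to start from the formula already established in Lemma \ref{LEM:POLGEN}(a), namely $\Delta_{\w}(E)=\sum_{i=1}^{\gamma} r_i\lambda_i - \sum_{j=1}^{\delta} s_{p_j}$, and to rewrite the correction term $\sum_{j} s_{p_j}$ in terms of the residual ranks $t_{p_j}$ and the local node numbers $\delta_i$. Comparing the target identity with the expression from (a), one sees that the two agree precisely when
\begin{equation*}
\sum_{i=1}^{\gamma} r_i\,\delta_i = 2\sum_{j=1}^{\delta} s_{p_j} + \sum_{j=1}^{\delta} t_{p_j}.
\end{equation*}
So the whole statement reduces to this single identity: dividing it by $2$ and substituting into (a) reproduces exactly the claimed formula, since the term $-\tfrac12\sum_i r_i\delta_i$ combines with $\sum_i r_i\lambda_i$ to give $\sum_i r_i(\lambda_i-\tfrac{\delta_i}{2})$.

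To establish the displayed identity I would argue by double counting the incidences between components and nodes. The quantity $\sum_i r_i \delta_i$ adds, for each component $C_i$, its rank $r_i$ once for every node lying on $C_i$. Reorganizing the same sum node by node, each node $p_j \in C_{i_1}\cap C_{i_2}$ contributes $r_{i_1}+r_{i_2}$, where $i_1\neq i_2$ because the irreducible components of $C$ are smooth and hence no node is a self-intersection. This gives $\sum_{i} r_i \delta_i = \sum_{j=1}^{\delta}(r_{i_1}+r_{i_2})$, with $i_1,i_2$ denoting the two components through $p_j$.

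It then remains to evaluate $r_{i_1}+r_{i_2}$ locally at each node. From Equation \eqref{EQ:RSA} we have $r_{i_1}=s_{p_j}+a_{p_j,i_1}$ and $r_{i_2}=s_{p_j}+a_{p_j,i_2}$, whence $r_{i_1}+r_{i_2}=2s_{p_j}+(a_{p_j,i_1}+a_{p_j,i_2})=2s_{p_j}+t_{p_j}$ by the definition of the residual rank. Summing over all nodes yields the displayed identity, and the proof is concluded. The computation is entirely formal; the only point requiring a little care is the bookkeeping in the double count, specifically the fact that each node contributes to $\delta_{i_1}$ and $\delta_{i_2}$ for \emph{two distinct} components, which is exactly what the smoothness of the components of $C$ guarantees.
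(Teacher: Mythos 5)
Your proposal is correct, and its skeleton matches the paper's: both start from Lemma \ref{LEM:POLGEN}(a), use the local relation $r_{j,i_1}+r_{j,i_2}=2s_{p_j}+t_{p_j}$ coming from Equation \eqref{EQ:RSA}, and reduce everything to the identity $\sum_{i=1}^{\gamma} r_i\delta_i=\sum_{j=1}^{\delta}(r_{j,i_1}+r_{j,i_2})$, which is exactly Equation \eqref{EQ:SWITCH} of the paper. The genuine difference is in how that identity is established. The paper proves it by induction on the number $\gamma$ of components: it invokes the existence of a non-disconnecting vertex of the connected dual graph $\Gamma_C$, removes the corresponding component $C_\gamma$, and carries out a rather delicate bookkeeping (the numbers $\epsilon_i$ of nodes shared with $C_\gamma$) to split the sum over nodes into those on and off $C_\gamma$. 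You instead prove it by a direct double count: $\sum_i r_i\delta_i$ is the rank-weighted count of incidences between components and nodes, and regrouping the same sum node by node gives $\sum_j(r_{i_1}+r_{i_2})$, because each node lies on exactly two distinct components --- here the smoothness of the components is precisely what is needed, as you correctly flag. Your route is shorter and more elementary: it dispenses with the graph-theoretic input altogether and does not even use connectedness of $C$, which the paper's induction requires in order to produce the non-disconnecting component. Nothing is lost in the exchange; the paper's induction is essentially an unwound version of the same interchange of summation, so your argument can be regarded as a streamlined proof of the same key combinatorial step.
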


\begin{proof}
Let $p_j$ be a node, assume that $ p_j \in C_{i_1} \cap C_{i_2}$. To avoid confusion we  denote by $r_{j,i_k}$  the rank of $E|_{C_{i_k}}$ so that 
$$r_{j,i_1}+r_{j,i_2}=2s_{p_j}+a_{p_j,i_1}+a_{p_j,i_2}=2s_{p_j}+t_{p_j}.$$
We recall that we set $\lambda_i=1-g_i-w_i\chi(\OO_C)$,  so by Equation \eqref{EQ:DELTAGENERAL} we have
\begin{equation}
\label{EQ:DELTAPARTIAL}
\Delta_{\w}(E)=
\sum_{i=1}^{\gamma}\lambda_ir_i-\sum_{j=1}^{\delta}\frac{r_{j,i_1}+r_{j,i_2}-t_{p_j}}{2}= 
\sum_{i=1}^{\gamma}\lambda_ir_i-\frac{1}{2}\sum_{j=1}^{\delta}(r_{j,i_1}+r_{j,i_2})+\frac{1}{2}\sum_{j=1}^{\delta}t_{p_j}.
\end{equation}
We claim that the following relation holds:
\begin{equation}
\label{EQ:SWITCH}
\sum_{j=1}^{\delta}(r_{j,i_1}+r_{j,i_2})=\sum_{i=1}^{\gamma}r_i\delta_i
\end{equation}
We will proceed by induction on $\gamma$. If $C$ has $2$ components and $\delta$ nodes,  we denote by $r_1$ and $r_2$ the ranks of the restrictions of $E$ to the components. If $p_j$ is a node, then $r_{j,i_1}+r_{j,i_2}=r_1+r_2$ so 
$$\sum_{j=1}^{\delta}(r_{j,i_1}+r_{j,i_2})=\sum_{j=1}^{\delta}(r_1+r_2)=\delta r_1+\delta r_2$$
so  Equation \eqref{EQ:SWITCH} holds when $\gamma=2$.
\\

Assume now, by induction hypothesis, that the same equation holds for nodal curves with  at most $\gamma-1$ components. Let $C$  be a nodal curve with  $\gamma$ components. We claim that there exists a component of $C$ whose complementary curve is connected. 
This is true since the graph $\Gamma_C$ is connected and every connected graph has a non-disconnecting vertex\footnote{Let $G=(\cV,\cE)$ be a finite connected graph, with at least $3$ vertices. Then one can fix $P\in \cV$ and consider the distance $d_P(Q)$ of $Q$ from $P$, i.e. the minimum number of edges that one needs to go through in order to make a path from $P$ to $Q$. Let $R\in \cV$ such that $d_P(R)=\max_{Q\in \cV}d_P(Q)$. Then $R$ is a non-disconnecting vertex of $G$. Indeed, if $Q\in \cV$ different from $R$, the shorthest path from $P$ to $Q$ cannot pass through $R$ otherwise $d_P(R)<d_P(Q)$ and we get a contradiction.}. 
Fix an ordering of the components of $C$ in such a way that this non-disconnecting curve is $C_\gamma$. By assumption, its complementary curve $C_{\gamma}^c$ is connected, with $\gamma'=\gamma-1$ components with indices $i=1,\dots,\gamma-1$. Moreover, it has $\delta'=\delta-\delta_\gamma$ nodes and $r_i'=r_i$ for all $i=1,\dots,\gamma-1$.
We can write
\begin{equation}
\label{EQ:SPLIT}
\sum_{j=1}^{\delta}(r_{j,i_1}+r_{j,i_2})=\sum_{p_j\not\in C_\gamma}(r_{j,i_1}+r_{j,i_2})+\sum_{p_j\in C_\gamma}(r_{j,i_1}+r_{j,i_2}).
\end{equation}
In the first summation on the right hand side of Equation \eqref{EQ:SPLIT}, the sum is done over the nodes which are not on $C_\gamma$ so they are exactly the nodes of $C_\gamma^c$ as  a nodal curve. Then, by induction hypothesis, we have
$$\sum_{p_j\not\in C_\gamma}(r_{j,i_1}+r_{j,i_2})=\sum_{j=1}^{\delta'}(r_{j,i_1}+r_{j,i_2})=\sum_{i=1}^{\gamma'}(r_i'\delta_i').$$
For all  $i=1,\dots,\gamma-1$ we  denote by $\epsilon_i$ the number of points of $C_i\cap C_\gamma$, i.e. the nodes common to $C_i$ and $C_\gamma$. Then we have   $\delta'_i=\delta_i-\epsilon_i$, as
the nodes of $C_i \cap C_{\gamma}$ are not nodes of 
$C_{\gamma}^c.$
%since exactly $\epsilon_i$ nodes of the $\delta_i$ nodes of $C_i$ lie on $C_\gamma$ so the are not nodes of the curve $C_\gamma^c$. 
If $\epsilon_i=0$,   $C_i$ and $C_\gamma$ are disjoint  and  $\delta'_i=\delta_i$.  So we have: 
\begin{equation}
\label{EQ:SPLIT1}
\sum_{p_j\not\in C_\gamma}(r_{j,i_1}+r_{j,i_2})=\sum_{i=1}^{\gamma-1}r_i(\delta_i-\epsilon_i).
\end{equation}

In the second summation on the right hand side of Equation \eqref{EQ:SPLIT}, the sum is done over the $\delta_\gamma$ nodes which are on $C_\gamma$ so we can write
$$\sum_{p_j\in C_\gamma}(r_{j,i_1}+r_{j,i_2})=\sum_{C_i\,|\, C_i\cap C\gamma\neq \emptyset}(r_{\gamma}+\epsilon_i r_i)$$
as $r_i=r_{j,i_k}$ for some $j$ if and only if $C_i$ is one of the components  through $p_j$ and this happens one times for each of the nodes which are on both $C_i$ and $C_\gamma$, i.e. exactly $\epsilon_i$ times. Hence
\begin{equation}
\label{EQ:SPLIT2}
\sum_{p_j\in C_\gamma}(r_{j,i_1}+r_{j,i_2})=[\cdots]=\sum_{C_i\,|\, \epsilon_i>0}r_{\gamma}+\sum_{C_i\,|\, \epsilon_i>0}\epsilon_i r_i+\sum_{C_i\,|\, \epsilon_i=0}\epsilon_i r_i=r_{\gamma}\delta_\gamma +\sum_{i=1}^{\gamma-1}\epsilon_i r_i.
\end{equation}
Then, using Equations \eqref{EQ:SPLIT1} and \eqref{EQ:SPLIT2}, we can rewrite Equation \eqref{EQ:SPLIT} as
$$
\sum_{j=1}^{\delta}(r_{j,i_1}+r_{j,i_2})=\sum_{i=1}^{\gamma-1}r_i(\delta_i-\epsilon_i)+r_{\gamma}\delta_\gamma +\sum_{i=1}^{\gamma-1}\epsilon_i r_i=\sum_{i=1}^{\gamma}r_i\delta_i
$$
which concludes the proof of the claim. From Equations \eqref{EQ:DELTAPARTIAL} and \eqref{EQ:SWITCH} one obtains easily the desired result.
\end{proof}

\subsection{Good polarizations and main properties}
\label{SUBSEC:GOODPOL}

~\\ 
Now we will deal with a class of polarizations which will allow us to extend some properties that hold for locally free sheaves on smooth curves to depth one sheaves on polarized  nodal curves (see Theorem \ref{StableOC}). In order to do this we will use the function $\Delta_{\w}$ that we have studied in Subsection \ref{SUBSEC:DELTA}.
 
\begin{definition}
\label{GOOD}
Let $(C,\w)$ be a polarized nodal curve. We say that $\w$ is a {\bf good polarization} if $\Delta_{\w}(E)\geq 0$ for all depth one sheaves  $E$ on $C$ and equality holds if and only if $E$ is locally free. 
\end{definition}

By Lemma \ref{LEM:POLGEN} (b), for any polarization $\underline w$ we have $\Delta_{\w}(E) = 0$  for all locally free sheaves on $C$.  Nevertheless, it can happen that $\Delta_{\w}(E)<0$ for a depth one sheaf which is not locally free, as the next example shows.

\begin{example}
Let $(C,\w)$ be a polarized nodal curve with two smooth components $C_1$ and $C_2$ of genus $2$ and a single node. Then $\Delta_{\w}(\OO_{C_1})=-1+3w_1$.  If we consider the polarization  $\w=\left(\frac{1}{6},\frac{5}{6}\right)$, we have $\Delta_{\w}(\OO_{C_1})=-1/2 < 0$. Moreover, by Proposition \ref{DEF:equiv}, this also implies that $\OO_C$ is a $\w$-unstable sheaf on $C$.
\end{example}

First of all we will see that on all stable nodal curves with $p_a(C)\geq 2$ there exists a good polarization (we will see in Remark \ref{REM:cycle} that is not true in general).

\begin{proposition}
\label{polarizationeta}
Let $C$ be a stable connected nodal curve with $p_a(C)\geq 2$ and let be  $\underline{\eta}$ be the polarization induced by $\omega_C$ (this is often called {\it canonical polarization}). Then, $\underline{\eta}$ is a good polarization on $C$.
% Let $C$ be a stable connected nodal curve with $p_a(C)\geq 2$ and set
% $$\eta_i= \frac{g_i-1+\delta_i/2}{p_a(C)-1}, \quad i = 1,\dots \gamma.$$
% Then, $\underline{\eta}=(\eta_1,\dots,\eta_{\gamma})$ is a polarization on $C$. Moreover $\Delta_{\underline{\eta}}(E)=\frac{1}{2}\sum_{j=1}^{\delta}t_{p_j}$ 
% and, consequently, $\underline{\eta}$ is a good polarization.
\end{proposition}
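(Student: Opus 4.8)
The plan is to reduce the statement to the closed formula for $\Delta_{\w}$ in terms of residual ranks, namely Proposition \ref{PROP:DELTARESIDUAL}, and to exhibit that for the canonical polarization the entire ``rank contribution'' $\sum_i r_i(\lambda_i - \delta_i/2)$ vanishes identically. This reduces goodness to the obvious positivity of residual ranks.

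First I would write down the weights of $\underline{\eta}$ explicitly. By definition $\eta_i = \deg(\omega_C|_{C_i})/\deg(\omega_C)$. Using the degree formula recalled in Section \ref{SEC:NOTATIONS}, namely $\deg_B(\omega_C|_B)=2p_a(B)-2+\delta_B$ applied with $B=C_i$ (so that $p_a(C_i)=g_i$ and $\delta_{C_i}=\delta_i$), one gets $\deg(\omega_C|_{C_i})=2g_i-2+\delta_i$, while $\deg(\omega_C)=2p_a(C)-2$. At this stage I would check that $\underline{\eta}$ is a genuine polarization in the sense of Definition \ref{DEF:polarization}: since $C$ is stable, $\omega_C$ is ample, so $\deg(\omega_C|_{C_i})>0$ and hence $\eta_i>0$; and because the $\eta_i$ are positive and sum to $1$ (degree being additive over the components), each is automatically $<1$.

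The key step is the computation of $\lambda_i$ for $\underline{\eta}$. Recall from Lemma \ref{LEM:POLGEN}(a) that $\lambda_i=1-g_i-\eta_i\chi(\OO_C)$, and that $\chi(\OO_C)=1-p_a(C)$, so $\lambda_i=1-g_i+\eta_i(p_a(C)-1)$. Substituting the value of $\eta_i$ gives $\eta_i(p_a(C)-1)=\tfrac{2g_i-2+\delta_i}{2}$, whence the genus terms cancel and I obtain the clean identity $\lambda_i=\delta_i/2$ for every component $C_i$. This identity is the heart of the matter.

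Finally, I would plug this into Proposition \ref{PROP:DELTARESIDUAL}: for any depth one sheaf $E$ one has $\Delta_{\underline{\eta}}(E)=\sum_{i=1}^{\gamma}r_i\bigl(\lambda_i-\tfrac{\delta_i}{2}\bigr)+\tfrac{1}{2}\sum_{j=1}^{\delta}t_{p_j}=\tfrac{1}{2}\sum_{j=1}^{\delta}t_{p_j}$, since each bracket vanishes. As the residual ranks $t_{p_j}$ are non-negative, this yields $\Delta_{\underline{\eta}}(E)\geq 0$, with equality precisely when all $t_{p_j}=0$, that is, by Remark \ref{REM:RESIDUALZERO}, precisely when $E$ is locally free. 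This is exactly the definition of a good polarization (Definition \ref{GOOD}). I do not expect a genuine obstacle here: the substantive work is already packaged in Proposition \ref{PROP:DELTARESIDUAL}, and once $\lambda_i=\delta_i/2$ is established the conclusion is immediate; the only points demanding care are confirming that $\underline{\eta}$ meets the constraints of a polarization (using ampleness of $\omega_C$, i.e. stability of $C$) and computing $\deg(\omega_C|_{C_i})$ correctly.
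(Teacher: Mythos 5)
Your proposal is correct and follows essentially the same route as the paper: both establish the key identity $\lambda_i=\delta_i/2$ for the canonical polarization and then invoke Proposition \ref{PROP:DELTARESIDUAL} to get $\Delta_{\underline{\eta}}(E)=\tfrac{1}{2}\sum_{j}t_{p_j}$, concluding via Remark \ref{REM:RESIDUALZERO}. Your additional verification that $\underline{\eta}$ satisfies the constraints of Definition \ref{DEF:polarization} is a minor elaboration of the paper's remark that ampleness of $\omega_C$ makes the definition well posed.
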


\begin{proof}
First of all, since $C$ is a stable curve, we have that $\omega_C$ is an ample line bundle so the definition of $\underline{\eta}$ makes sense. 
As recalled in Section \ref{SEC:NOTATIONS} we have
$\omega_C|_{C_i}=\omega_{C_i}\otimes (\Delta_i)$ so we have
$$\eta_i= \frac{g_i-1+\delta_i/2}{p_a(C)-1}, \quad i = 1,\dots \gamma.$$

% To prove that $\underline{\eta}$ is a polarization,  we have to check  that  $\eta_i\in (0,1)$ and
% $\sum_{i=1}^{\gamma} \eta_i= 1$. Set $\mu_i=g_i -1 + \delta_i/2$.
% Note that $\mu_i>0$ for all $i$ by the assumption on the stability of $C$. Indeed, if $g_i\geq 1$ we have $\mu_i=\delta_i/2$ which is positive as $C$ is connected. If $g_i =0$ than $\mu_i=-1+\delta_i/2 > 0$ as any rational component of $C$ has $\delta_i \geq 3$. This implies $\eta_i > 0$ too. Notice that, as $\sum_{i=1}^{\gamma}{\delta_i}=2\delta$, we have
% $$\sum_{j\neq i}\mu_j=\sum_{j\neq i}\left(g_j-1+\frac{1}{2}\delta_j\right)=p_a(C)-1-\mu_i.$$
% Then, $\eta_i<1$ is equivalent to  $\mu_i<p_a(C)-1$ which is implied by the above inequality since we have seen that $\mu_j>0$ for all $j$.  Finally, we have:
% $$\sum_{i=1}^{\gamma}\eta_i=\sum_{i=1}^{\gamma} \frac{g_i-1+\delta_i/2}{p_a(C)-1}=\frac{p_a(C)-\delta+\gamma-1-\gamma+\delta}{p_a(C)-1}=1.$$

In order to see that $\underline{\eta}$ is good we will compute
$\Delta_{\underline{\eta}}(E)$ for a depth one sheaf $E$.
For the canonical polarization we have $${\lambda}_i = 1-g_i-\eta_i\chi(\OO_C)=\delta_i/2,$$ so, by Proposition \ref{PROP:DELTARESIDUAL} we can conclude that 
$$\Delta_{\underline{\eta}}(E)=\frac{1}{2}\sum_{j=1}^{\delta}t_{p_j}.$$
In particular, $\Delta_{\underline{\eta}}(E)\geq 0$ and equality holds if and only if $t_{p_j}=0$ for all $j$. By Remark \ref{REM:RESIDUALZERO}, this happens if and only if $E$ is locally free.
\end{proof}

The following theorem summarizes some important properties which hold when we deal with good polarizations. Recall that $\Pic^{\underline{0}}(C)$ is the variety parametrizing line bundles having degree $0$ on each component (see Section \ref{SEC:NOTATIONS}).

\begin{theorem}
\label{StableOC}
Let $C$ be a nodal curve and $\w$ a good  polarization on it. Let $E$ be a depth one sheaf on $C$. Then we have the following properties:  
\begin{enumerate}[(a)]
\item{} Assume that $E$ is locally free and,  for $i=1,\dots,\gamma$, $E_i$ is stable with $\deg(E_i)= 0$.  Then $E$ is $\w$-stable.  
%\item{} If $E \in \Pic^{\underline 0}(C)$ then, $E$ is $\w$-stable. In particular, $\OO_C$ is $\w$-stable.
\item{} If $E$ is globally generated, then $\wdeg(E) \geq 0$. 
\item{} If $E$ is $\w$-semistable and $\wdeg(E)>0$,  then $h^0(E^*)=0$. 
\end{enumerate}
In particular, if $E=\OO_C$ or more generally $E \in \Pic^{\underline 0}(C)$ then, $E$ is $\w$-stable.
\end{theorem}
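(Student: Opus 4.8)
The unifying tool is the identity $\wdeg(E)=\Delta_{\w}(E)+\sum_{i=1}^{\gamma}\deg(E_i)$, which is just the definition of $\Delta_{\w}$, together with the two positivity inputs at our disposal: goodness gives $\Delta_{\w}(F)\geq 0$ for every depth one sheaf $F$, with equality exactly for locally free sheaves, while the hypotheses on the restrictions $E_i$ control the $\deg(E_i)$. I will use repeatedly that $\chi$ and $\wrank$, hence $\wdeg$, are additive on short exact sequences, and that $\wdeg(\OO_C)=0$ (indeed $\OO_C$ is locally free, so $\Delta_{\w}(\OO_C)=0$ by Lemma \ref{LEM:POLGEN}(b), and each $\deg(\OO_{C_i})=0$). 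Since $\mu_{\w}(G)-\chi(\OO_C)=\wdeg(G)/\wrank(G)$, the slope comparisons defining (semi)stability become sign conditions on $\wdeg$.

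For (a), local freeness of $E$ forces $\Delta_{\w}(E)=0$, whence $\wdeg(E)=\sum_i\deg(E_i)=0$ and $\mu_{\w}(E)=\chi(\OO_C)$; stability amounts to $\wdeg(F)<0$ for every proper subsheaf $F$. Full--rank proper subsheaves are immediate: there $E/F$ is torsion of positive length, so $\chi(F)<\chi(E)$ while $\wrank(F)=\wrank(E)$. Otherwise I replace $F$ by its saturation (which only raises $\mu_{\w}$) and argue on the depth one quotient $Q=E/F$, where it suffices to prove $\wdeg(Q)>0$. Writing $\wdeg(Q)=\Delta_{\w}(Q)+\sum_i\deg(Q_{C_i})$, goodness gives $\Delta_{\w}(Q)\geq 0$, and each $Q_{C_i}$ is a quotient of the semistable slope $0$ bundle $E_i$, so $\deg(Q_{C_i})\geq 0$; thus $\wdeg(Q)\geq 0$. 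The main obstacle is strictness: if $\wdeg(Q)=0$ then $Q$ is locally free on $C$, and each $Q_{C_i}$ is a slope $0$ quotient of the \emph{stable} $E_i$, hence $0$ or $E_i$. Therefore $Q$ is supported on a proper nonempty subcurve $D$ and vanishes off $D$; at a node joining $D$ to its complement (one exists since $C$ is connected) a locally free sheaf must have equal branch ranks, contradicting ranks $r_i\geq 1$ and $0$. Hence $\wdeg(Q)>0$, i.e. $\mu_{\w}(F)<\mu_{\w}(E)$. This is exactly where stability of the $E_i$ (and not mere semistability) is used, and where I expect the delicate point of the whole proof to lie.

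Part (b) is short: a surjection $\OO_C^{\oplus N}\twoheadrightarrow E$ restricts to $\OO_{C_i}^{\oplus N}\twoheadrightarrow E_i$, so each $E_i$ is globally generated on the smooth curve $C_i$ and has $\deg(E_i)\geq 0$ (its determinant is a globally generated line bundle). With $\Delta_{\w}(E)\geq 0$ this gives $\wdeg(E)=\Delta_{\w}(E)+\sum_i\deg(E_i)\geq 0$. For (c) suppose $\phi\colon E\to\OO_C$ is a nonzero element of $H^0(E^{*})$, let $I=\Img(\phi)\subseteq\OO_C$, and let $\bar I$ be its saturation in $\OO_C$, so that $\OO_C/\bar I$ is depth one. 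Being a nonzero quotient of the $\w$--semistable $E$, $I$ satisfies $\mu_{\w}(I)\geq\mu_{\w}(E)$, and $\wdeg(E)>0$ then yields $\wdeg(\bar I)\geq\wdeg(I)>0$. On the other hand $\wdeg(\bar I)=-\wdeg(\OO_C/\bar I)$ since $\wdeg(\OO_C)=0$; but $\OO_C/\bar I$ is a depth one quotient of $\OO_C$, so each of its restrictions is $0$ or $\OO_{C_i}$, giving $\wdeg(\OO_C/\bar I)=\Delta_{\w}(\OO_C/\bar I)\geq 0$ and hence $\wdeg(\bar I)\leq 0$. This contradiction shows $h^0(E^{*})=0$.

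Finally, the last assertion is the rank one case of (a): if $E$ is a line bundle with $\deg(E_i)=0$ for all $i$ (in particular $E=\OO_C$ or $E\in\Pic^{\underline 0}(C)$), then $E$ is locally free and each $E_i$ is a degree $0$ line bundle, which is automatically stable, so (a) applies and $E$ is $\w$--stable.
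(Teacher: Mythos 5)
Your proof is correct. Part (a) follows the paper's skeleton (pass to the torsion-free quotient, bound the degrees of its restrictions by stability of the $E_i$, then invoke goodness), but your handling of the equality case is actually sharper than the published one, and parts (b) and (c) take genuinely different routes. In (a), the paper asserts $\sum_i\deg(Q_i')>0$ on the grounds that the surjections $q_i\colon E_i\to Q_i'$ cannot all be isomorphisms; this overlooks the quotient $Q'\cong E|_D$ for a proper subcurve $D$, where every \emph{nonzero} $Q_i'$ is an isomorphic image and the sum is $0$. The paper's conclusion survives only because such a $Q'$ is not locally free on $C$, so the strict part of goodness gives $\Delta_{\w}(Q')>0$ --- which is exactly the argument you make explicit (a depth one sheaf supported on a proper subcurve has unequal branch ranks at a node joining $D$ to $D^c$, hence cannot be locally free). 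For (b), the paper deduces $\wdeg(E)\geq 0$ from the $\w$-semistability of $V\otimes\OO_C$, which rests on the $\w$-stability of $\OO_C$ established in (a); you instead restrict to each component, get $\deg(E_i)\geq 0$ from global generation on the smooth curve $C_i$, and add $\Delta_{\w}(E)\geq 0$. For (c), the paper splits into $\varphi$ injective, surjective, or neither, and again uses $\w$-stability of $\OO_C$; you saturate $I=\Img(\varphi)$ inside $\OO_C$ and apply goodness to the depth one quotient $\OO_C/\bar{I}$, whose restrictions modulo torsion are $0$ or $\OO_{C_i}$, forcing $\wdeg(\bar{I})\leq 0$ against $\wdeg(\bar{I})\geq\wdeg(I)>0$. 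The net comparison: the paper routes (b) and (c) through the stability of $\OO_C$ from (a), whereas you derive each part directly from the definition of goodness; both are valid, and yours is more self-contained and quietly repairs the one imprecise step in the paper's argument for (a).
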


\begin{proof}

(a) Let $E$ be a locally free sheaf such that $E_i$ is stable and $\deg(E_i)=0$ for all $i=1,\dots,\gamma$.  Then, by Lemma \ref{LEM:POLGEN}(b) we have $\wdeg(E) =0$. In order to prove that $E$ is $\w$-stable it is enough to show that for any proper subsheaf $F$ of $E$ we have $\wdeg(F)<0$.
\\

Let $F$ be a proper subsheaf of $E$ and let's consider the quotient $Q=E/F$. If $\wrank(Q)=0$, then $Q$ is a torsion sheaf with finite support. Then $\wdeg(Q)=\sum_{P\in \Supp(Q)}l(Q_P)>0$ and then $\wdeg(F)<0$ as claimed.
\\

Assume now that $\wrank(Q)>0$. Since $F$ is a proper subsheaf of $E$ we also have $\wrank(Q)<\wrank(E)$. We define $Q'=Q/\Tors(Q)$ which  is a depth one sheaf with $\wrank(Q')= \wrank(Q)$ and $\wdeg(Q) \geq \wdeg(Q')$.
Moreover, as $Q'$ is a quotient of $Q$ we have that $Q'$ is a   proper quotient of $E$.  So for all $i=1,\dots,\gamma$,  we have a  surjective map  
$q_i: E_i\to Q_i'$.
If $Q_i'$ is not zero, then either $q_i$ is an isomorphism (this cannot occur for all  $i$) or $Q_i'$ is a proper quotient, in this case $\deg(Q_i')>\deg(E_i)=0$ by the stability assumption on $E_i$.
Hence $\sum_i \deg(Q_i')>0$. Then, as $\w$ is a good polarization, we have 
$$\Delta_{\w}(Q')=\wdeg(Q') - \sum_i\deg(Q_i')\geq 0$$
which implies $\wdeg(Q')>0$. Then $\wdeg(Q)>0$ and we can conclude as in the previous case.
\\

From $(a)$, if $E$ is a line bundle with $\deg(E_i)=0$ for all $i=1,\dots,\gamma$, we have that $E$ is $\w$-stable. One can also prove this fact directly using Proposition \ref{DEF:equiv} by checking that $\wdeg(L|_B) > 0$ for any proper subcurve $B$.
Indeed, we have
$$\wdeg(L|_B)= \Delta_{\w}(L|_B) >0,$$
as $\w$ is a good polarization and $L|_B$ is not locally free on $C$. \\

(b) Assume that $E$ is a depth one sheaf on $C$ which is generated by $k \geq 1$ global sections. Then we have a surjective map
$V \otimes \OO_C \to E$,
where $V \subseteq H^0(E)$ is a vector space of dimension $k$.
Since by (a), $\OO_C$ is $\w$-stable, then $V \otimes \OO_C$ is $\w$-semistable.  So we have $\frac{\wdeg(E)}{\wrank(E)} \geq 0$ and then $\wdeg(E)\geq 0$.
\\

(c) Assume that $H^0(E^*)=\Hom(E,\OO_C)\neq 0$. Then, there exists a non zero homomorphism $\varphi \colon E \to \OO_C$. We will show that $\wdeg(E)<0$. If $\varphi$ is surjective or injective, we conclude by $\w$-semistability of $E$ and by $\w$-stability of $\OO_C$ (which holds by (a), since $\w$ is good) respectively. We can assume then, that $\Img(\varphi)$ is a proper subsheaf of $\OO_C$ and a proper quotient of $E$. In this case we have
$$\wdeg(E)/\wrank(E) \leq \wdeg(\Img(\varphi))/\wrank(\Img(\varphi))<0$$
where we used the $\w$-semistability of $E$ and the $\w$-stability of $\OO_C$ respectively.
\end{proof}

\begin{remark}
In point (a) of Theorem \ref{StableOC} if $E_i$ is only semistable then, with the same arguments, one obtain that $E$ is $\w$-semistable.
\end{remark}

Another interesting consequence of the previous theorem is the following corollary.

\begin{corollary}
Let $C$ be a nodal curve and  $\w$ a good polarization. 
If $\w=\w_L$ for some ample line bundle $L$, $\w$-(semi)stability is preserved by tensoring with $L$. In particular, $L$ is $\w$-stable.
\end{corollary}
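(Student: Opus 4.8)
The plan is to show that tensoring with $L$ shifts the $\w$-slope of \emph{every} depth one sheaf by one and the same constant, namely $\deg(L)$. Since $\w$-(semi)stability is merely a comparison of slopes of a sheaf with those of its subsheaves, a uniform shift is automatically harmless. The crucial point, and exactly where the hypothesis $\w=\w_L$ enters, is that this shift does not depend on the sheaf.

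First I would record the effect of $-\otimes L$ on the two invariants. As $L$ is invertible, $(E\otimes L)_i=E_i\otimes L_i$ has the same rank $r_i$ as $E_i$, whence $\wrank(E\otimes L)=\wrank(E)$ and $E\otimes L$ is again depth one. For the $\w$-degree I would invoke Lemma \ref{LEM:POLGEN}(d), which gives $\Delta_{\w}(E\otimes L)=\Delta_{\w}(E)$; combining this with $\deg(E_i\otimes L_i)=\deg(E_i)+r_id_i$, where $(d_1,\dots,d_\gamma)$ is the multidegree of $L$, yields
$$\wdeg(E\otimes L)=\wdeg(E)+\sum_{i=1}^{\gamma}r_id_i.$$
Now the definition $\w=\w_L$ means precisely $w_i=d_i/d$ with $d=\deg(L)$, so $\sum_{i=1}^\gamma r_id_i=d\sum_{i=1}^\gamma r_iw_i=d\,\wrank(E)$. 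Dividing by $\wrank(E\otimes L)=\wrank(E)$ and using the slope formula, I obtain
$$\mu_{\w}(E\otimes L)=\mu_{\w}(E)+d.$$

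Next I would use that $-\otimes L$ is an exact autoequivalence of the category of $\OO_C$-modules, with inverse $-\otimes L^{-1}$: the subsheaves of $E\otimes L$ are exactly the $F\otimes L$ with $F\subseteq E$. Applying the displayed slope formula to both $F$ and $E$, the inequality $\mu_{\w}(F\otimes L)\le\mu_{\w}(E\otimes L)$ is equivalent to $\mu_{\w}(F)\le\mu_{\w}(E)$ (and likewise with strict inequalities), since the common shift $d$ cancels. Hence $E\otimes L$ is $\w$-semistable (resp. $\w$-stable) if and only if $E$ is, which is the first assertion. For the final sentence, since $\w$ is good, Theorem \ref{StableOC} gives that $\OO_C$ is $\w$-stable; applying the preservation statement to $E=\OO_C$ and using $L\cong\OO_C\otimes L$ shows that $L$ is $\w$-stable.

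I do not expect a genuine obstacle here: the only thing to watch is that the degree shift $\sum_{i=1}^\gamma r_id_i$ be proportional to $\wrank(E)$ with a constant of proportionality independent of $E$, and this proportionality is exactly the meaning of choosing $\w$ to be the polarization induced by $L$. For an unrelated polarization the shift would depend on the multirank of $E$, the slopes of subsheaf and total sheaf would move by different amounts, and the argument would break down.
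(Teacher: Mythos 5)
Your proof is correct, but it takes a genuinely different route from the paper's. The paper disposes of the first assertion in essentially one line: from $w_i=d_i/d$ it deduces the proportionality relations $w_id_j=w_jd_i$ and then invokes Seshadri's result from \cite{Ses}, which states that these relations imply $\w_L$-(semi)stability is preserved by tensoring with $L$; the concluding step (that $\OO_C$ is $\w$-stable by Theorem \ref{StableOC} since $\w$ is good, hence $L\cong\OO_C\otimes L$ is $\w$-stable) is identical in both arguments. You instead prove the preservation statement from scratch: you combine Lemma \ref{LEM:POLGEN}(d), i.e. the invariance $\Delta_{\w}(E\otimes L)=\Delta_{\w}(E)$, with $\deg(E_i\otimes L_i)=\deg(E_i)+r_id_i$ to obtain $\wdeg(E\otimes L)=\wdeg(E)+d\,\wrank(E)$, hence the uniform slope shift $\mu_{\w}(E\otimes L)=\mu_{\w}(E)+d$, and then use that $-\otimes L$ is an exact autoequivalence identifying the subsheaves of $E\otimes L$ with the sheaves $F\otimes L$ for $F\subseteq E$. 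Your approach buys self-containedness — it reuses the paper's own machinery rather than a black-box citation — and it isolates exactly where the hypothesis $\w=\w_L$ enters: the shift $\sum_i r_id_i$ must equal $d\,\wrank(E)$ with $d$ independent of $E$, which is precisely the content of the criterion $w_id_j=w_jd_i$ that the paper verifies before citing \cite{Ses}. The paper's proof, in exchange, is shorter and anchors the statement to the standard reference where the general criterion is established.
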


\begin{proof}
Let $L$  be a line bundle which induces the polarization ${\underline w}_L$, with $L_i \in \Pic^{d_i}(C_i)$. Since  $w_i=d_i/d$ then we have
$$ w_i d_j = w_j d_i.$$
This implies, by \cite{Ses}, that ${\underline w}_L$-stability is preserved  by tensoring with $L$. In particular, since  $\OO_C$ is ${\underline w}_L$-stable by Theorem \ref{StableOC}, then $L$ is ${\underline w}_L$-stable too.
\end{proof}

%% LEMMA TOLTO (serviva?)
% \begin{lemma}
% Let ${\underline w}$ be a polarization on $C$.
% If $\OO_C$ is $\w$-stable, then for any locally free sheaf $E$ on $C$ and for any 
% proper subcurve $B$ of $C$ we have
% $\Delta_{\w}(E|_B) > 0.$
% \end{lemma}
% \begin{proof}
% Let $E$ be a locally free sheaf of rank $r$ on $C$ and $B$ a proper subcurve of $C$,  then  by Lemma  \ref{LEM:POLGEN} we have 
% $$\Delta_{\w}(E|_B)= r \Delta_{\w}(\OO_B).$$
% If $\OO_C$ is $\w$-stable, then by Remark \ref{REM:OS} we have $\wdeg(\OO_B) > 0$ for any proper subcurve $B$ of $C$. So we have
% $$ \Delta_{\w}(E|_B)= r \Delta_{\w}(\OO_B) = r \wdeg(\OO_B) > 0.$$
% \end{proof}

\subsection{Polarizations and $\w$-stability of $\OO_C$}

~\\ 
In this subsection we investigate polarized nodal curves $(C,\w)$ with $\w$-stable $\OO_C$.

\begin{lemma}
\label{LEM:BOUNDS}
Let $(C,\w)$ be a polarized nodal curve. Then $\OO_C$ is $\w$-stable if and only if \begin{equation}
\label{EQ:STABOCDELTA}
0<\Delta_{\w}(\OO_B)<\delta_B
\end{equation} for any proper subcurve $B$ of $C$. If equality holds for some subcurve $B$ then $\OO_C$ is $\w$-semistable. 
Moreover we can specialize the result in the following cases:
\begin{description}
\item [$\bullet\, p_a(C)=0$] $\OO_C$ is always $\w$-stable;
\item [$\bullet\, p_a(C)=1$] $\OO_C$ is always $\w$-semistable and it is $\w$-stable if and only if $C$ is a cycle of rational curves;
\item [$\bullet\, p_a(C)\geq 2$] $\OO_C$ is $\w$-stable if and only the conditions
\begin{equation}
\label{EQ:DEFSTARB}
(\star)_B :\qquad \frac{p_a(B) -1}{p_a(C)-1} < \wrank(\OO_B) < \frac{p_a(B)-1+\delta_B}{p_a(C) -1}
\end{equation}
hold for all proper subcurves $B$ of $C$. 
\end{description}
Actually, it is enough to check the the Inequalities \eqref{EQ:STABOCDELTA} and \eqref{EQ:DEFSTARB} only for connected subcurves.
\end{lemma}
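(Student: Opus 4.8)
The plan is to reduce everything to a single evaluation of $\Delta_{\w}(\OO_B)$ and then read off each case. First I would record two elementary facts: since $\OO_C$ has rank $1$ on every component, $\wrank(\OO_C)=\sum_i w_i=1$, hence $\wdeg(\OO_C)=\chi(\OO_C)-\chi(\OO_C)=0$; and for any proper subcurve $B$ one has $(\OO_C)_B=\OO_B$, with each restriction $(\OO_B)_i$ equal to $\OO_{C_i}$ (degree $0$) or to $0$, so that $\sum_i\deg((\OO_B)_i)=0$ and therefore $\Delta_{\w}(\OO_B)=\wdeg(\OO_B)$. With these in hand, Proposition \ref{DEF:equiv} applied to $L=\OO_C$ says precisely that $\OO_C$ is $\w$-stable (resp. $\w$-semistable) if and only if $\wdeg(\OO_B)=\Delta_{\w}(\OO_B)>0$ (resp. $\geq 0$) for every proper subcurve $B$. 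This already yields the lower bound in \eqref{EQ:STABOCDELTA}.

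For the upper bound I would establish the complementary relation $\Delta_{\w}(\OO_B)+\Delta_{\w}(\OO_{B^c})=\delta_B$. This follows from Lemma \ref{LEM:POLGEN}(f) applied to $E=\OO_C$ (using $\Delta_{\w}(\OO_C)=0$ from part (b) and $s_{p_j}=1$ at each of the $\delta_B$ nodes of $B\cap B^c$), or directly from the closed formula $\Delta_{\w}(\OO_B)=(1-p_a(B))+\wrank(\OO_B)(p_a(C)-1)$ together with \eqref{EQ:paB} and $\wrank(\OO_B)+\wrank(\OO_{B^c})=1$. Since $B^c$ is proper exactly when $B$ is, the condition ``$\Delta_{\w}(\OO_B)>0$ for all proper $B$'' is equivalent to ``$0<\Delta_{\w}(\OO_B)<\delta_B$ for all proper $B$'', which is the main equivalence; the semistable statement is the identical argument with non-strict inequalities. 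The reduction to connected subcurves is then immediate from the additivity in Lemma \ref{LEM:POLGEN}(e): writing a disconnected $B$ as $\sqcup_s B_s$ gives $\Delta_{\w}(\OO_B)=\sum_s\Delta_{\w}(\OO_{B_s})$, so positivity on all connected proper subcurves forces positivity on all proper subcurves.

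It remains to specialize, where the only input is the explicit value $\Delta_{\w}(\OO_B)=(1-p_a(B))+\wrank(\OO_B)(p_a(C)-1)$ evaluated on connected $B$. When $p_a(C)=0$ the curve is a tree of rational curves, every connected proper $B$ has $p_a(B)=0$ and $\delta_B\geq 1$, so $\Delta_{\w}(\OO_B)=1-\wrank(\OO_B)\in(0,1)$ and, as $1-\wrank(\OO_B)<1\leq\delta_B$, stability is automatic. When $p_a(C)=1$ the value collapses to $1-p_a(B)$, independent of $\w$; using \eqref{EQ:paB} one checks $0\leq p_a(B)\leq 1$ for every proper connected $B$, giving automatic semistability, while stability becomes the requirement that $p_a(B)=0$ and $\delta_B\geq 2$ for all such $B$. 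When $p_a(C)\geq 2$ one divides $0<\Delta_{\w}(\OO_B)<\delta_B$ by $p_a(C)-1>0$ to obtain exactly $(\star)_B$.

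I expect the main obstacle to be the genus-one classification: proving that ``$p_a(B)=0$ and $\delta_B\geq 2$ for every proper connected $B$'' is equivalent to $C$ being a cycle of rational curves. I would argue on the dual graph $\Gamma_C$: the condition $p_a(C_i)=0$ on each component forces all $g_i=0$, whence $b_1(\Gamma_C)=\delta-\gamma+1=1$; the condition $\delta_{C_i}\geq 2$ says every vertex has degree $\geq 2$, and a connected graph with $\delta=\gamma$ edges and all degrees $\geq 2$ must have every vertex of degree exactly $2$, hence be a single cycle. Conversely, on a cycle every proper connected subcurve is an arc with $p_a=0$ and $\delta_B=2$. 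For the failure cases I would exhibit the offending subcurve explicitly: a genus-one component gives $\Delta_{\w}(\OO_B)=0$, while a leaf of a non-cyclic graph gives $\Delta_{\w}(\OO_B)=1=\delta_B$ (equivalently $\Delta_{\w}(\OO_{B^c})=0$), in either case destroying strict stability.
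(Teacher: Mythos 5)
Your proof is correct and takes essentially the same route as the paper: Proposition \ref{DEF:equiv} together with the identity $\Delta_{\w}(\OO_B)=\wdeg(\OO_B)$ gives the lower bound, the complementary relation $\Delta_{\w}(\OO_B)+\Delta_{\w}(\OO_{B^c})=\delta_B$ (which the paper obtains by computing $\wdeg(\OO_{B^c})$ from the sequence \eqref{EXSEQ:CB}, and you obtain from Lemma \ref{LEM:POLGEN}(f) or the closed formula) gives the upper bound, and the specializations follow from $\Delta_{\w}(\OO_B)=1-p_a(B)+\wrank(\OO_B)(p_a(C)-1)$. Your genus-one classification via a leaf of the dual graph is the complementary-subcurve version of the paper's argument, which instead exhibits a proper connected subcurve containing the cycle; the two are equivalent, as you yourself note via $\Delta_{\w}(\OO_{B^c})=0$.
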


\begin{proof}
By Proposition \ref{DEF:equiv} we have that  $\OO_C$ is $\w$-stable if and only if $ \wdeg(\OO_B) > 0$ for any proper subcurve $B$ of $C$. Moreover, 
since $\wdeg(\OO_B) = \Delta_{\w}(\OO_B)$, by Lemma \ref{LEM:POLGEN}(e), it is enough to check the condition $\wdeg(\OO_B)>0$ only for connected subcurves.
\\

%Assume $p_a(C)\geq 2$.
Let $B$ be a proper subcurve of $C$ and $B^c$ its complementary curve. 
Then $\OO_B$ and $\OO_{B^c}$ are two depth one sheaves on $C$. We have 
$$\wdeg(\OO_{B})= \chi(\OO_{B}) - \wrank(\OO_{B}) \chi(\OO_C) =  1-p_a(B)- \wrank(\OO_B)\chi(\OO_C).$$
From Equation \eqref{EXSEQ:CB} we have 
$\chi(\OO_{B^c}) = \chi(\OO_C)- \chi(\OO_B) +\delta_B$, so
$$
\wdeg(\OO_{B^c})=\chi(\OO_{B^c}) - \wrank(\OO_{B^c}) \chi(\OO_C) =$$
$$ = \chi(\OO_C)  - \chi(\OO_B) + \delta_B -
(1- \wrank(\OO_B))\chi(\OO_C)=\wrank(\OO_B)\chi(\OO_C) + p_a(B) -1 + \delta_B.$$ 
Hence $\OO_C$ is $\w$-stable if and only if  both the above values are  strictly positive, we obtain Inequality \eqref{EQ:STABOCDELTA}. If $p_a(C) \geq 2$, solving the inequalities we get  condition $(\star)_B$.
\\

Assume now $p_a(C)=0$. Then $C$ is a curve of compact type whose components are rational. Then, if $B$ is a proper connected subcurve of $C$, we have that $B$ is also of compact type. In particular $p_a(B)=0$ too. By Inequality \eqref{EQ:STABOCDELTA} we get $1 - \delta_B < \wrank(\OO_B) < 1$, so $\OO_C$ is $\w$-stable.
\\

Assume now $p_a(C)=1$. Then Inequality \eqref{EQ:STABOCDELTA} is equivalent to $1 - \delta_B < p_a(B) < 1$. Since $p_a(B)\leq 1$ and $p_a(B)\geq 1-\delta_B$ we have that $\OO_C$ is always $\w$-semistable. Now we investigate the $\w$-stability of $\OO_C$.
As $p_a(C)=1$, we have either $C$ is of compact type whose components consist of an elliptic curve $C_1$ and $\gamma-1$ rational curves or the dual graph has a single cycle and all components are rationals. 
In the first case, $p_a(C_1)=1$ so $\OO_C$ is never $\w$-stable. In the second case, if we can find a proper connected subcurve $B$ of $C$ which contains a cycle then $p_a(B)=1$ and $\OO_C$ is never $\w$-stable. This happens exactly when $C$ is not a cycle. If $C$ is a cycle and $B$ is a proper connected subcurve, then $\delta_B=2$ and $p_a(B)=0$ so $\OO_C$ is $\w$-stable.
\end{proof}

\begin{remark}
\label{REM:cycle}
Let $C$ be a nodal curve with $p_a(C)=1$ which is not a cycle. Then good polarizations do not exist on $C$.  
\end{remark}

\begin{remark}
\label{REM:TEIXIDOR}
Assume that $(C,\w)$ is a polarized nodal curve of compact type. We can translate the conditions of $\w$-stability for $\OO_C$, given by Teixidor i Bigas in \cite{Tei91}, using our notation as follows:
$\OO_C$ is $\w$-stable if and only if $0<\Delta_{\w}(\OO_{A_i})<1$ for a suitable family of connected subcurves $A_i \subset C$. 
\end{remark}

\begin{corollary}
\label{COR:necbounds}
Let ${\underline w}$ be a good polarization on a nodal curve $C$ with $p_a(C) \geq 2$.  Then $\w$ satisfies $(\star)_B$ for all $B$ subcurve of $C$. In particular, we have
$$\frac{g_i-1}{p_a(C)-1} < w_i < \frac{g_i-1+\delta_i}{p_a(C)-1}.$$
\end{corollary}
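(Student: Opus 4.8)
The plan is to chain together the two results just established. First I would invoke Theorem~\ref{StableOC}: since $\OO_C \in \Pic^{\underline 0}(C)$, goodness of $\w$ immediately yields that $\OO_C$ is $\w$-stable. Then I would feed this into Lemma~\ref{LEM:BOUNDS}, whose $p_a(C)\geq 2$ case states precisely that $\w$-stability of $\OO_C$ is equivalent to the validity of $(\star)_B$ for every proper subcurve $B$. Composing these two implications gives $(\star)_B$ for all proper $B$, which is the first assertion.

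Alternatively, and perhaps more transparently, one can argue directly through the function $\Delta_{\w}$. For a proper subcurve $B$ the sheaves $\OO_B$ and $\OO_{B^c}$ are depth one on $C$ and neither is locally free (each has nonzero residual rank at the nodes of $B\cap B^c$), so the defining property of a good polarization forces $\Delta_{\w}(\OO_B)>0$ and $\Delta_{\w}(\OO_{B^c})>0$. Applying Lemma~\ref{LEM:POLGEN}(f) to the locally free sheaf $E=\OO_C$, for which $\Delta_{\w}(\OO_C)=0$ and $s_{p_j}=1$ at every node, gives $\Delta_{\w}(\OO_B)+\Delta_{\w}(\OO_{B^c})=\delta_B$. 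Combining the two positivity statements then yields $0<\Delta_{\w}(\OO_B)<\delta_B$, which is exactly Inequality~\eqref{EQ:STABOCDELTA}; solving it as in Lemma~\ref{LEM:BOUNDS} recovers $(\star)_B$.

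For the explicit bound on $w_i$ I would specialize to $B=C_i$, which is a proper subcurve because $C$ is reducible. Here $p_a(C_i)=g_i$, $\delta_{C_i}=\delta_i$, and $\wrank(\OO_{C_i})=w_i$, since $\OO_{C_i}$ has rank $1$ on $C_i$ and rank $0$ on the other components; substituting these into $(\star)_{C_i}$ reads off the displayed inequality. I do not expect any real obstacle here, as the statement is an essentially immediate corollary: the only points requiring a little care are checking that $\OO_B$ is genuinely not locally free (so that goodness delivers strict positivity rather than merely $\Delta_{\w}(\OO_B)\geq 0$) and the elementary bookkeeping in the specialization $B=C_i$.
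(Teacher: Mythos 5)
Your proposal is correct, and your first route is precisely the paper's own (implicit) argument: the corollary is stated without proof, being the immediate composition of Theorem~\ref{StableOC} (goodness of $\w$ gives $\w$-stability of $\OO_C$) with Lemma~\ref{LEM:BOUNDS} (for $p_a(C)\geq 2$, $\w$-stability of $\OO_C$ is equivalent to $(\star)_B$ for all proper subcurves $B$), followed by the specialization $B=C_i$. Your alternative direct argument --- obtaining $0<\Delta_{\w}(\OO_B)<\delta_B$ from goodness applied to the non-locally-free sheaves $\OO_B$, $\OO_{B^c}$ together with the identity $\Delta_{\w}(\OO_B)+\Delta_{\w}(\OO_{B^c})=\delta_B$ from Lemma~\ref{LEM:POLGEN}(f) --- is also sound and bypasses the stability machinery entirely, though it is not needed.
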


An interesting question is then the following:

\begin{question}
\label{QUE:GOODANDOCSTAB}
Are all polarizations for which $\OO_C$ is $\w$-stable also good?
\end{question}

We will give a complete answer for curves of compact type in Section \ref{SEC:3}.

\subsection{Polarizations and balanced line bundles}
\label{SUBSEC:BALANCED}

~\\ 
In this subsection we deal with polarized curves $(C,\w_L)$ where $\w_L$ is induced by a line bundle $L$. We highlight the relation between the $\w_L$-stability of $\OO_C$ and a particular class of line bundles: balanced line bundles (for  details one can see \cite{Cap1,Cap}).

\begin{definition}
Let $C$ be a quasistable curve of  arithmetic genus $p_a(C) \geq 2$. A line bundle  $L$ on $C$ is said to be {\bf balanced}  if the following  properties hold:
\hfill\par
\begin{enumerate}
\item{} for every exceptional component $E$ of $C$  we have ${\deg}_E( L )= 1$;
\item{} for any  proper subcurve $B$  we have
\begin{equation}
\label{EQ:BALANCED}
\left\vert {\deg}_B(L) - \frac{\deg(L)}{2p_a(C)-2} {\deg}_B(\omega_C) \right\vert \leq \frac{1}{2}\delta_B.
\end{equation}
\end{enumerate}
$L$ is said to be {\bf strictly balanced} if 
 the  inequality  is strict  for every subcurve $B$ such that $B  \cap  B^c $ is not contained in the exceptional locus of $C$.
\end{definition}

\begin{proposition}
Let $C$ be a quasistable nodal curve with $p_a(C) \geq 2$.  
Let $L \in \Pic^d(C)$ be an ample line bundle and 
let $ \w = {\underline w}_L$  be the polarization induced by $L$. 
\begin{enumerate}[(a)]
\item{} If $d \geq  p_a(C) -1$ and $L$ is balanced, then 
$\OO_C$ is $\w$-semistable and it is $\w$-stable when $d > p_a(C) -1$;
\item{} if $d \leq  p_a(C) -1$ and $\OO_C$ is $\w$-stable then  $C$ is stable and $L$ is  strictly balanced. 
\end{enumerate}
\end{proposition}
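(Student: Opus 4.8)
The plan is to reduce everything to one translation identity relating the $\w$-degree of $\OO_B$ to the quantity measured by the balancedness condition. Writing $d_B:=\deg_B(L)=\sum_{C_i\subseteq B}d_i$ and
$$\beta_B:=d_B-\frac{d}{2p_a(C)-2}\deg_B(\omega_C),$$
so that $L$ is balanced precisely when $|\beta_B|\le\tfrac12\delta_B$ for every proper subcurve $B$ (Equation \eqref{EQ:BALANCED}), I claim that
$$\wdeg(\OO_B)=\frac{\delta_B}{2}+\frac{p_a(C)-1}{d}\,\beta_B.$$
To prove this I would start from $\wdeg(\OO_B)=\chi(\OO_B)-\wrank(\OO_B)\chi(\OO_C)=1-p_a(B)-\frac{d_B}{d}(1-p_a(C))$, using $\wrank(\OO_B)=\sum_{C_i\subseteq B}w_i=d_B/d$ because $\w=\w_L$, then substitute $\deg_B(\omega_C)=2p_a(B)-2+\delta_B$ and $2p_a(C)-2=\deg(\omega_C)$ and solve for $d_B$ in terms of $\beta_B$; the $p_a(B)-1$ terms cancel and the identity drops out. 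I will combine it with the criterion (Proposition \ref{DEF:equiv} together with $\wdeg(\OO_C)=0$, or Lemma \ref{LEM:BOUNDS}): since $\OO_C$ has rank $1$ on every component, it is $\w$-semistable iff $\wdeg(\OO_B)\ge0$ for all proper $B$ and $\w$-stable iff the inequality is strict, and it suffices to test connected $B$, for which $\delta_B\ge1$.

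For part (a), balancedness gives $\beta_B\ge-\tfrac12\delta_B$, so the identity yields $\wdeg(\OO_B)\ge\tfrac{\delta_B}{2}\bigl(1-\tfrac{p_a(C)-1}{d}\bigr)$. If $d\ge p_a(C)-1$ the parenthesis is $\ge0$, whence $\wdeg(\OO_B)\ge0$ and $\OO_C$ is $\w$-semistable; if $d>p_a(C)-1$ the parenthesis is $>0$ and $\delta_B\ge1$, so $\wdeg(\OO_B)>0$ and $\OO_C$ is $\w$-stable.

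For part (b), I would run the identity in reverse. Stability of $\OO_C$ gives $0<\wdeg(\OO_B)<\delta_B$ for every proper $B$ (Lemma \ref{LEM:BOUNDS}), and inserting the identity turns these two inequalities into $|\beta_B|<\frac{d}{2(p_a(C)-1)}\delta_B\le\tfrac12\delta_B$, using $d\le p_a(C)-1$; this is exactly the strict inequality of \eqref{EQ:BALANCED}. It then remains to show that the quasistable curve $C$ is in fact stable, i.e. has no exceptional component. If $E$ were exceptional, then $p_a(E)=0$, $\delta_E=2$ and $d_E\ge1$ by ampleness, so the identity (or a direct computation) gives $\wdeg(\OO_E)=1+\frac{d_E(p_a(C)-1)}{d}\ge1+\frac{p_a(C)-1}{d}\ge2=\delta_E$, contradicting the stability bound $\wdeg(\OO_E)<\delta_E$. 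Hence $C$ is stable; its exceptional locus is empty, so condition (1) in the definition of balanced is vacuous and the strict inequalities already established show that $L$ is strictly balanced.

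The only real content is the translation identity; once it is in hand both parts are one-line estimates. The point to get right in (b) is that the upper bound $\wdeg(\OO_B)<\delta_B$ is nothing but stability applied to the complementary curve $B^c$ (via $\wdeg(\OO_B)+\wdeg(\OO_{B^c})=\delta_B$), and it is this upper bound—specialized to an exceptional $E$—that simultaneously forces the balanced inequality to be strict and excludes exceptional components; in each direction the size hypothesis on $d$ relative to $p_a(C)-1$ is used in exactly one place.
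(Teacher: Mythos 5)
Your proposal is correct and follows essentially the same route as the paper: your translation identity $\wdeg(\OO_B)=\frac{\delta_B}{2}+\frac{p_a(C)-1}{d}\beta_B$ is exactly the signed form of the paper's key computation \eqref{mainEQ} relating the balancedness defect to condition $(\star)_B$, and both parts then reduce to the same one-line estimates combined with Proposition \ref{DEF:equiv} and Lemma \ref{LEM:BOUNDS}. The only cosmetic difference is in (b), where the paper deduces $\delta_R\geq 3$ for every rational component directly from $(\star)_R$, while you rule out exceptional components by contradiction; the underlying inequality is the same.
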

\begin{proof}
Let $L \in \Pic^d(C)$ be an ample line bundle. Then $d_i = \deg(L_i) > 0$ for all $i$ and $d = \sum_{i=1}^{\gamma}d_i$.
As $\w$ is induced by $L$,  we have $w_i = \frac{d_i}{d}$, for all $i=1,\dots \gamma$.
Let $B$ be a subcurve of $C$. Then $B= \bigcup_{k=1}^{b}C_{i_k}$. 
Since $L|_B$ is a line bundle on $B$,  we have:
$$\deg_B(L) = \sum_{k=1}^{b}d_{i_k}=  \sum_{k=1}^bw_{i_k}d =  d \wrank(\OO_B),$$
moreover we recall that 
$$\deg_B(\omega_C) = 2p_a(B)-2 + \delta_B.$$
 We have: 
\begin{equation}
\label{mainEQ}
\left\vert \deg_B(L) - \frac{d}{2p_a(C)-2} \deg_B(\omega_C) \right\vert = \left\vert   d \wrank(\OO_B) -  \frac{d}{p_a(C) -1} (p_a(B) -1 + \delta_B/2) \right\vert =  
\end{equation}
$$ = \frac{d}{p_a(C) -1} \left\vert (p_a(C)-1) \wrank(\OO_B) - (p_a(B) -1 +\delta_B/2) \right\vert.$$
Note that condition $(\star)_B$ in Lemma \ref{LEM:BOUNDS} can be also written as
$$p_a(B) -1 < (p_a(C) -1) \wrank (\OO_B) < p_a(B) -1 + \delta_B, $$
which is equivalent to
$$ \left\vert (p_a(C)-1) \wrank(\OO_B) - (p_a(B) -1 + \delta_B/2) \right\vert < \delta_B /2.$$

(a) Let $d \geq p_a(C) -1$  and assume that $L$ is balanced.
Then Equations \eqref{EQ:BALANCED} and \eqref{mainEQ} imply
$$\left\vert (p_a(C) -1)  \wrank(\OO_B) - (p_a(B) -1
+ \delta_B /2) \right\vert  \leq  \frac{\delta_B}{2}  \frac{p_a(C) -1}{d}.$$
If  $d > p_a(C)-1$, we get
$$ \left\vert (p_a(C) -1) \wrank(\OO_B)- p_a(B) +1 - \delta_B /2 \right\vert < \delta_B /2, $$
which is equivalent to  $(\star)_B$.  This implies that $\OO_C$ is $\w$-stable. 
If $d= p_a(C)-1$,  we get  $$ \left\vert (p_a(C) -1) \wrank(\OO_B)- p_a(B) +1 - \delta_B /2 \right\vert \leq  \delta_B /2,$$
so we can conclude that  $\OO_C$ is $\w$-semistable.
\\

(b)  Let $d \leq p_a(C) -1$ and assume  that $\OO_C$ is $\w$-stable. Then  ${\underline w}$ satisfies $(\star)_B$ for all subcurve $B$.  Let $R$ be a rational component of $C$,  since  $(\star)_{R}$ holds, we have:
$$-1 < (p_a(C) -1) w_R < \delta_R -1.$$
We recall that  $w_R = \frac{d_R}{d}$ and $d_R \geq 1$ since $L$ is ample.  So  we have:
$$  1 \leq d_R < \frac{d}{p_a(C)-1}(\delta_R -1),$$
as  $d \leq  p_a(C) -1$ we  obtain
$1 \leq  d_R  < \delta_R -1$. This implies $\delta_R \geq 3$, so $C$ is a stable curve. 
\hfill\par
Now we prove that $L$ is strictly balanced. Since $d \leq  p_a(C) -1$ we have
 $$ \left\vert \deg_B(L) - \frac{d}{2p_a(C)-2} \deg_B({{\omega}_C}) \right\vert < \frac{d}{(p_a(C) -1)}  \frac{\delta_B}{2} \leq  \frac{\delta_B}{2}
$$
by Inequality \eqref{mainEQ}. This proves that $L$ is strictly balanced. 
 \\
 
 \end{proof}
 
\begin{corollary}
\label{COR:balanced}
Let $C$ be a stable nodal curve with $p_a(C) \geq 2$. Let $L$ be an ample  line bundle  of degree $p_a(C)-1$ and $\w_L$ be the polarization induced by $L$ on $C$.
Then $L$ is strictly balanced if and only if $\OO_C$ is $\w_L$-stable. 
\end{corollary}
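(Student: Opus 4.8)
The plan is to read off both implications from the preceding Proposition, specialized to the degree $d=\deg(L)=p_a(C)-1$, with one additional observation needed to sharpen a semistability conclusion into stability. Throughout I would exploit the fact that $C$ is stable, so it has no exceptional components and its exceptional locus is empty.

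First I would dispatch the implication from $\w_L$-stability to strict balancedness. Since $d=p_a(C)-1$ satisfies $d\leq p_a(C)-1$, and a stable curve is in particular quasistable, part (b) of the Proposition applies verbatim to the polarized curve $(C,\w_L)$ and yields at once that $C$ is stable (already assumed) and that $L$ is strictly balanced. This direction requires no further argument.

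For the converse, suppose $L$ is strictly balanced; I want to conclude that $\OO_C$ is $\w_L$-stable. Here part (a) of the Proposition is not directly sufficient: at the boundary value $d=p_a(C)-1$ the factor $\frac{p_a(C)-1}{d}$ in Equation \eqref{mainEQ} equals $1$, so the mere balanced condition \eqref{EQ:BALANCED} degenerates into the non-strict bound $\leq \delta_B/2$ and only delivers $\w_L$-semistability. The point I would stress is that the emptiness of the exceptional locus of the stable curve $C$ means that for every proper subcurve $B$ the intersection $B\cap B^c$ (nonempty by connectedness of $C$) is automatically not contained in the exceptional locus; hence the \emph{strictly} balanced hypothesis forces \eqref{EQ:BALANCED} to be a strict inequality for every proper subcurve $B$. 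Substituting this strict inequality into Equation \eqref{mainEQ} with $d=p_a(C)-1$ produces exactly
$$\left\vert (p_a(C)-1)\wrank(\OO_B)-\bigl(p_a(B)-1+\delta_B/2\bigr)\right\vert < \frac{\delta_B}{2}$$
for all proper subcurves $B$, which is precisely condition $(\star)_B$. Invoking Lemma \ref{LEM:BOUNDS} (where it suffices to verify $(\star)_B$ on connected subcurves) then gives the $\w_L$-stability of $\OO_C$.

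The main obstacle, and the only nonformal step, is this upgrade from semistability to stability in the converse direction. It rests entirely on the stability of $C$: it is exactly the vanishing of the exceptional locus that converts the word \emph{strictly} in the hypothesis into a genuinely strict inequality for \emph{every} proper subcurve, thereby matching the strict form of $(\star)_B$ rather than the weaker boundary version that part (a) alone would supply.
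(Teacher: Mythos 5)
Your proof is correct and takes essentially the same route as the paper: both arguments rest on the identity \eqref{mainEQ} evaluated at $d=p_a(C)-1$, the emptiness of the exceptional locus of a stable curve, and the characterization of $\w$-stability of $\OO_C$ via $(\star)_B$ in Lemma \ref{LEM:BOUNDS}. The paper's proof simply notes that under these hypotheses the strict form of \eqref{EQ:BALANCED} is literally equivalent to $(\star)_B$, which is exactly your reasoning in both directions (you merely route the forward implication through part (b) of the preceding Proposition, whose proof is the same computation).
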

\begin{proof}
Since $C$ is stable,  the exceptional locus of $C$  is empty. 
Moreover, as  we assumed $\deg(L)= p_a(C) -1$, Condition \eqref{EQ:BALANCED} is equivalent to $(\star_B)$.
This implies the claim. 
\end{proof}

\section{Good polarizations and $\w$-Stability of $\OO_C$}
\label{SEC:3}

Let $(C,\w)$ be  a polarized nodal curve. In this section we will obtain sufficient conditions for a polarization $\w$ to be good (see Theorem \ref{THM:DELTAGENERAL}). Recall that, by Corollary \ref{COR:necbounds}, any good polarization satisfies properties $(\star)_B$ of  Lemma \ref{LEM:BOUNDS}, or equivalently, is such that $\OO_C$ is $\w$-stable. 
We will show that for curves of compact type, $\w$-stability of $\OO_C$ is also sufficient in order to have $\w$ good (see Theorem \ref{THM:COMPACTTYPE}).
\\

With this aim, we will give a description of $\Delta_{\w}(E)$ as a function depending only on the residual ranks and on the contribution of the non-free part of the stalks of $E$ at nodes of $C$. We will get this description by considering paths on the dual graph of $C$, as follows.  
\\

Assume that $C$ has $\gamma$ irreducible components and $\delta$ nodes. 
Let   $C_1,\dots, C_{\gamma}$ denote the   smooth components of $C$ and $p_1,\dots, p_{\delta}$ denote the nodes of $C$.  
Let $\Gamma_C=(\cV,\cE)$ be the dual graph of $C$. It is a finite graph with   $\gamma=\#\cV$ vertices and $\delta=\#\cE$ edges.
Since $C$ is connected the same holds for $\Gamma_C$. 

\begin{notation} 
\label{NOT:GAMMADUAL}
\hfill\par\noindent
Given a path $\gamma$ in $\Gamma_C$, we will denote by $L(\gamma) \in  \mathbb{N}$ the {\bf length} of $\gamma$ i.e. the number of edges which are part of  $\gamma$. A path has length $0$ if and only if it is the trivial path. A path joining $C_i$ with $C_j$ is  said {\bf minimal} if it has minimal length among all the paths joining $C_i$ and $C_j$. As the graph  $\Gamma_C$ is connected and finite, minimal paths exist for each pair of vertices.
Two edges of $\Gamma_C$ are  said {\bf equivalent} if and only if the corresponding nodes lie on the same two components, i.e. if they connect the same vertices of $\Gamma_C$. 
\\ 

A {\bf marking} $\cM$ is a subset of $\cE$ which is a transversal for the above equivalence relation, i.e. every edge of $\Gamma_C$ is equivalent to exactly one edge in $\cM$. The subgraph $\Gamma_C^{\cM}=(\cV,\cM)$ has the same vertices of $\Gamma_C$, is connected and it is also simple (i.e. for each pair of vertices there is at most one edge). 
\\

For our construction we will need to fix arbitrarily a component of $C$. For simplicity, we will use $C_\gamma$. We define $\cP$ as any set satisfying the following properties: 
\begin{enumerate}
\item{} the elements of $\cP$ are minimal paths in $\Gamma_C^{\cM}$ connecting a vertex $C_i$ to $C_\gamma$;
\item{} for each $C_i$ there exists exactly one path in $\cP$ starting from $C_i$, which we will be denoted by $\gamma_i$;
\item{} if $\gamma_i\in\cP$ and $C_j$ is a vertex on $\gamma_i$, then $\gamma_j$ is a restriction of $\gamma_i$.
\end{enumerate}
We will call $\cP$ a {\bf set of minimal paths} of  $\Gamma_C$.  In order to simplify the notations, if $C_j\in \cV, p_k\in \cE$ we will write $p_k \subseteq \gamma_i$ if and only if $p_k$ is an edge on $\gamma_i$ and $C_j \in \gamma_i$ if and only if $C_j$ is a vertex on $\gamma_i$. We set $\cM'$ the subset of $\cM$ which consists of all the edges on some path in $\cP$.
\\

If $\gamma_i\in \cP$ and $p_j\subseteq \gamma_i$ is a node in $C_{k_1} \cap C_{k_2}$, we say that {\bf $C_{k_1}$ precedes $C_{k_2}$ with respect to $\gamma_i$} if and only if, compared to $C_{k_2}$, $C_{k_1}$ is closer to $C_{i}$ along the path $\gamma_i$. 
\end{notation}

Indeed, this does not depend on the choice of $\gamma_i\in \cP$ passing through $p_j$ as the next lemma shows.

\begin{lemma}
\label{LEM:GOODDEFPRECEDE}
Assume that $\gamma_{i_1}$ and  $\gamma_{i_2}$ are two minimal paths ending in  $C_{\gamma}$, which pass through $p_j\in \cM$ with $p_j\in C_{k_1}\cap C_{k_2}$. Then the curve $C_{k_1}$ precedes $C_{k_2}$ with respect to $\gamma_{i_1}$ if and only if the same happens with respect to $\gamma_{i_2}$.
\end{lemma}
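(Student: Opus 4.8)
The plan is to exploit the minimality of the paths $\gamma_{i_1}$ and $\gamma_{i_2}$ together with the fact that all vertices in the setup are measured by their graph distance to the fixed root vertex $C_\gamma$. I would introduce the distance function $d(C_k) := d_{C_\gamma}(C_k)$ in the subgraph $\Gamma_C^{\cM}$, i.e. the length of a minimal path from $C_k$ to $C_\gamma$. The key observation is that ``$C_{k_1}$ precedes $C_{k_2}$ with respect to $\gamma_i$'' should be equivalent to the intrinsic, path-independent statement that $C_{k_1}$ is farther from $C_\gamma$ than $C_{k_2}$, i.e. $d(C_{k_1}) = d(C_{k_2}) + 1$. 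Once this equivalence is established, the lemma follows immediately because the right-hand side makes no reference to the chosen path.

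First I would argue that along any minimal path $\gamma_i$ ending at $C_\gamma$, consecutive vertices have distances to $C_\gamma$ that decrease by exactly $1$ at each step. Indeed, if $\gamma_i$ is a minimal path from $C_i$ to $C_\gamma$ and $C_{k_1}, C_{k_2}$ are consecutive vertices on it with $C_{k_1}$ closer to $C_i$, then the portion of $\gamma_i$ from $C_{k_2}$ to $C_\gamma$ is itself a minimal path (any shortcut would contradict minimality of $\gamma_i$), so its length is $d(C_{k_2})$; similarly the portion from $C_{k_1}$ to $C_\gamma$ has length $d(C_{k_1})$, and these two lengths differ by the single edge $p_j$, giving $d(C_{k_1}) = d(C_{k_2}) + 1$. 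This uses only the standard fact that subpaths of geodesics are geodesics. Here I note that condition (3) in Notation \ref{NOT:GAMMADUAL} guarantees that the tail of $\gamma_i$ past any vertex is exactly the canonical path $\gamma_{k_2}$, which streamlines the claim that the subpath is minimal.

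Next I would turn this into the desired characterization. If $C_{k_1}$ precedes $C_{k_2}$ with respect to $\gamma_{i_1}$, then by the step above $d(C_{k_1}) = d(C_{k_2}) + 1$; and conversely, since $p_j$ lies on $\gamma_{i_2}$ joining $C_{k_1}$ and $C_{k_2}$, exactly one of the two precedence orderings holds for $\gamma_{i_2}$, and the corresponding distance relation must read $d(C_{k_1}) = d(C_{k_2}) + 1$ as well. Because the distances $d(C_{k_1})$ and $d(C_{k_2})$ are fixed numbers depending only on the graph and the root $C_\gamma$ — not on any path — the ordering forced by $\gamma_{i_1}$ coincides with the one forced by $\gamma_{i_2}$. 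This yields the stated independence.

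The step I expect to be the main (though modest) obstacle is justifying cleanly that the two endpoints of the single edge $p_j$ cannot have equal distance to $C_\gamma$ when $p_j$ lies on a minimal path — i.e. ruling out $d(C_{k_1}) = d(C_{k_2})$ — since a priori an edge of $\Gamma_C^{\cM}$ could connect two vertices equidistant from the root. The resolution is precisely that $p_j$ is an edge of a \emph{minimal} path: on such a path the distance strictly decreases along each edge toward $C_\gamma$, so the two endpoints of $p_j$ have distances differing by exactly $1$. I would phrase this carefully so that the conclusion ``precedence $\iff$ larger distance'' holds for \emph{any} minimal path through $p_j$, which is exactly what makes the notion well-defined and completes the proof.
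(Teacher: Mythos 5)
Your proof is correct and is essentially the paper's argument in different clothing: the paper derives a direct contradiction $L(\gamma_{i_1}')=L(\gamma_{i_2}')-1$ and $L(\gamma_{i_2}')=L(\gamma_{i_1}')-1$ from the fact that restrictions of minimal paths are minimal and minimal paths with the same endpoints have equal length, which is exactly your observation that precedence forces the path-independent relation $d(C_{k_1})=d(C_{k_2})+1$. Your distance-function phrasing is a clean repackaging of the same two ingredients, so there is nothing to flag.
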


\begin{proof}
Assume, by contradiction, that $C_{k_1}$ precedes $C_{k_2}$ with respect to $\gamma_{i_1}$ and follows $C_{k_2}$ with respect to $\gamma_{i_2}$.
For all  $l=1,2$,  we denote by $\gamma_{i_l}'$ the path obtained by $\gamma_{i_l}$ by removing all the edges before $p_j$ and by $\gamma_{i_l}''$ the path obtained by $\gamma_{i_l}'$ were we have removed also $p_j$.
Hence, $\gamma_{i_1}'$ and $\gamma_{i_2}''$ are both minimal paths (since minimality is preserved by restriction) which start from $C_{k_1}$ and end in $C_\gamma$. Similarly, $\gamma_{i_2}'$ and $\gamma_{i_1}''$ are both minimal paths connecting $C_{k_2}$ and $C_\gamma$.
As two minimal path joining the same vertices must have the same length we have
$$
\begin{cases}
L(\gamma_{i_1}')=L(\gamma_{i_2}'')=L(\gamma_{i_2}')-1\\
L(\gamma_{i_2}')=L(\gamma_{i_1}'')=L(\gamma_{i_1}')-1
\end{cases}
$$
which is clearly impossible.
\end{proof}

\begin{definition}
Let $p_j \in \cE$ corresponding to a node in $C_{k_1}\cap C_{k_2}$. If $p_j$ is equivalent to an edge which is on a path $\gamma_i\in \cP$ we say that {\bf $C_{k_1}$ precedes $C_{k_2}$} if and only if $C_{k_1}$ precedes $C_{k_2}$ with respect to $\gamma_i$. If $p_j$ is not equivalent to any edge on a path $\gamma_i\in \cP$, we choose arbitrarily one of the two possible cases ($C_{k_1}$ precedes $C_{k_2}$ or $C_{k_2}$ precedes $C_{k_1}$) making the same choice for equivalent edges.
\end{definition}

Lemma \ref{LEM:GOODDEFPRECEDE} ensures that the above definition is well posed. This gives the structure of oriented graph to $\Gamma_C$ and to its subgraph $\Gamma_C^{\cM}$.

\begin{notation}
\label{NOT:ab}
\hfill\par\noindent
Let $E$ be a depth one sheaf on $C$. Let $p_j$ be a node with $p_j \in C_{k_1}\cap C_{k_2}$. Denote by $q_{j,k_1}$ and $q_{j,k_2}$ the points of $C_{k_1}$ and $C_{k_2}$ respectively on the normalization of $C$ which are glued together in order to obtain $p_j$.
We recall that we have integers $s_j,a_{j,k_1}$ and $a_{j,k_2}$ such that
$$E_{p_j}=\OO_{p_j}^{s_j}\oplus\OO_{q_{j,k_1}}^{a_{j,k_1}}\oplus 
\OO_{q_{j,k_2}}^{a_{j,k_2}},$$
and satisfying $r_{k_l}=s_j+a_{j,k_l}$ for $l=1,2$. We set 
\begin{equation}
\label{EQ:ab}   
a_j:=a_{j,k_1} \mbox{ and }  b_j:=a_{j,k_2} \Longleftrightarrow C_{k_1} \mbox{ precedes } C_{k_2}
\end{equation}
and the opposite in the other case. In particular, we have that  $a_j+b_j=t_{p_j}$.
\end{notation}

\begin{lemma}
\label{LEM:PATH}
Let $E$ be any depth one sheaf on $C$. Then
\begin{enumerate}[(a)]
\item if $p_l$ and $p_j$ are equivalent edges,  we have $b_l-a_l=b_j-a_j$;
\item if $\gamma_i\in \cP$ then we have $\sum_{p_j\subseteq \gamma_i}(b_j-a_j)=r_{\gamma}-r_i$.
\end{enumerate}
\end{lemma}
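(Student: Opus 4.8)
The plan is to reduce both parts to one elementary identity expressing the quantity $b_j - a_j$ at a node in terms of the ranks of $E$ on the two adjacent components. Fix a node $p_j \in C_{k_1}\cap C_{k_2}$ and suppose, after the orientation of the edge, that $C_{k_1}$ precedes $C_{k_2}$. From the defining relations $r_{k_l}=s_j+a_{j,k_l}$ of Notation \ref{NOT:ab}, together with the convention \eqref{EQ:ab} which in this case sets $a_j=a_{j,k_1}$ and $b_j=a_{j,k_2}$, I would immediately obtain $a_j=r_{k_1}-s_j$ and $b_j=r_{k_2}-s_j$, whence
$$ b_j-a_j = r_{k_2}-r_{k_1}, $$
i.e. the difference equals the rank on the following component minus the rank on the preceding one. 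This single observation carries both statements, and the $s_j$ cancels precisely because the two summands $a_{j,k_1}, a_{j,k_2}$ share the same free rank.

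For part (a) I would note that two equivalent edges $p_l$ and $p_j$ join, by definition of the equivalence relation, the same pair of vertices $C_{k_1}, C_{k_2}$, and that the precedence relation was set up to make the same choice on equivalent edges (this being well posed by Lemma \ref{LEM:GOODDEFPRECEDE}); say $C_{k_1}$ precedes $C_{k_2}$ for both. Applying the identity above to each of the two nodes then gives
$$ b_l-a_l = r_{k_2}-r_{k_1} = b_j-a_j, $$
since $r_{k_1}$ and $r_{k_2}$ are attributes of the components, not of the individual nodes. This settles (a) with no real computation.

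For part (b) I would parametrize the minimal path $\gamma_i\in\cP$ by its successive vertices $C_i=C_{m_0}, C_{m_1},\dots,C_{m_L}=C_\gamma$, where $L=L(\gamma_i)$, so that its $t$-th edge is the node joining $C_{m_{t-1}}$ and $C_{m_t}$. By the very meaning of ``precedes'' along $\gamma_i$, the earlier vertex $C_{m_{t-1}}$ precedes $C_{m_t}$, so that edge contributes $r_{m_t}-r_{m_{t-1}}$ by the identity. Summing over the edges of $\gamma_i$ and telescoping,
$$ \sum_{p_j\subseteq\gamma_i}(b_j-a_j) = \sum_{t=1}^{L}\bigl(r_{m_t}-r_{m_{t-1}}\bigr) = r_{m_L}-r_{m_0} = r_\gamma-r_i, $$
which is the asserted formula.

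The computations are routine; the only point needing care is the orientation bookkeeping — namely that for every edge of $\gamma_i$ the preceding vertex is the one nearer $C_i$, so that the telescoping runs in the correct direction, and that for equivalent edges the sign of $b-a$ is unambiguous. This is exactly what the well-posedness of ``precedes'' (Lemma \ref{LEM:GOODDEFPRECEDE}) and the consistent extension of the orientation to all edges guarantee, so no genuine obstacle arises beyond keeping the indices straight.
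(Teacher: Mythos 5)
Your proof is correct and follows essentially the same route as the paper's: both rest on the rank relations $r_{k_l}=s_j+a_{j,k_l}$ at a node (giving $b_j-a_j=r_{k_2}-r_{k_1}$, with the $s_j$ cancelling) for part (a), and your telescoping sum in part (b) is just the unrolled form of the paper's induction on the length of $\gamma_i$, which peels off the first edge and applies the same identity. No gaps; the orientation bookkeeping you flag is exactly what Lemma \ref{LEM:GOODDEFPRECEDE} and the definition of precedence guarantee, as in the paper.
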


\begin{proof}
(a) Let $E$ be a depth one sheaf. Let $p_j$ and $p_l$ be two equivalent edges.  Then $p_j,p_l\in C_{k_1}\cap C_{k_2}$. Without loss of generality we can assume that $C_{k_1}$ precedes $C_{k_2}$. Then
$$r_{k_1}=s_j+a_j=s_l+a_l\qquad r_{k_2}=s_j+b_j=s_l+b_l,$$
so $a_l-a_j=s_j-s_l=b_l-b_j$ and then $b_l-a_l=b_j-a_j$ as claimed.
\\

(b) Let $\gamma_i\in \cP$. We will prove the formula by induction on the lenght of $\gamma_i$. If $L(\gamma_i)=1$ then $\gamma_i$ is a single edge (say $p_j$) joining the vertices $C_{i}$ and $C_{\gamma}$.
Then $r_{i}=s_j+a_j, r_{\gamma}=s_j+b_j$ so $r_{\gamma}-r_i=b_j-a_j$ as claimed. Now assume that the formula is true for any minimal path of lenght at most $L$ and consider a minimal path $\gamma_i$ of lenght $L+1$. Let $p_l$ be the first edge, and denote by $C_k$ the second vertex  on the path (the first is $C_i$).
If we remove $p_l$ from the path we get, by the definition of $\cP$ the minimal path $\gamma_k$ joining $C_k$ to  $C_\gamma$ which has length $L$. So, by induction, we have
$$r_\gamma-r_k=\sum_{p_j\subseteq \gamma_k}(b_j-a_j).$$
On the other hand we have $r_{i}=s_l+a_l, r_{k}=s_l+b_l$ so $r_{k}-r_{i}=b_{l}-a_{l}$ and we have
$$r_{\gamma}-r_i=(r_k-r_i)+(r_\gamma-r_k)=(b_l-a_l)+\sum_{p_j\subseteq \gamma_k}(b_j-a_j)=\sum_{p_j\subseteq \gamma_i}(b_j-a_j)$$
as claimed.
\end{proof}

By Lemma \ref{LEM:PATH}(a) it follows that the choice of the marking $\cM$ does not influence the relation in Lemma \ref{LEM:PATH}(b). 

\begin{definition}
\label{DEF:AJ}
Assume that a marking $\cM$ and a set $\cP$ of minimal paths on $\Gamma_C$ (as in Notation \ref{NOT:GAMMADUAL}) have been chosen. Then, for any $p_j\in \cM$, we define $A_j$ to be the subcurve of $C$ with the following property: $C_i$ is a component of $A_j$ if and only if $p_j\subseteq \gamma_i$.
\end{definition}

Note that $A_j$ could be empty for same $j$: this occurs exactly when $p_j\not\in \cM'$. 
\\

Before stating the main result of this section, we will need the following technical result:

\begin{lemma}
\label{LEM:PROPAj}
Let $A_j \subseteq C$ be as in Definition \ref{DEF:AJ} and assume that $A_j$ is not empty. Then
\begin{enumerate}[(a)]
\item{} $A_j$ is a proper connected subcurve of $C$;
\item{} $A_j^c$ is connected;
\item{} $\sum_{C_i \subseteq A_j}\left(\lambda_i-\frac{\delta_i}{2}\right)=1-p_a(A_j) + (p_a(C) -1)\wrank(\OO_{A_j}) - \frac{1}{2} \delta_{A_j}=\Delta_{\w}(\OO_{A_j}) - \frac{1}{2} \delta_{A_j}$;
\item{} if $C$ is of compact type, then $\delta_{A_j}=1$.
\end{enumerate}
\end{lemma}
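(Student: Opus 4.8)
The plan is to establish the four assertions about the subcurve $A_j$ by exploiting its construction from the set of minimal paths $\cP$. The central observation is that $A_j$ is, by Definition \ref{DEF:AJ}, the union of all components $C_i$ whose minimal path $\gamma_i$ passes through the edge $p_j$. Because of condition (3) in Notation \ref{NOT:GAMMADUAL} (if $C_i$ lies on $\gamma_i$ and $C_j$ is a vertex on it, then $\gamma_j$ is a restriction of $\gamma_i$), the family of paths $\{\gamma_i\}$ has a tree-like coherence: the paths through $p_j$ are precisely those that continue through the ``far'' endpoint of $p_j$ toward $C_\gamma$. This suggests that $A_j$ consists exactly of the vertices sitting on the side of $p_j$ away from $C_\gamma$ (in the simple subgraph $\Gamma_C^{\cM}$).

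For part (a), I would argue that $A_j$ is connected by showing that if $C_i \in A_j$, then every vertex on the path $\gamma_i$ between $C_i$ and the ``near'' endpoint of $p_j$ also lies in $A_j$: indeed, the restriction condition forces their paths to also run through $p_j$. Since all components of $A_j$ are linked through paths terminating at the common endpoint of $p_j$, connectivity follows. Properness is clear since $C_\gamma \notin A_j$ (the path $\gamma_\gamma$ is trivial and does not use any edge). For part (b), the key point is that $A_j^c$ contains $C_\gamma$ and consists of the vertices on the $C_\gamma$-side of $p_j$; connectivity should follow from the fact that every such vertex has a minimal path to $C_\gamma$ that stays on that side (does not cross back through $p_j$, by minimality). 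I would make this precise using the non-backtracking/minimality property already used in Lemma \ref{LEM:GOODDEFPRECEDE}.

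Part (c) is the purely computational step and should follow directly from earlier formulas. Summing the identity $\lambda_i = 1 - g_i - w_i\chi(\OO_C)$ over $C_i \subseteq A_j$ and using $\chi(\OO_C) = 1 - p_a(C)$ together with Equation \eqref{EQ:paB} (which expresses $p_a(C)$ in terms of $p_a(A_j)$, $p_a(A_j^c)$ and $\delta_{A_j}$) and the additivity $\sum_{C_i \subseteq A_j} g_i + (\text{internal nodes of } A_j) - (\#\text{components of } A_j) + 1 = p_a(A_j)$, one identifies $\sum_{C_i\subseteq A_j}(\lambda_i - \delta_i/2)$ with $1 - p_a(A_j) + (p_a(C)-1)\wrank(\OO_{A_j}) - \tfrac12\delta_{A_j}$. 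The final equality with $\Delta_{\w}(\OO_{A_j}) - \tfrac12\delta_{A_j}$ is just the slope computation for $\OO_{A_j}$ already recorded in the proof of Lemma \ref{LEM:BOUNDS} (namely $\wdeg(\OO_{A_j}) = 1 - p_a(A_j) - \wrank(\OO_{A_j})\chi(\OO_C)$, combined with the degree term from Lemma \ref{LEM:PROPAj}(c) in the form of Lemma \ref{LEM:POLGEN}).

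I expect the main obstacle to be parts (a) and (b), specifically verifying that $A_j$ sits cleanly ``on one side'' of the edge $p_j$ — this is where the coherence condition (3) of $\cP$ and the minimality of the paths must be combined carefully, and where one must rule out a component of $A_j$ whose path happens to cross $p_j$ ``from the wrong direction.'' For part (d), when $C$ is of compact type the dual graph $\Gamma_C$ is a tree, so $\cM = \cE$ and $\Gamma_C^{\cM} = \Gamma_C$; removing the single edge $p_j$ disconnects the tree into exactly two pieces, one being $A_j$ and the other containing $C_\gamma$. Then $\delta_{A_j} = \#(A_j \cap A_j^c)$ counts edges of $C$ joining $A_j$ to $A_j^c$, and in a tree this is exactly the one edge $p_j$, giving $\delta_{A_j} = 1$. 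The only subtlety is confirming that no \emph{other} edge of $C$ crosses the cut, which holds precisely because a tree has no cycles.
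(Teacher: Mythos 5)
Your parts (a), (c) and (d) are essentially correct. For (a) you use exactly the paper's argument (coherence property (3) of $\cP$ shows every vertex of $\gamma_i$ up to the near endpoint of $p_j$ lies in $A_j$); your properness observation ($C_\gamma\notin A_j$ since $\gamma_\gamma$ is trivial) is even slightly cleaner than the paper's. For (c) the computation you outline is the paper's; the one ingredient you leave implicit is the node count $\sum_{C_i\subseteq A_j}\delta_i = 2N(A_j)+\delta_{A_j}$ (where $N(A_j)$ is the number of nodes of $A_j$), which is precisely how $\delta_{A_j}$ enters, and Equation \eqref{EQ:paB} is not actually needed there --- only the genus formula \eqref{EQ:paC} applied to the connected curve $A_j$. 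For (d) you take a genuinely different route: you argue graph-theoretically that in a tree every edge is a bridge, the two pieces of $\Gamma_C\setminus p_j$ are $A_j$ and $A_j^c$, and no other edge crosses the cut; the paper instead deduces $\delta_{A_j}=1$ numerically from \eqref{EQ:paB} applied to $A_j$ and $A_j^c$, which are of compact type by (a) and (b). Both work, and yours is arguably more transparent, though it silently uses (b) anyway to know the far piece is exactly $A_j$.

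Part (b), however, has a genuine gap as sketched. First, the picture of $A_j^c$ as ``the vertices on the $C_\gamma$-side of $p_j$'' is meaningless in general: $p_j$ need not be a bridge of $\Gamma_C^{\cM}$, so removing it need not disconnect anything. In the triangle example of Section \ref{SEC:EXAMPLE}, removing $p_1$ leaves $\Gamma_C$ connected, yet $A_1=C_2$; there are no ``sides''. Second, and more seriously, minimality --- the tool you invoke, in the style of Lemma \ref{LEM:GOODDEFPRECEDE} --- cannot close the argument. For $C_i\notin A_j$ the path $\gamma_i$ avoids the \emph{edge} $p_j$ by definition; what must be proved is that $\gamma_i$ avoids every \emph{vertex} of $A_j$, and a minimal path from $C_i$ to $C_\gamma$ can perfectly well pass through a vertex of $A_j$ while avoiding $p_j$, since minimal paths between two vertices are not unique. (Indeed, if one drops the coherence requirement and keeps only minimality, $A_j^c$ can actually be disconnected, so no argument from minimality alone can succeed.) The step that works, and is the paper's proof, again uses property (3) of $\cP$: if $\gamma_i$ passed through some $C_k\in A_j$, then the tail of $\gamma_i$ from $C_k$ to $C_\gamma$ would be exactly $\gamma_k$; since $C_k\in A_j$ we have $p_j\subseteq\gamma_k\subseteq\gamma_i$, contradicting $C_i\notin A_j$. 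With this substitution your part (b) becomes complete.
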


\begin{proof}
(a) Consider a component $C_i$ of $A_j$. Then the path $\gamma_i$ passes through $p_j$. Assume that $p_j\in C_{k_1}\cap C_{k_2}$ and that $C_{k_1}$ precedes $C_{k_2}$. Let $C_l$ be a vertex on $\gamma_i$ which is between $C_i$ and $C_{k_1}$ (included). Then $\gamma_l$ is the restriction of $\gamma_i$ and $p_j$ is an edge in $\gamma_l$. In particular, $C_l$ is a component of $A_j$. This shows that $C_i$ is connected to $C_{k_1}$ using only curves in $A_j$ so $A_j$ is connected. Properness follows as $C_{k_2}$ cannot be a component of $A_j$.
\\

(b) It is enough to show that if $C_i$ is a component not in $A_j$ then there is a path in $\Gamma_C$ from $C_i$ to $C_\gamma$ which only passes through vertices which correspond to components not in $A_j$. The path $\gamma_i$ connects $C_i$ with $C_\gamma$. Assume, by contradiction, that one of the vertex on the path $\gamma_i$, say $C_k$, is a component of $A_j$. Then, the restriction of $\gamma_i$ from $C_k$ to $C_{\gamma}$ is $\gamma_k$. Since $C_k$ is a component of $A_j$ we have that $p_j \subset \gamma_k$, so the same is true for $\gamma_i$. But this is impossible as we assumed that $C_i \not \in A_j$.
\\

(c) We denote by $C(A_j)$ and $N(A_j)$ the number of components and of nodes respectively of the curve $A_j$. We recall that $\delta_{A_j}=A_j\cdot A_j^c$ is the number of nodes of $C$ lying on $A_j$ which are not nodes of $A_j$. Then we have
\begin{multline}
\sum_{C_i \subseteq A_j}\left(\lambda_i-\frac{\delta_i}{2}\right)= \sum_{C_i \subseteq A_j} [1-g_i + w_i(p_a(C) -1)] - \frac{1}{2}\sum_{C_i \subseteq A_j}\delta_i= C(A_j) - \sum_{C_i \subseteq A_j}g_i +\\
+\wrank(\OO_{A_j}) (p_a(C) -1) -N(A_j)-\frac{1}{2}\delta_{A_j}
= 1-p_a(A_j) + (p_a(C) -1)\wrank(\OO_{A_j}) - \frac{1}{2} \delta_{A_j}
\end{multline}
as $A_j$ is connected and  $p_a(A_j)=\sum_{C_i\subseteq A_j}g_i+N(A_j)-C(A_j)+1$.  Finally,  we recall that $1-p_a(A_j) + (p_a(C) -1)\wrank(\OO_{A_j})=\Delta_{\w}(\OO_{A_j})$.
\\

(d) Since $C$ is of compact type, by (a) and (b)  it follows that $A_j$ and $A_j^c$ are both curves of compact type too.  
From Equation \eqref{EQ:paB} we have:
$$\sum_{i=1}^{\gamma}g_i =  \sum_{C_i\subseteq A_j}g_i+ \sum_{C_i\subseteq A_j^c}g_i  + \delta_{A_j}-1,$$
which implies $\delta_{A_j}= 1$.
\end{proof}

\begin{remark} 
We point out that, if $C$ is of compact type, the family of connected curves $\{ A_j\}$, defined in Definition \ref{DEF:AJ}, can be used to obtain  the conditions of $\w$-stability in \cite{Tei91} (see also Remark \ref{REM:TEIXIDOR}).
\end{remark}

We are now able to state our first result of this section: 

\begin{theorem}
\label{THM:DELTAGENERAL}
Let $(C,\w)$  be a  polarized  nodal curve. Fix a marking $\cM$ on the dual graph $\Gamma_C$ and a set  of minimal path $\cP$ as in Notations \ref{NOT:GAMMADUAL}.
Then for any  depth one sheaf $E$ we have:
$$
\Delta_{\w}(E)=\sum_{p_j \in \cM'}  \left[a_j\left(\frac{1}{2}(1 -  \delta_{A_j})+\Delta_{\w}(\OO_{A_j})\right)+b_j \left(\frac{1}{2}(1 + \delta_{A_j})-\Delta_{\w}(\OO_{A_j})\right)\right] +\frac{1}{2} \sum_{p_j \not\in\cM'} (a_j+b_j).
$$
In particular, if the conditions
\begin{equation}
\label{EQ:2star}
(\star\star)_{A_j}:\qquad \qquad \frac{1}{2}(\delta_{A_j}-1) < \Delta_{\w}(\OO_{A_j}) < \frac{1}{2}(\delta_{A_j}+1)
\end{equation}
hold for all the non-empty subcurves $A_j$ then $\w$ is a good polarization. 
\end{theorem}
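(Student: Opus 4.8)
The plan is to reduce everything to the residual-rank formula of Proposition \ref{PROP:DELTARESIDUAL}, namely $\Delta_{\w}(E)=\sum_{i=1}^{\gamma}r_i\left(\lambda_i-\frac{\delta_i}{2}\right)+\frac{1}{2}\sum_{j=1}^{\delta}t_{p_j}$, and then to rewrite the first sum using the path structure encoded by $\cP$. Setting $\mu_i:=\lambda_i-\frac{\delta_i}{2}$, the observation that drives the whole computation is that $\sum_{i=1}^{\gamma}\mu_i=0$: indeed $\sum_i\lambda_i=\delta$ by Lemma \ref{LEM:POLGEN}(a), while $\sum_i\delta_i=2\delta$ because each node, whose two branches lie on distinct smooth components, is counted once in each of the two components it meets.

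Next I would substitute $r_i=r_\gamma-\sum_{p_j\subseteq\gamma_i}(b_j-a_j)$, which is precisely Lemma \ref{LEM:PATH}(b). Since $\sum_i\mu_i=0$, the constant term $r_\gamma\sum_i\mu_i$ vanishes and $\sum_i r_i\mu_i=-\sum_{i}\mu_i\sum_{p_j\subseteq\gamma_i}(b_j-a_j)$. I would then exchange the order of summation: for a fixed edge $p_j\in\cM'$ the components $C_i$ with $p_j\subseteq\gamma_i$ are exactly those of $A_j$ by Definition \ref{DEF:AJ}, so the double sum becomes $\sum_{p_j\in\cM'}(b_j-a_j)\sum_{C_i\subseteq A_j}\mu_i$, and Lemma \ref{LEM:PROPAj}(c) identifies $\sum_{C_i\subseteq A_j}\mu_i=\Delta_{\w}(\OO_{A_j})-\frac{1}{2}\delta_{A_j}$. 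To finish the identity I would split the residual term $\frac{1}{2}\sum_{j}t_{p_j}=\frac{1}{2}\sum_{p_j}(a_j+b_j)$ into its $\cM'$ part and its complementary part, add the $\cM'$ part, and collect the coefficients of $a_j$ and $b_j$: a short expansion of $-(b_j-a_j)\left(\Delta_{\w}(\OO_{A_j})-\frac{1}{2}\delta_{A_j}\right)+\frac{1}{2}(a_j+b_j)$ rearranges exactly into the summand $a_j\left(\frac{1}{2}(1-\delta_{A_j})+\Delta_{\w}(\OO_{A_j})\right)+b_j\left(\frac{1}{2}(1+\delta_{A_j})-\Delta_{\w}(\OO_{A_j})\right)$, while the edges off $\cM'$ leave the stated remainder $\frac{1}{2}\sum_{p_j\notin\cM'}(a_j+b_j)$.

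For the final assertion I would read off that condition $(\star\star)_{A_j}$ says precisely that both coefficients $\frac{1}{2}(1-\delta_{A_j})+\Delta_{\w}(\OO_{A_j})$ and $\frac{1}{2}(1+\delta_{A_j})-\Delta_{\w}(\OO_{A_j})$ are strictly positive (the left inequality yields the first, the right one the second). As $a_j,b_j\geq0$, every summand and the complementary sum are nonnegative, whence $\Delta_{\w}(E)\geq0$ for all depth one $E$. For the equality clause, a locally free $E$ gives $\Delta_{\w}(E)=0$ by Lemma \ref{LEM:POLGEN}(b); conversely $\Delta_{\w}(E)=0$ forces each nonnegative summand to vanish, and strict positivity of the coefficients forces $a_j=b_j=0$ on $\cM'$, while the complementary sum forces $a_j=b_j=0$ off $\cM'$. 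Hence every residual rank $t_{p_j}=a_j+b_j$ vanishes and $E$ is locally free by Remark \ref{REM:RESIDUALZERO}, so $\w$ is good.

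The genuinely delicate point is not any individual calculation but the bookkeeping that separates the marking representatives lying on paths ($\cM'$) from all remaining edges: one must check that the path identity of Lemma \ref{LEM:PATH}(b) only ever involves edges of $\cM'$, so that precisely those edges acquire the $\Delta_{\w}(\OO_{A_j})$-weights, whereas the full residual term must range over all $\delta$ nodes. Keeping these index sets straight, together with the vanishing $\sum_i\mu_i=0$, is what the argument hinges on; the coefficient rearrangement and the positivity conclusion are then routine.
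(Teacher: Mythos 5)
Your proposal is correct and follows essentially the same route as the paper's own proof: starting from Proposition \ref{PROP:DELTARESIDUAL}, substituting via Lemma \ref{LEM:PATH}(b), killing the $r_\gamma$-term with $\sum_i\bigl(\lambda_i-\tfrac{\delta_i}{2}\bigr)=0$, swapping the order of summation, invoking Lemma \ref{LEM:PROPAj}(c), and collecting coefficients, with the same positivity argument for the goodness claim. The bookkeeping point you flag (edges on paths lie in $\cM'$, while the residual term runs over all nodes) is exactly how the paper's computation separates the two index sets, so there is nothing to add.
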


\begin{proof}
We start from the expression of $\Delta_{\w}(E)$ given by Proposition \ref{PROP:DELTARESIDUAL}. Then, using Lemma \ref{LEM:PATH}(b) we have
\begin{multline*}
\Delta_{\w}(E)=\sum_{i=1}^{\gamma}r_i\left(\lambda_i-\frac{\delta_i}{2}\right)+     \frac{1}{2}\sum_{j=1}^{\delta}t_{p_j} = \sum_{i=1}^{\gamma}\left(r_{\gamma}+\sum_{p_j\subseteq \gamma_i}(a_j-b_j)\right)\left(\lambda_i-\frac{\delta_i}{2}\right)+     \frac{1}{2}\sum_{j=1}^{\delta}t_{p_j}=\\
=r_{\gamma}\sum_{i=1}^{\gamma}\left(\lambda_i-\frac{\delta_i}{2}\right)+
\sum_{i=1}^{\gamma}\sum_{p_j\subseteq \gamma_i}(a_j-b_j)\left(\lambda_i-\frac{\delta_i}{2}\right)+\frac{1}{2}\sum_{j=1}^{\delta}t_{p_j}.
\end{multline*}
By Lemma \ref{LEM:POLGEN}(a) we have that the coefficient of $r_\gamma$ in the last equality is $0$ so $\Delta_{\w}(E)$ is equal to
\begin{multline*}
\sum_{i=1}^{\gamma}\sum_{p_j\subseteq \gamma_i}(a_j-b_j)\left(\lambda_i-\frac{\delta_i}{2}\right)+\frac{1}{2}\sum_{j=1}^{\delta}(a_j+b_j) =
\sum_{j=1}^{\delta}(a_j-b_j)\sum_{\gamma_i\supseteq p_j}\left(\lambda_i-\frac{\delta_i}{2}\right)+\frac{1}{2}\sum_{j=1}^{\delta}(a_j+b_j) = \\
= \sum_{p_j\in \cM'}\left[a_j\left(\frac{1}{2}+\sum_{\gamma_i\supseteq p_j}\left(\lambda_i-\frac{\delta_i}{2}\right)\right)+b_j\left(\frac{1}{2}-\sum_{\gamma_i\supseteq p_j}\left(\lambda_i-\frac{\delta_i}{2}\right)\right)\right]
+\frac{1}{2}\sum_{p_j\not\in\cM'}(a_j+b_j)
\end{multline*}
since, if $p_j\not\in \cM'$ the sum over the path passing through $p_j$ is trivial. If $p_j\in \cM'$, the condition $\gamma_i\supseteq p_j$ is equivalent to $C_i\in A_j$ so, by Lemma \ref{LEM:PROPAj}(c) we have
\begin{equation*}
\Delta_{\w}(E)= \sum_{p_j \in \cM'}  \left[a_j\left( \frac{1}{2}+\Delta_{\w}(\OO_{A_j})- \frac{\delta_{A_j}}{2}\right)+b_j\left(\frac{1}{2}-\Delta_{\w}(\OO_{A_j})+ \frac{\delta_{A_j}}{2}\right) \right]+ \frac{1}{2}\sum_{p_j \not\in \cM'} (a_j+ b_j)
\end{equation*}
which is equal to the expression in the statement of the Theorem.
\\
Finally, if Conditions \eqref{EQ:2star} hold, we have that all the coefficients of $a_j$ and $b_j$ in the last expression of $\Delta_{\w}(E)$ are strictly positive. This proves that $\Delta_{\w}(E)\geq 0$. Moreover, if at least one among $a_j$ and $b_j$ for $j=1,\dots, \delta$ is not zero we have $\Delta_{\w}(E)>0$. Hence we have that $\Delta_{\w}(E)>0$ if and only if $E$ is locally free, i.e. $\w$ is a good polarization.
\end{proof}

With the expression given in Theorem \ref{THM:DELTAGENERAL} we are able to give a (positive) answer to Question \ref{QUE:GOODANDOCSTAB}  for  curves of compact type.

\begin{theorem}
\label{THM:COMPACTTYPE}
Let $(C,\w)$ be a polarized nodal curve of compact type. Then the collection $\{A_j\,|\, j\in \cE\}$ depends only on the choice\footnote{It is the arbitrary curve which we fix when we define  the set of minimal paths $\cP$.} of $C_{\gamma}$, for all $p_j\in \cE$ the curve $A_j$ is non-empty and we have
$$\Delta_{\w}(E)=\sum_{p_j \in \cE}  \left[a_j\left(\Delta_{\w}(\OO_{A_j})\right)+b_j \left(1-\Delta_{\w}(\OO_{A_j})\right)\right].$$
Moreover, we have that $\OO_C$ is $\w$-stable if and only if $\w$ is good.
\end{theorem}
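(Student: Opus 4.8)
The plan is to exploit the single structural feature distinguishing compact type, namely that the dual graph $\Gamma_C$ is a tree, and to reduce the whole statement to two inputs already in hand: the general formula of Theorem \ref{THM:DELTAGENERAL} and the equality $\delta_{A_j}=1$ of Lemma \ref{LEM:PROPAj}(d). In fact almost all of the genuine work has already been absorbed into the preliminary lemmas, so the proof will consist mainly of recognizing how they specialize.

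First I would settle the combinatorial assertions. Since $\Gamma_C$ is a tree it has no two distinct edges joining the same pair of vertices, so the equivalence relation on $\cE$ is trivial and the only possible marking is $\cM=\cE$. Between any two vertices of a tree there is a unique path, hence once $C_\gamma$ is fixed there is no remaining freedom in choosing $\cP$: each $\gamma_i$ must be the unique $\Gamma_C$-path from $C_i$ to $C_\gamma$. This shows that the collection $\{A_j\}$ depends only on the choice of $C_\gamma$. To see that every $A_j$ is non-empty, I would note that deleting the edge $p_j$ from the tree splits it into exactly two subtrees, precisely one of which contains $C_\gamma$; every vertex $C_i$ of the other subtree reaches $C_\gamma$ only through $p_j$, so $p_j\subseteq\gamma_i$ and thus $C_i\subseteq A_j$. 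Consequently $A_j$ is exactly that complementary subtree, and in particular $\cM'=\cE$.

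With $\cM'=\cE$ the second sum in the formula of Theorem \ref{THM:DELTAGENERAL} is empty, and substituting $\delta_{A_j}=1$ from Lemma \ref{LEM:PROPAj}(d) collapses the coefficient of $a_j$ to $\Delta_{\w}(\OO_{A_j})$ and that of $b_j$ to $1-\Delta_{\w}(\OO_{A_j})$, which is the displayed expression for $\Delta_{\w}(E)$.

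The substantive point is the final equivalence, and this is where I would direct the reader's attention. One implication is immediate: if $\w$ is good then $\OO_C$ is $\w$-stable by Theorem \ref{StableOC}. For the converse I would observe that, because $\delta_{A_j}=1$, the condition $(\star\star)_{A_j}$ of \eqref{EQ:2star} reads simply $0<\Delta_{\w}(\OO_{A_j})<1$, which is nothing but Inequality \eqref{EQ:STABOCDELTA} of Lemma \ref{LEM:BOUNDS} applied to the proper connected subcurve $B=A_j$ (proper and connected by Lemma \ref{LEM:PROPAj}(a)). Hence $\w$-stability of $\OO_C$ forces all the $(\star\star)_{A_j}$ to hold, and Theorem \ref{THM:DELTAGENERAL} then yields that $\w$ is good. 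The only real insight here is this numerical coincidence: the conditions sufficient for goodness and the conditions equivalent to $\w$-stability of $\OO_C$ become identical precisely because on a curve of compact type every $A_j$ meets its complement in a single node. I expect no technical obstacle in the argument itself; the one place where compact type is genuinely used is the input $\delta_{A_j}=1$, and everything else is formal bookkeeping on the tree.
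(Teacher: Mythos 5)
Your proposal is correct and follows essentially the same route as the paper's proof: uniqueness of paths in the tree gives the combinatorial claims, Lemma \ref{LEM:PROPAj}(d) collapses the formula of Theorem \ref{THM:DELTAGENERAL}, and the equivalence follows by combining Theorem \ref{StableOC} with the observation that $\delta_{A_j}=1$ makes conditions $(\star\star)_{A_j}$ coincide with the stability inequalities of Lemma \ref{LEM:BOUNDS}. The only cosmetic difference is that you establish $\cM'=\cE$ and the non-emptiness of the $A_j$ directly (deleting $p_j$ splits the tree into two subtrees, and $A_j$ is the one avoiding $C_\gamma$), whereas the paper argues by contradiction that an edge outside $\cM'$ would create a cycle; both amount to the same fact about trees.
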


\begin{proof}
As $C$ is of compact type we have that $\cE=\cM$ and also that $\cM=\cM'$. In fact, assume that there exists an edge $p_j \in \cM \setminus \cM'$, then $p_j = C_{k_1} \cap C_{k_2}$ and  $p_j \not\subseteq \gamma_{k_i}$, with  $\gamma_{k_i} \in \cP$. Then $\gamma_{k_1} \cup \gamma_{k_2} \cup p_j$ is the support of a cycle in $\Gamma_C$, which is impossible. The set $\cP$ is uniquely determined by the curve fixed at the beginning, i.e. on the component we have labeled $C_{\gamma}$. Then, the collection $\{A_j\,|\, j\in \cE\}$ is also uniquely determined by $C_\gamma$. Finally, since $\Gamma_C$ does not have any cycles, then   $A_j$ is non-empty for all $p_j\in \cE$.
\\

As $C$ is of compact type we have, by Lemma \ref{LEM:PROPAj}(d) that $\delta_{A_j}=1$ for all subcurve $A_j$.  With this information we can write the formula of Theorem \ref{THM:DELTAGENERAL} as follows:
$$\Delta_{\w}(E)=\sum_{p_j \in \cE}  \left[a_j\left(\Delta_{\w}(\OO_{A_j})\right)+b_j \left(1-\Delta_{\w}(\OO_{A_j})\right)\right].$$
In order to conclude the proof, by Theorem \ref{StableOC}, we only need to show that if $\OO_C$ is $\w$-stable then  $\w$ is a good polarization. Assume that $\OO_C$ is $\w$-stable. This, by Lemma \ref{LEM:BOUNDS}, is equivalent to saying $0<\Delta_{\w}(\OO_B)<\delta_B$ for all proper subcurves $B$ of $C$. In particular, for all $j$ we have $0<\Delta_{\w}(\OO_{A_j})<1$, which are the Conditions \eqref{EQ:2star} stated in Theorem \ref{THM:DELTAGENERAL}.
\end{proof}

In particular, for nodal curves of arithmetic genus $p_a(C)\leq 1$ we have a complete picture of the situation:

\begin{corollary}
\label{COR:pa01}
Let $C$  be a  nodal curve with $p_a(C) \leq 1$.
\begin{enumerate}[(a)]
    \item{} If $p_a(C) = 0$ then  any polarization on $C$  is good;
    \item{} if $p_a(C) = 1$ and $C$ is a cycle of rational curves then any polarization   is good;
    \item{} if $p_a(C) = 1$ and $C$ is of compact type then a good polarization on $C$ does not exist. 
\end{enumerate}
In particular, if $(C,\w)$ is any polarized nodal curve with $p_a(C)\leq 1$ then $\OO_C$ is $\w$-stable if and only if $\w$ is good.
\end{corollary}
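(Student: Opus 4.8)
The plan is to split according to $p_a(C)$ and the topology of $\Gamma_C$, reducing each case to results already proved. First I would record the relevant trichotomy: writing $b_1(\Gamma_C)=\delta-\gamma+1\geq 0$, Equation \eqref{EQ:paC} reads $p_a(C)=\sum_i g_i+b_1(\Gamma_C)$. Hence $p_a(C)=0$ forces $\sum_i g_i=0$ and $b_1(\Gamma_C)=0$, so $C$ is of compact type with rational components; and $p_a(C)=1$ splits into the compact type case (a unique elliptic component, $b_1=0$) and the case of rational components with $b_1(\Gamma_C)=1$, this last one being a cycle of rational curves exactly when the dual graph carries no trees besides the cycle.

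For part (a) I would combine two facts. By the $p_a(C)=0$ clause of Lemma \ref{LEM:BOUNDS}, $\OO_C$ is $\w$-stable for every $\w$; and since $C$ is of compact type, Theorem \ref{THM:COMPACTTYPE} gives that $\w$ is good if and only if $\OO_C$ is $\w$-stable. Together these yield that every polarization is good. For part (b) I would instead compute $\Delta_{\w}$ directly: when $p_a(C)=1$ we have $\chi(\OO_C)=0$, and on a cycle of rational curves each component has $g_i=0$ and $\delta_i=2$, so $\lambda_i=1-g_i-w_i\chi(\OO_C)=1=\delta_i/2$. Proposition \ref{PROP:DELTARESIDUAL} then collapses to $\Delta_{\w}(E)=\tfrac12\sum_j t_{p_j}\geq 0$, with equality if and only if every residual rank vanishes, i.e. by Remark \ref{REM:RESIDUALZERO} if and only if $E$ is locally free; hence $\w$ is good, for every $\w$ (consistently with the fact that $\Delta_{\w}$ is $\w$-independent here).

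For part (c), and more generally for every $p_a(C)=1$ curve that is not a cycle, I would argue by contradiction: if $\w$ were good, then by Theorem \ref{StableOC} the sheaf $\OO_C$ would be $\w$-stable, whereas the $p_a(C)=1$ clause of Lemma \ref{LEM:BOUNDS} allows this only when $C$ is a cycle of rational curves; so no good $\w$ can exist. In the compact type case one can also exhibit the obstruction by hand: the unique elliptic component $C_1$ is a proper subcurve (as $C$ is reducible), and $\Delta_{\w}(\OO_{C_1})=\wdeg(\OO_{C_1})=\chi(\OO_{C_1})-\wrank(\OO_{C_1})\chi(\OO_C)=(1-p_a(C_1))-0=0$ while $\OO_{C_1}$ is not locally free, which violates Definition \ref{GOOD} for every $\w$.

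Finally I would assemble the concluding biconditional case by case. If $p_a(C)=0$, or if $p_a(C)=1$ and $C$ is a cycle, then parts (a)--(b) make $\w$ always good and Lemma \ref{LEM:BOUNDS} makes $\OO_C$ always $\w$-stable, so both sides hold. In the remaining case ($p_a(C)=1$, $C$ not a cycle) the previous paragraph makes $\w$ never good and Lemma \ref{LEM:BOUNDS} makes $\OO_C$ never $\w$-stable, so both sides fail; in either situation the equivalence holds. The one point requiring care, which I expect to be the only genuine obstacle, is the completeness of the trichotomy: the rational curves with $b_1(\Gamma_C)=1$ that are \emph{not} pure cycles fall outside (a)--(c) and must be handled in the final biconditional through the uniform argument via Theorem \ref{StableOC} and Lemma \ref{LEM:BOUNDS}, rather than by the itemized parts.
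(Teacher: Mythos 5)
Your proof is correct, and parts (a) and (c) follow the paper's own route: (a) combines the $p_a(C)=0$ clause of Lemma \ref{LEM:BOUNDS} with Theorem \ref{THM:COMPACTTYPE}, and (c) is precisely the content of Remark \ref{REM:cycle} (goodness forces $\w$-stability of $\OO_C$ by Theorem \ref{StableOC}, which Lemma \ref{LEM:BOUNDS} forbids for $p_a(C)=1$ unless $C$ is a cycle); your extra hands-on obstruction --- the elliptic component $C_1$ with $\Delta_{\w}(\OO_{C_1})=0$ yet $\OO_{C_1}$ not locally free --- is a nice concrete alternative to that citation. Where you genuinely diverge is part (b): the paper invokes the machinery of Theorem \ref{THM:DELTAGENERAL}, checking that the nonempty subcurves $A_j$ of Definition \ref{DEF:AJ} satisfy $\delta_{A_j}=2$ and $\Delta_{\w}(\OO_{A_j})=1$, hence the sufficient conditions $(\star\star)_{A_j}$. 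You instead observe that $\chi(\OO_C)=0$ makes $\lambda_i=1-g_i=1=\delta_i/2$ for every component and every $\w$, so Proposition \ref{PROP:DELTARESIDUAL} collapses to $\Delta_{\w}(E)=\tfrac{1}{2}\sum_j t_{p_j}$; this is exactly the computation the paper uses in Proposition \ref{polarizationeta} for the canonical polarization, and it is both more elementary (no marking, paths, or $A_j$'s needed) and more transparent about why the conclusion is independent of $\w$. Finally, your case-by-case assembly of the concluding biconditional --- including the observation that $p_a(C)=1$ curves consisting of a cycle with rational tails fall outside items (a)--(c) and must be handled by the uniform argument via Theorem \ref{StableOC} and Lemma \ref{LEM:BOUNDS} --- is more careful than the paper, which leaves this step implicit (part (c) as stated covers only the compact type case, while Remark \ref{REM:cycle} covers all non-cycles); spelling it out as you do is a genuine improvement in completeness.
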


\begin{proof}
(a) Let $C$ be a nodal curve with $p_a(C) = 0$. Then $C$ is of compact type and by Lemma \ref{LEM:BOUNDS}, $\OO_C$ is $\w$-stable for any polarization $\w$. By Theorem \ref{THM:COMPACTTYPE} we have that  any $\w$ is a good polarization.
\\

(b) Let $C$ be a cycle of rational curves and $\w$ a polarization.  Fix a marking $\cM$ and a set $\cP$ of minimal paths on $\Gamma_C$ and let $ \{ A_j \}$ be the subcurves defined in Definition \ref{DEF:AJ}.
By Lemma \ref{LEM:PROPAj}, for any $j$ for which $A_j$ is not empty, $A_j$ and $A_j^c$ are both proper connected subcurves of $C$,  so $p_a(A_j) = p_a(A_j^c)=0$.
From Equation \eqref{EQ:paB}, we get $\delta_{A_j}=2$.
Since $\Delta_{\w}(\OO_{A_j})= 1- p_a(A_j) = 1$, 
we have $\frac{1}{2} < \Delta_{\w}(\OO_{A_j}) < \frac{3}{2}$
which are the sufficient conditions $(\star\star)_{A_j}$ stated in Theorem \ref{THM:DELTAGENERAL}. This implies that $\w$ is good. (c) It follows by Remark \ref{REM:cycle}.
\end{proof}

Finally as an immediate consequence of Corollary 
\ref{COR:balanced} and  Theorem \ref{THM:DELTAGENERAL}, we have the following:
\begin{corollary}
\label{COR:equivbalanced}
Let $C$ be a stable nodal curve of compact type with $p_a(C) \geq 2$. Let  $L$ be a line bundle on $C$ with degree $p_a(C)-1$.
Then $\w_L$ is a good polarization if and only if $L$ is  strictly balanced. 
\end{corollary}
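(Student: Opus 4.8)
The plan is to deduce the statement by chaining two equivalences that are already available, without any fresh computation: one linking goodness of $\w_L$ to the $\w_L$-stability of $\OO_C$, and one linking that stability to $L$ being strictly balanced. So the whole proof amounts to checking that the hypotheses of the two cited results hold simultaneously and then concatenating the biconditionals.

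First I would record the standing data. Since we speak of the polarization $\w_L$, the line bundle $L$ must be ample (see Remark \ref{linebundlepolarization}), so $(C,\w_L)$ is genuinely a polarized nodal curve. Because $C$ is of compact type, Theorem \ref{THM:COMPACTTYPE} applies directly and furnishes the first equivalence: $\w_L$ is a good polarization if and only if $\OO_C$ is $\w_L$-stable. (I note in passing that the sentence preceding the statement cites Theorem \ref{THM:DELTAGENERAL}; the packaged equivalence ``$\OO_C$ is $\w$-stable $\Longleftrightarrow$ $\w$ is good'' for compact type is exactly the content of Theorem \ref{THM:COMPACTTYPE}, which is what I would invoke. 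One could instead run the argument through Theorem \ref{THM:DELTAGENERAL} and Lemma \ref{LEM:BOUNDS}, using $\delta_{A_j}=1$ to turn the conditions $(\star\star)_{A_j}$ into $0<\Delta_{\w}(\OO_{A_j})<1$, but this is merely a longer route to the same equivalence.)

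Next I would invoke Corollary \ref{COR:balanced}, whose hypotheses are precisely those present here: $C$ is stable with $p_a(C)\geq 2$ (stability being built into ``stable nodal curve of compact type''), and $L$ is an ample line bundle of degree exactly $p_a(C)-1$. This gives the second equivalence: $\OO_C$ is $\w_L$-stable if and only if $L$ is strictly balanced. Combining the two biconditionals yields that $\w_L$ is good if and only if $L$ is strictly balanced, which is the claim. The only point requiring any care—and the closest thing to an obstacle in what is otherwise an immediate corollary—is this bookkeeping: confirming that the degree-$(p_a(C)-1)$ and ampleness hypotheses are shared by both cited results and that ``stable of compact type'' supplies the stability demanded by Corollary \ref{COR:balanced}. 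Once this is verified the assertion follows at once.
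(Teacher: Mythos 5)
Your proof is correct and takes essentially the same route as the paper, which states this corollary with no further argument as an ``immediate consequence'' of Corollary \ref{COR:balanced} chained with the compact-type equivalence between goodness of $\w_L$ and $\w_L$-stability of $\OO_C$. Your citation of Theorem \ref{THM:COMPACTTYPE} (rather than the paper's citation of Theorem \ref{THM:DELTAGENERAL}, from which that theorem is derived) is if anything the more direct reference, and your verification that ampleness of $L$ is forced by the existence of $\w_L$ and that the degree and stability hypotheses of Corollary \ref{COR:balanced} are met is exactly the bookkeeping the paper leaves implicit.
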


As we have seen in the proof of Theorem \ref{THM:COMPACTTYPE}, Conditions \eqref{EQ:2star} are really useful as they allow us to prove that, on a curve of compact type, a polarization $\w$ is good if and only if $\OO_C$ is $\w$-stable. Nevertheless, it can happen that the notion of good polarization is equivalent to the $\w$-stability of $\OO_C$ also for curves which are not of compact type (see Corollary \ref{COR:pa01} and the examples in Section \ref{SEC:EXAMPLE}). 
The reason for this is that Conditions \eqref{EQ:2star} are, in general, only sufficient. Moreover, to the authors' knowledge, there is no example of a polarized curve $(C,\w)$ with $\OO_C$ which is $\w$-stable but for which $\w$ is not good. This suggests the following conjecture:
\begin{conjecture}
\label{CONJ:OCwSTAB}
Let $(C,\w)$ be a polarized nodal curve. Then  
$\OO_C$ is $\w$-stable if and only if $\w$ is a good polarization. 
\end{conjecture}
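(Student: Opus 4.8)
The plan is to establish the only nontrivial implication, namely that $\w$-stability of $\OO_C$ forces $\w$ to be good; the converse is already in hand, since by Corollary \ref{COR:necbounds} (or directly from the last assertion of Theorem \ref{StableOC}) a good polarization makes $\OO_C$ $\w$-stable. So I would assume $\OO_C$ is $\w$-stable, which by Lemma \ref{LEM:BOUNDS} means $\Delta_{\w}(\OO_B)>0$ for every proper subcurve $B\subsetneq C$, and try to bound $\Delta_{\w}(E)$ from below by a quantity depending only on the multirank $(r_1,\dots,r_\gamma)$ of $E$. Starting from Proposition \ref{PROP:DELTARESIDUAL}, the coefficient $\tfrac12$ of each residual rank $t_{p_j}$ is positive, and at a node $p_j\in C_{k_1}\cap C_{k_2}$ one has $t_{p_j}=r_{k_1}+r_{k_2}-2s_j\geq\lvert r_{k_1}-r_{k_2}\rvert$ because $0\leq s_j\leq\min(r_{k_1},r_{k_2})$. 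Hence
$$\Delta_{\w}(E)\ \geq\ \sum_{i=1}^{\gamma}r_i\Bigl(\lambda_i-\tfrac{\delta_i}{2}\Bigr)+\tfrac12\sum_{p_j\in C_{k_1}\cap C_{k_2}}\lvert r_{k_1}-r_{k_2}\rvert,$$
and I would call the right-hand side $f(r_1,\dots,r_\gamma)$; equality holds exactly when $s_j=\min(r_{k_1},r_{k_2})$ at every node.

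The heart of the argument is a layer-cake (coarea) identity for $f$. For $t\geq1$ set $B_t=\{C_i:r_i\geq t\}$, a nested family of subcurves, so that $r_i$ counts the levels $t$ with $C_i\subseteq B_t$ and $\lvert r_{k_1}-r_{k_2}\rvert$ counts the levels $t$ at which $p_j$ separates $B_t$ from $B_t^c$. Summing over $t$, the number of separating nodes of $B_t$ is $\delta_{B_t}$, and using the incidence identity $\sum_{C_i\subseteq B_t}\delta_i=2N(B_t)+\delta_{B_t}$ (with $N(B_t)$ the number of nodes internal to $B_t$) the boundary terms $\pm\tfrac12\delta_{B_t}$ cancel, giving
$$f(r_1,\dots,r_{\gamma})=\sum_{t\geq1}\Bigl(\sum_{C_i\subseteq B_t}\lambda_i-N(B_t)\Bigr)=\sum_{t\geq1}\Delta_{\w}(\OO_{B_t}),$$
where the last equality is the evaluation of $\Delta_{\w}(\OO_{B_t})$ coming from Lemma \ref{LEM:POLGEN}(a) (the very quantity appearing in Lemma \ref{LEM:PROPAj}(c) after restoring $\tfrac12\delta_{B_t}$).

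To conclude, I would note that for a nonconstant multirank at least one level $t$ (for instance $t=1+\min_i r_i$) has $B_t$ nonempty and proper, while the extreme levels contribute $0$ since $\Delta_{\w}(\OO_\emptyset)=0$ and $\Delta_{\w}(\OO_C)=0$. By $\w$-stability every proper $B_t$ — including disconnected ones, by additivity of $\Delta_{\w}$ over connected components (Lemma \ref{LEM:POLGEN}(e)) — satisfies $\Delta_{\w}(\OO_{B_t})>0$, so $f>0$ and hence $\Delta_{\w}(E)>0$. If instead the multirank is constant then $f=0$ and $\Delta_{\w}(E)=\tfrac12\sum_j t_{p_j}\geq0$, which vanishes iff all $t_{p_j}=0$, i.e. iff $E$ is locally free by Remark \ref{REM:RESIDUALZERO}. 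Together with Lemma \ref{LEM:POLGEN}(b) this gives $\Delta_{\w}(E)\geq0$ with equality precisely for locally free $E$, so $\w$ is good.

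The step I expect to be the crux is the coarea identity, which is the device that replaces the only-sufficient pointwise conditions $(\star\star)_{A_j}$ of Theorem \ref{THM:DELTAGENERAL} — imposing the narrow window $\tfrac12(\delta_{A_j}-1)<\Delta_{\w}(\OO_{A_j})<\tfrac12(\delta_{A_j}+1)$ — by the full family of stability inequalities $\Delta_{\w}(\OO_B)>0$, one for each proper subcurve. The care needed there is entirely in the combinatorial bookkeeping of incidences: self-nodes (loops in $\Gamma_C$), several nodes joining the same pair of components, and disconnected superlevel sets $B_t$ must all be checked to verify $\sum_{C_i\subseteq B_t}\delta_i=2N(B_t)+\delta_{B_t}$ and the cancellation of the boundary terms verbatim in every configuration. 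Once that identity is secured, no hypothesis of compact type enters and the argument would cover all polarized nodal curves, thus settling the conjecture.
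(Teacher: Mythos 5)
Your proposal is correct as far as I can check, and you should know that the paper does not actually prove this statement: it is left as Conjecture \ref{CONJ:OCwSTAB} and verified only for curves of compact type (Theorem \ref{THM:COMPACTTYPE}) and for $p_a(C)\leq 1$ (Corollary \ref{COR:pa01}), the obstruction being that the paper's conditions $(\star\star)_{A_j}$ of Theorem \ref{THM:DELTAGENERAL} are merely sufficient: $\w$-stability of $\OO_C$ gives $0<\Delta_{\w}(\OO_B)<\delta_B$ for every proper subcurve $B$, and this forces $\Delta_{\w}(\OO_{A_j})$ into the width-one window of $(\star\star)_{A_j}$ only when $\delta_{A_j}=1$, which is why the paper's path machinery closes the argument essentially only in the compact-type case. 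Your route is genuinely different and strictly stronger. Both key ingredients hold up under scrutiny: the bound $t_{p_j}\geq\lvert r_{k_1}-r_{k_2}\rvert$ follows from $0\leq s_j\leq\min(r_{k_1},r_{k_2})$ via Equation \eqref{EQ:RSA}; and the coarea identity is sound, since Lemma \ref{LEM:POLGEN}(a) applied to $\OO_B$ gives $\Delta_{\w}(\OO_B)=\sum_{C_i\subseteq B}\lambda_i-N(B)$ (with $N(B)$ the number of nodes of $C$ internal to $B$), the incidence count $\sum_{C_i\subseteq B}\delta_i=2N(B)+\delta_B$ is immediate because the paper's standing hypothesis that all components are smooth guarantees every node lies on two \emph{distinct} components (so your worry about loops in $\Gamma_C$ is vacuous, and parallel edges are harmless since nodes are counted one at a time), and the $\pm\tfrac12\delta_{B_t}$ terms cancel exactly as you say, yielding $f=\sum_{t\geq1}\Delta_{\w}(\OO_{B_t})$. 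The endgame is also right: levels with $B_t=C$ or $B_t=\emptyset$ contribute zero, a nonconstant multirank produces at least one proper nonempty superlevel curve $B_t$, disconnected $B_t$ are covered by additivity (Lemma \ref{LEM:POLGEN}(e)) combined with Lemma \ref{LEM:BOUNDS}, and the constant-multirank case is precisely Lemma \ref{LEM:POLGEN}(c) with Remark \ref{REM:RESIDUALZERO}. What your approach buys over the paper's is that the subcurves consuming the stability hypothesis are dictated by the sheaf $E$ itself (the superlevel sets of its multirank) rather than fixed in advance by a marking and a choice of minimal paths, so the full family of inequalities $\Delta_{\w}(\OO_B)>0$ is available, not just the narrow windows $(\star\star)_{A_j}$; no compact-type hypothesis is ever needed. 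For a careful write-up you should only add two small points: fix the convention $\Delta_{\w}(\OO_{\emptyset})=0$ and dispose of $E=0$ separately, and remark that a locally free sheaf on the connected curve $C$ has constant multirank, so the strict positivity you obtain in the nonconstant case is consistent with (and needed for) the ``equality if and only if locally free'' clause of Definition \ref{GOOD}.
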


Finally,  Theorem \ref{THM:DELTAGENERAL} allows us to  produce  an open subset of good polarizations on a stable nodal curve $C$ with $p_a(C) \geq 2$.  
For any nodal curve $C$ we will denote by 
$\mathcal W_C \subset \mathbb Q^{\gamma}$ the variety parametrizing polarizations on $C$.

\begin{lemma}
\label{LEM:GOODOPEN}
Let $C$ be a nodal curve, fix a marking $\cM$ and a set of minimal paths $\cP$. Then Conditions \eqref{EQ:2star} are open in  $\mathcal{W}_C$.
\end{lemma}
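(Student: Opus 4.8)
The plan is to realize the good-polarization conditions as a finite intersection of preimages of open intervals under continuous maps, so that openness follows at once. Since $\mathcal{W}_C$ sits inside the affine hyperplane $\{\sum_i w_i = 1\}$ with its Euclidean (relative) topology, it suffices to check that each individual condition $(\star\star)_{A_j}$ cuts out an open subset and that only finitely many such conditions occur.

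First I would make the dependence of $\Delta_{\w}(\OO_{A_j})$ on $\w$ explicit. By Lemma \ref{LEM:PROPAj}(c) we have
\[
\Delta_{\w}(\OO_{A_j}) = 1 - p_a(A_j) + (p_a(C)-1)\,\wrank(\OO_{A_j}),
\]
and $\wrank(\OO_{A_j}) = \sum_{C_i \subseteq A_j} w_i$ is a linear function of the weights. Hence, for each fixed subcurve $A_j$, the map $\w \mapsto \Delta_{\w}(\OO_{A_j})$ is affine, in particular continuous, on $\mathcal{W}_C$. Crucially, the two bounds $\tfrac12(\delta_{A_j}-1)$ and $\tfrac12(\delta_{A_j}+1)$ are combinatorial constants attached to $A_j$ and do not vary with $\w$, since $\delta_{A_j}$ depends only on the nodes of $C$ lying on $A_j$.

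With this in hand, each condition $(\star\star)_{A_j}$ identifies the set of polarizations satisfying it with the preimage
\[
\Big\{\,\w \in \mathcal{W}_C \ :\ \Delta_{\w}(\OO_{A_j}) \in \big(\tfrac12(\delta_{A_j}-1),\, \tfrac12(\delta_{A_j}+1)\big)\,\Big\}
\]
of an open interval under a continuous map, and is therefore open in $\mathcal{W}_C$. Finally, because the marking $\cM$ has finitely many edges, the family $\{A_j\}$ of non-empty subcurves attached to $\cP$ is finite, so Conditions \eqref{EQ:2star} amount to the intersection of finitely many open sets, which is again open. There is no serious obstacle in this argument; the only points requiring care are recording that the number of conditions is finite (guaranteed by the finiteness of $\cM$) and that openness is understood in the relative topology of $\mathcal{W}_C$, both of which are immediate.
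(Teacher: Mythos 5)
Your proof is correct and follows essentially the same route as the paper: both arguments reduce to the observation that $\w \mapsto \Delta_{\w}(\OO_{A_j})$ is affine in the weights (the paper phrases this as the perturbation identity $\Delta_{\w'}(\OO_{A_j}) = \Delta_{\w}(\OO_{A_j}) + (p_a(C)-1)\sum_{C_i\subseteq A_j}(w_i'-w_i)$, you phrase it via Lemma \ref{LEM:PROPAj}(c) and continuity), after which openness of the finitely many strict inequalities $(\star\star)_{A_j}$ is immediate.
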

\begin{proof}
Let $ \{ A_j \}$ be the curves constructed starting from $\mathcal{P}$. Consider $\w$ and $\w'$ in $\mathcal{W}_C$ and set $\epsilon_i=w_i'-w_i$. Then we have
$$\Delta_{\w'}(\OO_{A_j}) = \Delta_{\w}(\OO_{A_j}) + (p_a(C)-1) \sum_{C_i \subseteq A_j}\epsilon_i.$$
If $\w$ satisfies Conditions $(\star\star)_{A_j}$ for all non-empty $A_j$ then one can take $\epsilon_i$ to be small enough so that $(\star\star)_{A_j}$ hold also for $\w'$.
\end{proof}

\begin{corollary}
\label{COR:familyw}
Let $C$ be a stable nodal curve with $p_a(C) \geq 2$. Then there is a non-empty open subset of $\mathcal W_C$ whose elements are good polarizations. 
\end{corollary}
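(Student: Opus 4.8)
The plan is to exhibit the canonical polarization $\underline{\eta}$ as a point lying strictly inside the region cut out by the conditions $(\star\star)_{A_j}$, and then to propagate goodness to a whole neighborhood using the openness of those conditions.

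First I would fix, once and for all, a marking $\cM$ and a set of minimal paths $\cP$ on the dual graph $\Gamma_C$, so that the finite family of connected subcurves $\{A_j\}$ of Definition \ref{DEF:AJ} is determined. Since $C$ is stable with $p_a(C)\geq 2$, the sheaf $\omega_C$ is ample, so $\underline{\eta}$ is a genuine element of $\mathcal{W}_C$ (and, by Proposition \ref{polarizationeta}, already a good polarization). The key computation is then to evaluate $\Delta_{\underline{\eta}}(\OO_{A_j})$ for each non-empty $A_j$: for the canonical polarization one has $\lambda_i = \delta_i/2$, so the left-hand side of the identity in Lemma \ref{LEM:PROPAj}(c) vanishes and we obtain $\Delta_{\underline{\eta}}(\OO_{A_j}) = \tfrac{1}{2}\delta_{A_j}$. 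This is precisely the midpoint of the open interval $\bigl(\tfrac{1}{2}(\delta_{A_j}-1),\,\tfrac{1}{2}(\delta_{A_j}+1)\bigr)$ appearing in $(\star\star)_{A_j}$, so $\underline{\eta}$ satisfies every condition $(\star\star)_{A_j}$ strictly, with the maximal possible slack on both sides.

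Second, I would invoke Lemma \ref{LEM:GOODOPEN}, which asserts that the conditions $(\star\star)_{A_j}$ define an open subset of $\mathcal{W}_C$. Since $\underline{\eta}$ lies in this open set, there is a non-empty open neighborhood $U$ of $\underline{\eta}$ inside $\mathcal{W}_C$ on which all the $(\star\star)_{A_j}$ continue to hold; explicitly, taking the perturbations $\epsilon_i = w_i' - w_i$ of Lemma \ref{LEM:GOODOPEN} small enough keeps each $\Delta_{\w'}(\OO_{A_j})$ inside its interval. By Theorem \ref{THM:DELTAGENERAL}, every polarization in $U$ is good, and $U$ is non-empty because the rational weights are dense near $\underline{\eta}$.

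There is essentially no serious obstacle here: the whole content is the pleasant observation that the canonical polarization sits at the exact center of each interval governing $(\star\star)_{A_j}$, which leaves room to move in every direction. The only point requiring a moment's care is that the family $\{A_j\}$ depends on the chosen marking and minimal paths; but since the identity $\Delta_{\underline{\eta}}(\OO_{A_j}) = \delta_{A_j}/2$ holds for any connected subcurve regardless of those choices, fixing any admissible $\cM$ and $\cP$ suffices, and the resulting open set $U$ does the job.
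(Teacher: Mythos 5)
Your proposal is correct and follows essentially the same route as the paper: exhibit the canonical polarization $\underline{\eta}$, verify via $\lambda_i=\delta_i/2$ that $\Delta_{\underline{\eta}}(\OO_{A_j})=\tfrac{1}{2}\delta_{A_j}$ sits inside each interval of $(\star\star)_{A_j}$, and then spread goodness to a neighborhood using Lemma \ref{LEM:GOODOPEN} together with Theorem \ref{THM:DELTAGENERAL}. Your additional remarks (the midpoint observation and the independence of the identity from the choice of $\cM$ and $\cP$) are accurate refinements of the same argument.
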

\begin{proof}
Since $C$ is stable we can consider the canonical polarization $\underline{\eta}$ (see \ref{polarizationeta}). From its definition it follows that $\Delta_{\underline{\eta}}(\OO_{A_j})= \frac{1}{2}\delta_{A_j}$, so $\underline{\eta}$ satisfies condition $(\star\star)_{A_j}$.
One can then conclude by using Lemma \ref{LEM:GOODOPEN}.
\end{proof}

\section{Some examples}
\label{SEC:EXAMPLE}

In this section we propose some examples of curves (not of compact type) which we have analyzed in order to study the relation between $\w$-stability of $\OO_C$ and the fact that $\w$ is a good polarization. We underline  that we  always obtain an equivalence between these two concept. So these  are motivating examples for Conjecture \ref{CONJ:OCwSTAB}.

\begin{example}
Let $(C, \w)$ be a polarized nodal curve with two smooth irreducible components and $\delta$ nodes.
\hfill\par\rm
Let  $C_1$ and $C_2$  be the components of $C$ and  $p_1,\dots,p_{\delta}$ the nodes. If $\delta= 1$ then $C$ is of compact type and the assertion follows from Theorem \ref{THM:COMPACTTYPE}, so we will assume $\delta \geq 2$. We fix $\cM=\{p_1\}$ so that $\cP=\{\gamma_1, \gamma_2\}$, where $\gamma_1$ has  support on the edge corresponding to $p_1$ and $\gamma_2$  is trivial. We have $A_1= C_1$ and $A_2=\cdots=A_\delta=\emptyset$. Let $E$ be any depth one sheaf on $C$, by Theorem \ref{THM:DELTAGENERAL} we have:
$$\Delta_{\w}(E)=a_1\left(\frac{1}{2}(1-\delta)+\lambda_1\right)+b_1\left(\frac{1}{2}(1+\delta)-\lambda_1\right)+\frac{1}{2}\sum_{j=2}^{\delta}(a_j+b_j)$$
where, as in Lemma \ref{LEM:POLGEN}, $\lambda_i = \Delta_{\w}(\OO_{C_i})$.
Assume that $\OO_C$ is $\w$-stable, then  for $i=1,2$, we have $0 < \lambda_i < \delta$, with $\lambda_1 + \lambda_2 = \delta$. If $(\star\star)_{C_1}$ holds, i.e. if
\begin{equation}
\frac{1}{2}(\delta-1) < \lambda_1 < \frac{1}{2}(1+\delta)
\end{equation}
then by Theorem \ref{THM:DELTAGENERAL} $\w$ is good. If $(\star\star)_{C_1}$ does not hold we have either $\lambda_1\in \left(0,\frac{1}{2}(\delta-1)\right)=I_1$ or $\lambda_1\in \left(\frac{1}{2}(1+\delta),1\right)=I_2$. In the second case we have $\lambda_2\in I_1$ so up to changing the label to $C_1$ and $C_2$ we can assume $\lambda_1\in I_1$. Then we have
$$ \Delta_{\w}(E) = (b_1-a_1)\left(\frac{1}{2}(\delta-1)-\lambda_1\right)+b_1+\frac{1}{2}\sum_{j=2}^{\delta}(a_j+b_j).$$
If $b_1\geq a_1$, then we are done. Assume now that $a_1\geq b_1$. By Lemma \ref{LEM:PATH}, we have $b_j-a_j=b_1-a_1$ for all $j$ so we can write
$$\Delta_{\w}(E)=(b_1-a_1)\left(\frac{1}{2}(\delta-1)-\lambda_1\right)+\frac{1}{2}\sum_{j=1}^{\delta}b_j+\frac{1}{2}(a_1-b_1)(\delta-1)= \lambda_1(a_1-b_1)+\sum_{j=1}^{\delta}b_j.$$
Hence, also in this case we have that $\w$ is good.
\end{example}

\begin{example} 
Let $(C,\w)$ be a polarized nodal curve which is a cycle with $3$ smooth irreducible components.  
\rm
\\
Let  $C_1$, $C_2$ and $C_3$ be the components and  let $p_1$, $p_2$ and  $p_3$ the nodes. The  dual graph is a triangle with edge $p_i$ opposite to the node $C_i$.
In this case $\cE=\cM$, $\cP=\{\gamma_1,\gamma_2,\gamma_3\}$,  where $\gamma_1$ and $\gamma_2$ have  support on the edge corresponding to $p_2$ and $p_1$ respectively and $\gamma_3$ is  trivial. Then $A_1=C_2$, $A_2=C_1$ and $A_3=\emptyset$.
Let $E$ be any depth one sheaf on $C$, by Theorem \ref{THM:DELTAGENERAL} we have:
$$\Delta_{\w}(E)=a_1\left(\lambda_2-\frac{1}{2}\right)+b_1\left(\frac{3}{2}-\lambda_2\right)+a_2\left(\lambda_1-\frac{1}{2}\right)+b_2\left(\frac{3}{2}-\lambda_1\right)+\frac{1}{2}(a_3+b_3)$$
where, as above $\lambda_i = \Delta_{\w}(\OO_{C_i})$.
Assume that  $\OO_C$ is $\w$-stable, we have $0<\lambda_i<2$ with $\lambda_1 + \lambda_2 + \lambda_3 = 3$. 
If conditions $(\star\star)_{A_i}$ hold, i.e. if $\frac{1}{2}<\lambda_1,\lambda_2<\frac{3}{2}$
we can conclude.  If $(\star\star)_{A_i}$  do not hold, one can prove that by exchanging the labels to $C_1,C_2$ and $C_3$ one can assume $0<\lambda_1<\frac{1}{2}$ and $\frac{1}{2}<\lambda_2< \frac{3}{2}$.
We can write
$$\Delta_{\w}(E) = a_1 \left(\lambda_2 - \frac{1}{2}\right)  + b_1 \left( \frac{3}{2} - \lambda_2 \right)  +  \left(\frac{1}{2}- \lambda_1\right)(b_2-a_2) + b_2 + \frac{1}{2}(a_3 + b_3).$$
The cycle in the dual graph yields the following relation 
$$b_2-a_2=b_1-a_1+b_3-a_3.$$
As in the previous example using the above relation, one can prove that $\Delta_{\w}(E)\geq 0$ and equality holds if and only if $E$ is locally free, i.e. that $\w$ is good.

\end{example}

\begin{bibdiv}
\begin{biblist}

\bib{A79}{article}{
author={Altman, A.S.},
author={Kleiman, S.L.},
title={Bertini theorems for hypersurface sections containing a subscheme},
Journal={Comm.Algebra},
Volume={8},
year={1979},
pages={775-790.}
}

\bib{ACG}{book}{
   author={Arbarello, E.},
   author={Cornalba, M.},
   author={Griffiths, P. A.},
   title={Geometry of algebraic curves. Volume II},
   series={Grundlehren der Mathematischen Wissenschaften [Fundamental
   Principles of Mathematical Sciences]},
   volume={268},
   note={With a contribution by Joseph Daniel Harris},
   publisher={Springer, Heidelberg},
   date={2011},
   pages={xxx+963},
   doi={10.1007/978-3-540-69392-5},
}

\bib{Bea}{article}{
   author={Beauville, A.},
   title={Theta functions, old and new},
   conference={
      title={Open problems and surveys of contemporary mathematics},
   },
   book={
      series={Surv. Mod. Math.},
      volume={6},
      publisher={Int. Press, Somerville, MA},
   },
   date={2013},
   pages={99--132},
}

\bib{Bho92}{article}{
   author={Bhosle, U.},
   title={Generalised parabolic bundles and applications to torsionfree sheaves on nodal curves},
   journal={Ark. Mat.},
   volume={30},
   date={1992},
   number={2},
   pages={187--215},
   issn={0004-2080},
   doi={10.1007/BF02384869},
}

\bib{BB12}{article}{
   author={Bolognesi, M.},
   author={Brivio, S.},
   title={Coherent systems and modular subavrieties of $\mathcal{SU}_C(r)$},
   journal={Internat. J. Math.},
   volume={23},
   date={2012},
   number={4},
   pages={1250037, 23},
   issn={0129-167X},
   doi={10.1142/S0129167X12500371}
}

\bib{Bra}{article}{
   author={Bradlow, S. B.},
   title={Coherent systems: a brief survey},
   note={With an appendix by H. Lange},
   conference={
      title={Moduli spaces and vector bundles},
   },
   book={
      series={London Math. Soc. Lecture Note Ser.},
      volume={359},
      publisher={Cambridge Univ. Press, Cambridge},
   },
   date={2009},
   pages={229--264},
}

\bib{BGMN}{article}{
   author={Bradlow, S. B.},
   author={Garc\'{\i}a-Prada, O.},
   author={Mu\~{n}oz, V.},
   author={Newstead, P. E.},
   title={Coherent systems and Brill-Noether theory},
   journal={Internat. J. Math.},
   volume={14},
   date={2003},
   number={7},
   pages={683--733},
   issn={0129-167X},
   review={\MR{2001263}},
   doi={10.1142/S0129167X03002009},
}

\bib{BF1}{article}{
author={Brivio, S.},
  author={Favale, F. F.},
  title={Genus 2 curves and generalized theta divisors},
  journal={Bull. Sci. Math.},
  volume={155},
  date={2019},
  pages={112--140},
  issn={0007-4497},
  doi={10.1016/j.bulsci.2019.05.002},
}

\bib{BF2}{article}{
  author={Brivio, S.},
  author={Favale, F. F.},
  title={On vector bundle over reducible curves with a node},
  date={2019},
  note={To appear in {\it Advances in Geometry}},
  doi={10.1515/advgeom-2020-0010},
}

\bib{BF3}{article}{
  author={Brivio, S.},
  author={Favale, F. F.},
  title={On Kernel Bundle over reducible curves with a node},
  date={2020},
  journal={\it International Journal of Mathematics},
  volume={31},
  date={2020},
  number={7},
  doi={10.1142/S0129167X20500548},
}

\bib{BF4}{article}{
  author={Brivio, S.},
  author={Favale, F. F.},
  title = {Coherent systems on curves of compact type},
  journal = {Journal of Geometry and Physics},
  volume = {158},
  pages = {103850},
  year = {2020},
  issn = {0393-0440},
  doi = {10.1016/j.geomphys.2020.103850},
}

\bib{Bri1}{article}{
  author={Brivio, S.},
  title={A note on theta divisors of stable bundles},
  journal={Rev. Mat. Iberoam.},
  volume={31},
  date={2015},
  number={2},
  pages={601--608},
  issn={0213-2230},
  doi={10.4171/RMI/846},
}

\bib{Bri2}{article}{
    author={Brivio, S.},
    title= {Families of vector bundles and linear systems of theta divisors},
    journal = {Internat. J. Math.},
    volume = {28},
    date = {2017},
    number = {6},
    pages = {1750039, 16}
}

\bib{BV}{article}{
    author={Brivio, S.},
    author={Verra, A.},
    title={Pl\"{u}cker forms and the theta map},
    journal={Amer. J. Math.},
    volume={134},
    date={2012},
    number={5},
    pages={1247--1273},
    issn={0002-9327},
    doi={10.1353/ajm.2012.0034},
}

\bib{Cap1}{article}{
   author={Caporaso, L.},
   title={A compactification of the universal Picard variety over the moduli
   space of stable curves},
   journal={J. Amer. Math. Soc.},
   volume={7},
   date={1994},
   number={3},
   pages={589--660},
   issn={0894-0347},
   doi={10.2307/2152786},
}

\bib{Cap}{article}{
   author={Caporaso, L.},
   title={Linear series on semistable curves},
   journal={Int. Math. Res. Not. IMRN},
   date={2011},
   number={13},
   pages={2921--2969},
   issn={1073-7928},
   doi={10.1093/imrn/rnq188},
}

\bib{E1}{article}{
   author={Esteves, E.},
   title={Compactifying the relative Jacobian over families of reduced
   curves},
   journal={Trans. Amer. Math. Soc.},
   volume={353},
   date={2001},
   number={8},
   pages={3045--3095},
   issn={0002-9947},
   doi={10.1090/S0002-9947-01-02746-5},
}

\bib{Gie}{article}{
   author={Gieseker, D.},
   title={A degeneration of the moduli space of stable bundles},
   journal={J. Differential Geom.},
   volume={19},
   date={1984},
   number={1},
   pages={173--206},
   issn={0022-040X},
}

\bib{GM}{article}{
   author={Gieseker, D.},
   author={Morrison, I.},
   title={Hilbert stability of rank-two bundles on curves},
   journal={J. Differential Geom.},
   volume={19},
   date={1984},
   number={1},
   pages={1--29},
   issn={0022-040X},
}

\bib{KN}{article}{
   author={King, A. D.},
   author={Newstead, P. E.},
   title={Moduli of Brill-Noether pairs on algebraic curves},
   journal={Internat. J. Math.},
   volume={6},
   date={1995},
   number={5},
   pages={733--748},
   issn={0129-167X},
   doi={10.1142/S0129167X95000316},
}

\bib{LP}{book}{
   author={Le Potier, J.},
   title={Lectures on vector bundles},
   series={Cambridge Studies in Advanced Mathematics},
   volume={54},
   note={Translated by A. Maciocia},
   publisher={Cambridge University Press, Cambridge},
   date={1997},
   pages={viii+251},
   isbn={0-521-48182-1},
}

\bib{M}{article}{
    author={Mumford,D.},
    title={On the equations defining abelian varieties},
    journal={Invent.Math.},
    volume={1},
    date={1966},
    pages={287-354}
}

\bib{OS}{article}{
   author={Oda, T.},
   author={Seshadri, C. S.},
   title={Compactifications of the generalized Jacobian variety},
   journal={Trans. Amer. Math. Soc.},
   volume={253},
   date={1979},
   pages={1--90},
   issn={0002-9947},
   doi={10.2307/1998186},
}  

\bib{Ses}{book}{
   author={Seshadri, C. S.},
   title={Fibr\'{e}s vectoriels sur les courbes alg\'{e}briques},
   language={French},
   series={Ast\'{e}risque},
   volume={96},
   note={Notes written by J.-M. Drezet from a course at the \'{E}cole Normale
   Sup\'{e}rieure, June 1980},
   publisher={Soci\'{e}t\'{e} Math\'{e}matique de France, Paris},
   date={1982},
   pages={209},
}

\bib{Tei91}{article}{
  author={Teixidor i Bigas, M.},
  title={Moduli spaces of (semi)stable vector bundles on tree-like curves},
  journal={Math. Ann.},
  volume={290},
  date={1991},
  number={2},
  pages={341--348},
  issn={0025-5831},
  doi={10.1007/BF01459249},
} 

\bib{Tei95}{article}{
   author={Teixidor i Bigas, M.},
   title={Moduli spaces of vector bundles on reducible curves},
   journal={Amer. J. Math.},
   volume={117},
   date={1995},
   number={1},
   pages={125--139},
   issn={0002-9327},
   doi={10.2307/2375038},
}

\bib{Tei}{article}{
    author={Teixidor i Bigas, M.},
    title={Vector bundles on reducible curves and applications},
    journal={Grassmannians, moduli spaces and vector bundles, Clay Math. Proceedings},
    volume={14},
    date={2011},
    pages={169--180},
    publisher={Amer. Math. Soc. Providence, RI},
}
    
\end{biblist}
\end{bibdiv}

\end{document}